\documentclass[12pt,a4paper]{article}
\usepackage[a4paper,margin=1in,portrait]{geometry}
\usepackage{amssymb}
\usepackage{amsmath}
\usepackage{amsthm}
\usepackage[T1]{fontenc}
\usepackage[utf8]{inputenc}
\usepackage{times}
\usepackage{mathabx} 
\usepackage{changepage}
\usepackage{parcolumns}
\usepackage[all]{xy}
\usepackage{extarrows}
\usepackage{mathrsfs}
\usepackage[usenames,dvipsnames]{xcolor}
\usepackage{enumitem}
\usepackage{cancel}
\usepackage{hyperref}
\usepackage{memhfixc}
\usepackage{mathtools}
\usepackage{mdframed}

\setlist[enumerate]{topsep=0pt,itemsep=-1ex,partopsep=1ex,parsep=1ex}




\DeclareMathAlphabet\mathbb{U}{msb}{m}{n}
\usepackage{eucal}
\newtheorem{theorem}{Theorem}[section]
\newtheorem{corollary}[theorem]{Corollary}
\newtheorem{proposition}[theorem]{Proposition}
\newtheorem{lemma}[theorem]{Lemma}

\newcommand{\detail}[1]{}
\theoremstyle{definition}
\newtheorem{definition}[theorem]{Definition}
\theoremstyle{remark}
\newtheorem{remark}[theorem]{Remark}

\newcommand{\Gm}[2]{{\cE^{#2}_{\cM}(#1)}}
\newcommand{\Gn}[2]{{\cN^{#2}(#1)}}


\newcommand{\GGm}[3]{{\cE^{#3}_{\cM}[#1,#2]}}

\newcommand{\GMFq}[2]{{\cG^\Delta[#1,#2]}}
\newcommand{\GMFb}[2]{{\cE^\Delta[#1,#2]}}
\newcommand{\GMFm}[2]{{\cE^\Delta_{\cM}[#1,#2]}}

\newcommand{\GHOMq}[2]{{\Hom_\cG^{\Delta}[#1,#2]}}
\newcommand{\GHOMb}[2]{{\Hom_\cE^{\Delta}[#1,#2]}}
\newcommand{\GHOMm}[2]{{\Hom_\cM^{\Delta}[#1,#2]}}

\newcommand{\GHYBq}[2]{{\cG^{\Delta,h}[#1,#2]}}
\newcommand{\GHYBb}[2]{{\cE^{\Delta,h}[#1,#2]}}
\newcommand{\GHYBm}[2]{{\cE^{\Delta,h}_{\cM}[#1,#2]}}




\newcommand{\fS}{\mathfrak{S}}





\newcommand{\Gvb}[2]{{{#1}^#2}}
\newcommand{\Gvm}[2]{{{#1}_\cM^#2}}
\newcommand{\Gvn}[2]{{{#1}_\cN^#2}}
\newcommand{\Gvq}[2]{{\widetilde{#1}^#2}}


\newcommand{\GPb}[1]{{#1^\Delta}}
\newcommand{\GPc}[1]{{#1^\Delta_c}}
\newcommand{\GPm}[1]{{#1^\Delta_{c,\cM}}}
\newcommand{\GPq}[1]{{\widetilde{#1}^\Delta_c}}
\newcommand{\GVBb}[1]{{#1^{\Delta}}}
\newcommand{\GVBm}[1]{{#1^{\Delta}_{vb,\cM}}}
\newcommand{\GVBq}[1]{{\widetilde{#1}^{\Delta}_{vb}}}





\newcommand{\Hom}{{\operatorname{Hom}}}

\newcommand{\csn}[1]{{{\operatorname{csn}}}(#1)} 
\newcommand{\ud}{\mathrm{d}} 
\newcommand{\fX}{\mathfrak{X}} 
\newcommand{\cD}{\mathcal{D}}


\newcommand{\TOv}[1]{{ S(#1)}} 
\newcommand{\TOvz}[1]{{ S^0(#1)}} 

\newcommand{\pd}{\partial}
\newcommand{\e}{\varepsilon}


\newcommand{\bC}{\mathbb{C}}
\newcommand{\bE}{\mathbb{E}}
\newcommand{\bF}{\mathbb{F}}

\newcommand{\bK}{\mathbb{K}}
\newcommand{\bN}{\mathbb{N}}
\newcommand{\bR}{\mathbb{R}}

\newcommand{\Riem}{\mathrm{Riem}}


\newcommand{\cA}{\mathcal{A}}
\newcommand{\cB}{\mathcal{B}}

\newcommand{\cE}{\mathcal{E}}

\newcommand{\cG}{\mathcal{G}}

\newcommand{\cM}{\mathcal{M}}
\newcommand{\cN}{\mathcal{N}}


\newcommand{\csub}{\subset \subset}
\newcommand{\coleq}{\mathrel{\mathop:}=}
\newcommand{\eqcol}{=\mathrel{\mathop:}}

\DeclareMathOperator{\id}{id}

\DeclareMathOperator{\supp}{supp}

\DeclareMathOperator{\pr}{pr}

\newcommand{\tang}{\mathrm{T}}

\newcommand{\Lin}{\mathrm{L}}

\providecommand{\norm}[1]{\left\lVert#1\right\rVert}
\providecommand{\abso}[1]{\left\lvert#1\right\rvert}




  \makeatletter
    \let\OLD@otimes\otimes
    \def\opotimes{\@ifnextchar_\otimesWSB\otimesNSB}
    \def\otimesWSB_#1{\@ifnextchar^{\otimesWSBWXP_{#1}}{\otimesWSBNXP_{#1}}}
    \def\otimesWSBNXP_#1{\mathbin{\operatorname*{\OLD@otimes}_{#1}}}
    \def\otimesWSBWXP_#1^#2{\mathbin{\operatorname*{\OLD@otimes}_{#1}^{#2}}}
    \def\otimesNSB{\@ifnextchar^\otimesWXP\OLD@otimes}
    \def\otimesWXP^#1{\@ifnextchar_{\otimesWXPWSB^{#1}}{\otimesWXPNSB^{#1}}}
    \def\otimesWXPNSB^#1{\mathbin{\operatorname*{\OLD@otimes}^{#1}}}
    \def\otimesWXPWSB^#1_#2{\mathbin{\operatorname*{\OLD@otimes}^{#1}_{#2}}}
  \makeatother

\newcommand{\cL}{{\mathscr{L}}}

\newcommand{\rSO}{\rho^{SO}} 


\newcommand{\csna}{\mathfrak{p}}

\newcommand{\SK}[1]{{\mathrm{SK}(#1)}}

\newcommand{\SO}[1]{{\mathrm{SO}(#1)}}

\newcommand{\VSO}[1]{{\mathrm{VSO}(#1)}}


\setlength{\parindent}{0pt}
\setlength{\parskip}{1ex plus 0.5ex minus 0.5ex}

\def\mytitle{Nonlinear generalized sections and\\
vector bundle homomorphisms in\\
Colombeau spaces of generalized functions}

\begin{document}
\author{E.~A.~Nigsch}
\title{\mytitle}

\maketitle

\begin{abstract}
 We define and characterize spaces of manifold-valued generalized functions and generalized vector bundle homomorphisms in the setting of the full diffeomorphism-invariant vector-valued Colombeau algebra. Furthermore, we establish point value characterizations for these spaces.
\end{abstract}

\section{Introduction}

Lately, the theory of nonlinear generalized functions in the sense of Colombeau (\cite{Biagioni,colmult, GKOS, zbMATH01226424, MOBook}) is experiencing a fundamental restructuring of its geometrical variant. After the very early split between so-called full and special (or simplified) algebras, the special algebras -- due to their simpler structure -- very quickly were extended to geometrical contexts (\cite{MKnonsmooth,KSVintrins,Kgenmf,GKOS, GroupAnalysis, genrel}), with applications in particular in general relativity or Lie group analysis of differential equations.

The inherent limitations of special Colombeau algebras (i.e., one can have neither diffeomorphism invariance nor a canonical embedding of distributions) always fueled the desire for a corresponding development in the full branch of Colombeau theory. There, the geometrization of the theory went from $\cG^d$ (the diffeomorphism invariant algebra on open subsets of Euclidean space, \cite{found}) over $\hat\cG$ (its global version on manifolds, \cite{global}) to $\hat\cG^r_s$ (the tensorial version, \cite{global2}). At this point development stalled, essentially due to technical ballast which was dragged along since the very inception of the theory. The impossibility to obtain the sheaf property and a sensible notion of covariant derivative in the context of $\hat\cG^r_s$ made it necessary to reconsider the foundations of the theory -- this led to the development of the functional analytic approach in \cite{papernew}, incorporating earlier ideas of \cite{1177.46033,zbMATH06162234}.

This approach made it possible to obtain a global theory of spaces $\cG^\Delta$ of nonlinear generalized functions and sections of vector bundles, including a sensible notion of generalized covariant derivative. It is noteworthy that at the same time this did not inflict more technicalities upon the theory but, quite to the contrary, many constructions became conceptually easier.

However, the price one has to pay for ever wider generalizations of the theory is that one has to lift previously obtained results to the new context. The aim of this article is to establish the following constructions from the special Colombeau algebra (cf.~\cite{Kgenmf,genpseudo}) and results from \cite{mfval} in the setting of the full algebra $\cG^\Delta$:
\begin{enumerate}[label=(\roman*)]
 \item the space $\cG[X,Y]$ of manifold-valued generalized mappings (Section \ref{sec4}),
 \item the space $\Hom_\cG[E,F]$ of generalized vector bundle homomorphisms (Section \ref{sec5}),
 \item the space $\cG^h[M,F]$ of hybrid generalized mappings (Section \ref{sec6}),
 \item point value characterizations in the spaces $\cG^\Delta(E)$, $\cG[X,Y]$, $\Hom_\cG[E,F]$ and $\cG^h[M, F]$ (Section \ref{sec9}).
\end{enumerate}

Note that although already \cite{mfval} was dealing with manifold-valued generalized functions in full Colombeau spaces, this was in the (too restrictive) setting of $\hat\cG$, where not even an intrinsic point value characterization is possible.

\section{Preliminaries and Notation}

We assume all manifolds to be Hausdorff, second-countable and finite dimensional. $\cA_M$ denotes an arbitrary but fixed atlas of the manifold $M$. For a vector bundle $E$ over $M$, $\pi_E \colon E \to M$ is the projection onto the base manifold and $\fS_E$ denotes an arbitrary but fixed trivializing covering of $E$. $E_x$ denotes the fiber of $E$ over $x \in M$, $\Hom(E,F)$ the space of vector bundle homomorphisms from $E$ to another vector bundle $F$ and $\Gamma(E)$ the space of smooth sections of $E$. $\cD'(M,E)$ is the space of $E$-valued distributions on $M$ (\cite[Definition 3.1.4, p.~231]{GKOS}).

The field $\bK$ is fixed to be either $\bR$ or $\bC$. $A \csub B$ means that $A$ is a compact subset of the interior $B^\circ$ of $B$, we set $I \coleq (0,1]$. $\Riem(M)$ denotes the set of Riemannian metrics on the manifold $M$; for $h \in \Riem(M)$, $d_h$ denotes the corresponding Riemannian distance on $M$ and $B^h_r(p)$ the metric ball of radius $r$ with center $p\in M$ with respect to this distance.

Given any locally convex spaces $\bE_1, \dotsc, \bE_n$ and $\bF$ we denote by $\Lin(\bE_1, \dotsc, \bE_n; \bF)$ the space of all bounded $n$-linear mappings from $\bE_1 \times \dotsc \times \bE_n$ into $\bF$ and by $\cL(\bE_1, \dotsc, \bE_n; \bF)$ the space of continuous such mappings. The set of continuous seminorms of $\bE$ is denoted by $\csn{\bE}$.

$C^\infty(M)$ denotes the space of $\bK$-valued smooth functions on $M$ and $C^\infty_0(M)$ its subspace formed by functions with compact support.

As is customary for diffeomorphism invariant algebras of generalized functions, for a notion of smooth functions between infinite dimensional locally convex spaces we employ the setting of \emph{convenient calculus} (\cite{KM,Froe}). Among its features is the exponential law $C^\infty(\bE_1 \times \bE_2, \bF) \cong C^\infty(\bE_1, C^\infty(\bE_2, \bF))$; notationally, this is expressed as $f(x,y) = f^\vee(x)(y)$ and $g^\wedge(x,y) = g(x)(y)$.

Let $\bE$ be a locally convex space. We recall from \cite[27.2, p.~264]{KM} that given a convenient locally convex space $\bE$ and manifolds $M,N$, a mapping $f \colon \bE \times M \to N$ is called \emph{smooth} if for each $(\lambda, p) \in \bE \times M$ and each chart $(V, \psi)$ on $N$ with $f(\lambda,p) \in V$ there exists a $c^\infty$-open subset $W \subseteq \bE$ and a chart $(U, \varphi)$ on $M$ such that $(\lambda, p) \in W \times U$, $f(W \times U) \subseteq V$ and $\psi \circ f \circ (\id \times \varphi^{-1}) \in C^\infty(W \times \varphi(U), \psi(V))$. The space of all such mappings is denoted by $C^\infty(\bE \times M, N)$.

We will constantly work with differentials of local representatives of such maps as follows: given $f \in C^\infty(\bE \times M, N)$, charts $(U,\varphi)$ of $M$ and $(V, \psi)$ of $N$ as well as a pair $(\lambda, p) \in \bE \times U$ such that $f(\lambda, p ) \in V$, there exists an open subset $U_0 \subseteq U$ and a $c^\infty$-open neighborhood $W$ of $\lambda$ such that $f(W \times U_0) \subseteq V$ and
\[ \psi \circ f \circ (\id \times \varphi^{-1}) \in C^\infty(W \times \varphi(U_0), \psi(V)). \]
This mapping has partial differentials (\cite[Section 2.2.1, p.~111]{GKOS})
\begin{align*}
\ud_1^k \ud_2^l (\psi \circ f \circ (\id \times \varphi^{-1})) & \in C^\infty(W \times \varphi(U_0), \Lin(\bE, \dotsc, \bE, \bR^m, \dotsc, \bR^m; \bR^n)) \\
& \cong C^\infty(W \times \varphi(U_0), \Lin ( \bE, \dotsc, \bE; \Lin(\bR^m, \dotsc, \bR^m; \bR^n)))
\end{align*}
where $\dim M = m$, $\dim N = n$, $\bE$ appears $k$ times, $\bR^m$ $l$ times, and the isomorphism is given by the exponential law \cite[5.2, p.~53]{KM}. Hence, for any $(\lambda,p) \in W \times U_0$ and $\lambda_1, \dotsc, \lambda_k \in \bE$ we have
\[ \ud_1^k \ud_2^l ( \psi \circ f \circ (\id \times \varphi^{-1}))(\lambda, \varphi(p)) ( \lambda_1, \dotsc, \lambda_k ) \in \Lin ( \bR^m, \dotsc, \bR^m ; \bR^n). \]

Finally, we recall the following lemmas from \cite[Lemma 3.2.5, p.~281]{GKOS} and \cite[Lemma 2.5, p.~36]{Kgenmf}:

\begin{lemma}\label{riemequiv}
 Let $h_i$ ($i=1,2$) be Riemannian metrics inducing the Riemannian distances $d_i$ on $M$. Then for $K,K' \csub M$ there exists $C>0$ such that $d_2(p,q) \le C d_1(p,q)$ for all $p \in K$, $q \in K'$.
\end{lemma}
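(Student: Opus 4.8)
The plan is to split the estimate into a regime where $p,q$ are $d_1$-close and one where they are $d_1$-far apart. For the setup I would fix a relatively compact open neighbourhood $U$ of $K \cup K'$, put $L \coleq \overline U$ (compact), and set $\delta \coleq \dist_{h_1}(K \cup K', M \setminus U)$, which is $>0$ because $K \cup K'$ is compact and $M \setminus U$ is closed and disjoint from it (if $M = U$ then $M$ is compact and one takes $L \coleq M$ directly).

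The local comparison is the heart of the matter. Since $L$ is compact, the set of $h_1$-unit tangent vectors with base point in $L$ is compact, so the continuous function $v \mapsto h_2(v,v)$ attains a finite maximum $C_1^2$ there; by homogeneity of degree two this yields $h_2(v,v) \le C_1^2\, h_1(v,v)$ for every tangent vector $v$ with base point in $L$, and therefore $\ell_2(\gamma) \le C_1\, \ell_1(\gamma)$ for every piecewise smooth curve $\gamma$ contained in $L$, where $\ell_i$ denotes $h_i$-length. Now take $p \in K$, $q \in K'$ with $d_1(p,q) < \delta$ and $\eps > 0$ with $d_1(p,q) + \eps < \delta$, and pick a piecewise smooth curve $\gamma$ from $p$ to $q$ with $\ell_1(\gamma) < d_1(p,q) + \eps$. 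Such a $\gamma$ cannot leave $U$: otherwise it would reach a point of $M \setminus U$, necessarily at $h_1$-distance $\ge \delta$ from $p$, forcing $\ell_1(\gamma) \ge \delta$. Hence $\gamma \subseteq L$, so $d_2(p,q) \le \ell_2(\gamma) \le C_1\,\ell_1(\gamma) < C_1(d_1(p,q)+\eps)$, and letting $\eps \to 0$ gives $d_2(p,q) \le C_1\, d_1(p,q)$ whenever $d_1(p,q) < \delta$.

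For the far regime, we may assume $M$ connected, since for $p,q$ in different components both $d_1(p,q)$ and $d_2(p,q)$ are $+\infty$ and nothing is to be shown (and a disconnected $M$ reduces to finitely many connected pieces of $K$ and $K'$). Then $d_2$ is continuous and finite on the compact set $K \times K'$, hence bounded there by some $D$, so $d_1(p,q) \ge \delta$ forces $d_2(p,q) \le D \le (D/\delta)\, d_1(p,q)$. Setting $C \coleq \max(C_1, D/\delta)$ completes the proof.

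The step I expect to be the main obstacle is the confinement in the $d_1$-close regime: on a non-compact $M$ the metrics $h_1$ and $h_2$ are generally not globally comparable, and neither is assumed complete, so there is no a priori compact set containing all near-minimizing $d_1$-curves between $K$ and $K'$. The point is to obtain such confinement purely from the smallness of $d_1(p,q)$ relative to the $h_1$-distance of $K \cup K'$ to $M \setminus U$; once this is secured, the remaining arguments are routine compactness, continuity and homogeneity considerations.
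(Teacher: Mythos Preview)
Your argument is correct. The paper itself does not prove this lemma; it merely records it as a quotation from \cite[Lemma 3.2.5, p.~281]{GKOS}, so there is no in-paper proof to compare against. The two-regime split you use (local length comparison via compactness of the $h_1$-unit sphere bundle over a compact neighbourhood, combined with a crude bound from boundedness of $d_2$ on $K\times K'$ when $d_1$ is bounded below) is precisely the standard route, and your confinement step for near-minimizing curves is handled cleanly via the gap $\delta = \dist_{h_1}(K\cup K', M\setminus U)$. The reduction of the disconnected case is also fine: compactness of $K$ and $K'$ means only finitely many components are involved, and for points in different components the inequality is vacuous.
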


\begin{lemma}\label{diffestimate}Let $\Omega \subseteq \bR^m$ and $\Omega' \subseteq \bR^n$ be open, $f \colon \Omega \to \Omega'$ continuously differentiable and suppose that $K \csub \Omega$. Then there exists $C>0$ such that $\norm{f(x) - f(y)} \le C \norm{x-y}$ for all $x,y \in K$. $C$ can be chosen as $C_1 \cdot \sup \{\, \norm{f(z)} + \norm { Df(z)} : z \in L \,\}$ where $L$ is any fixed compact neighborhood of $K$ in $\Omega$ and $C_1$ only depends on $L$.
\end{lemma}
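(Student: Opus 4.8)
The plan is to split the Lipschitz estimate into a ``nearby'' regime, handled by the mean value inequality along straight line segments, and a ``far apart'' regime, handled by the triangle inequality together with the boundedness of $f$ on the compact set $L$. Since $K \csub \Omega$ and $L$ is a compact neighbourhood of $K$ in $\Omega$ (such an $L$ exists by local compactness of $\bR^m$), we have $K \subseteq L^\circ$, so $\delta \coleq \dist(K, \bR^m \setminus L^\circ)$ is strictly positive; moreover $M \coleq \sup\{\,\norm{f(z)} + \norm{Df(z)} : z \in L\,\} < \infty$ since $f$ and $Df$ are continuous and $L$ is compact. I then show the estimate holds with $C \coleq C_1 M$, where $C_1 \coleq \max\{1, 2/\delta\}$ does not involve $f$.

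For $x, y \in K$ with $\norm{x - y} < \delta$, the segment $\{x + t(y - x) : t \in [0,1]\}$ lies in the open ball of radius $\delta$ about $x$, hence in $L^\circ \subseteq \Omega$, where $f$ is continuously differentiable; applying the fundamental theorem of calculus coordinatewise gives $f(x) - f(y) = \int_0^1 Df(x + t(y-x))(x - y)\,\ud t$, whence $\norm{f(x) - f(y)} \le \bigl(\sup_{z \in L} \norm{Df(z)}\bigr)\,\norm{x - y} \le M \norm{x - y}$. For $x, y \in K$ with $\norm{x - y} \ge \delta$ one estimates directly $\norm{f(x) - f(y)} \le \norm{f(x)} + \norm{f(y)} \le 2M \le \tfrac{2}{\delta} M \norm{x - y}$. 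Combining the two cases yields $\norm{f(x) - f(y)} \le C_1 M \norm{x - y}$ for all $x, y \in K$, which is the assertion.

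The argument is essentially routine; the one point requiring care is that $K$ is not assumed convex, so the mean value inequality cannot be applied to a pair of points of $K$ whose connecting segment leaves $K$. This is precisely why one passes to the compact neighbourhood $L$ and chooses the threshold $\delta$ small enough that the relevant segments remain inside $L^\circ$, and why $C_1$ ends up depending only on the way $K$ sits inside $L$ and not on $f$. The sole non-elementary ingredient is the mean value inequality for vector-valued $C^1$ maps, which is standard.
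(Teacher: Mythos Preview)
Your argument is correct. The paper does not actually supply its own proof of this lemma; it merely recalls the statement from \cite[Lemma~2.5]{Kgenmf}, so there is no in-paper proof to compare against. Your two-case split (mean value inequality on segments that stay inside $L^\circ$, crude triangle-inequality bound when the points are at least $\delta$ apart) is exactly the standard proof, and your constant $C_1 = \max\{1, 2/\delta\}$ with $\delta = \dist(K, \bR^m \setminus L^\circ)$ depends only on the geometry of $K$ inside $L$, as required.
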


Moreover, we will use the following result from Riemannian geometry which is easily obtained using normal coordinates:

\begin{lemma}\label{kartenschaetz}
Given any point $p_0$ of a manifold $M$ there exists a Riemannian metric $g$ on $M$ and a chart $(U, \varphi)$ containing $p_0$ such that $d_g(p,q) = \norm{ \varphi(p) - \varphi(q) }$ for all $p,q \in V$. 
\end{lemma}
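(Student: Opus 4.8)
The plan is to construct the Riemannian metric $g$ and the chart $(U,\varphi)$ directly from normal coordinates centered at $p_0$. Start with any auxiliary Riemannian metric $g_0$ on $M$, and let $(U_0, \psi)$ be a normal coordinate chart centered at $p_0$ with respect to $g_0$, so that $\psi(p_0) = 0$ and radial lines through the origin in $\psi(U_0)$ correspond to $g_0$-geodesics through $p_0$. We may shrink $U_0$ so that $\psi(U_0)$ is a ball $B_\rho(0) \subseteq \bR^m$. The point is \emph{not} that $g_0$ itself has the desired property — in general $d_{g_0}(p,q) \ne \norm{\psi(p) - \psi(q)}$ away from the center — but rather that we will \emph{transplant} the flat Euclidean metric onto $U_0$ via $\psi$ and then extend it to all of $M$.

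Concretely, first fix a chart $(U,\varphi)$ with $\overline{U} \subseteq U_0$, $p_0 \in U$, and $\varphi = \psi|_U$; choose $U$ so that $\varphi(U)$ is a convex (e.g.\ a ball) subset of $\bR^m$. On $U$, define $g := \varphi^* g_{\mathrm{eucl}}$, the pullback of the standard Euclidean metric under $\varphi$. Then for $p,q \in U$, since $\varphi(U)$ is convex, the straight segment from $\varphi(p)$ to $\varphi(q)$ stays in $\varphi(U)$ and is the $g$-minimizing geodesic, whence the intrinsic $g$-distance \emph{within $U$} equals $\norm{\varphi(p) - \varphi(q)}$. The remaining issue is that $d_g(p,q)$ is defined using paths in all of $M$, so I must ensure no shortcut leaves $U$; this is arranged by a further shrinking: pick $U$ small enough (a small coordinate ball $\varphi(U) = B_r(0)$) and then extend $g$ from $U$ to a global Riemannian metric on $M$ in such a way that paths leaving $\varphi^{-1}(B_r(0))$ and returning are forced to be long — for instance, using a partition of unity, extend $g$ so that it dominates a large multiple of $g_0$ outside a slightly larger ball $B_{r'}(0)$ with $r < r' < \rho$. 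Then any path from $p$ to $q$ (both in $U$) that exits $\varphi^{-1}(B_{r'}(0))$ has $g$-length at least $2(r' - r) \cdot (\text{large constant})$, which we make exceed $\mathrm{diam}_g(U) = 2r$; hence the minimizing path stays inside and $d_g(p,q) = \norm{\varphi(p) - \varphi(q)}$.

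\textbf{The main obstacle} is precisely this last localization: guaranteeing that the global Riemannian distance, not merely the intrinsic distance of the chart domain, realizes the Euclidean formula. The clean way to handle it is to choose the scaling factor in the extension of $g$ large enough relative to the ratio $r'/r$; since $r, r'$ are fixed once $U$ is chosen and the extension is under our control, there is no circularity. An alternative, perhaps cleaner, route avoids the extension entirely: take $U$ to be a \emph{$g_0$-geodesically convex} normal ball $B^{g_0}_\delta(p_0)$ small enough that it is also ``strongly convex'' in the sense that for $p, q \in U$ the minimizing $g_0$-geodesic is unique and lies in $U$; one still transplants the Euclidean metric but now the convexity of the normal ball, together with the fact that on a small enough such ball the $g$-geodesics from $p$ to $q$ cannot escape (as escaping would cost more than $\mathrm{diam}$), gives the claim without needing to describe the global extension explicitly — one simply invokes that any Riemannian metric on $M$ extending $g|_U$ and blowing up near $\partial U$ exists by a bump-function argument. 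Either way, the heart of the matter is the standard fact that on a convex Euclidean region the Riemannian distance of the flat metric is the straight-line distance, combined with a size estimate forcing geodesics to stay put.
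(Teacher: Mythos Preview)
Your proposal is correct and matches the paper's approach: the paper offers no proof beyond the one-line remark that the result ``is easily obtained using normal coordinates,'' and your argument --- pull back the Euclidean metric through a chart, then extend to $M$ so that no path leaving the chart domain can be a shortcut --- is the standard way to realize that hint. As you yourself observe, the normal-coordinate step is inessential (any chart with convex image works, since you replace the metric anyway); the localization is cleanest if you keep $g=\varphi^* g_{\mathrm{eucl}}$ on all of $\varphi^{-1}(B_\rho(0))$, set $V=\varphi^{-1}(B_{\rho/3}(0))$, and extend $g$ to $M$ arbitrarily, since any path from $p,q\in V$ that exits $\varphi^{-1}(B_\rho(0))$ must traverse the Euclidean annulus twice for $g$-length at least $4\rho/3 > 2\rho/3 \ge \lVert\varphi(p)-\varphi(q)\rVert$, so no large scaling constant is needed.
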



\section{Nonlinear generalized sections: a review}\label{sec3}

For the convenience of the reader we recall the basic definitions of the algebra $\cG^\Delta(E)$ of nonlinear generalized sections of a vector bundle $E$ from \cite{bigone}.

\begin{definition}
 Let $\Delta$ be a (possibly empty) finite set of vector bundles over arbitrary manifolds and $E \to M$ a vector bundle. Then we set $\VSO{G} \coleq \cL(\cD'(N, G); \Gamma(G))$ for any element $G \to N$ of $\Delta$, $\VSO{\Delta} \coleq \prod_{G \in \Delta} \VSO{G}$, $\cE^\Delta(E) \coleq C^\infty(\VSO{\Delta}, \Gamma(E))$, $\cE^\Delta(M) \coleq \cE^\Delta(M \times \bK)$ and $\cE^\emptyset(E) = \Gamma(E)$.
\end{definition}

\begin{definition}A \emph{test object} on a vector bundle $E \to M$ is a net
 $(\Phi_\e)_\e \in \VSO{E}^I$ satisfying the following conditions.
\begin{enumerate}
 \item[(VSO1)] ($\forall g \in \Riem(M)$) ($\forall x_0 \in M$) ($\exists$ an open neighborhood $V$ of $x_0$) ($\forall r>0$) ($\exists \e_0 \in I$) ($\forall x \in V$) ($\forall \e \le \e_0$) ($\forall u \in \cD'(M, E)$): $(u|_{B^g_{r}(x)} = 0 \Rightarrow \Phi_\e(u)(x) = 0)$.
\item[(VSO2)] $\Phi_\e \to \id$ in $\cL(\cD'(M, E); \cD'(M, E))$.
\item[(VSO3)] ($\forall \csna \in \csn{\VSO{E}}$) ($\exists N \in \bN$): $\csna(\Phi_\e) = O(\e^{-N})$.
 \item[(VSO4)] ($\forall \csna \in \csn{\cL(\Gamma(E); \Gamma(E))}$) ($\forall m \in \bN$): $\csna(\Phi_\e|_{\Gamma(E)} - \id) = O(\e^m)$.
\end{enumerate}
A \emph{0-test object} is a sequence $(\Phi_\e)_\e \in \VSO{E}^I$ satisfying (VSO1), (VSO3) and the following conditions.
\begin{enumerate}
\item[(VSO2')] $\Phi_\e \to 0$ in $\cL(\cD'(M, E); \cD'(M, E))$.
\item[(VSO4')] ($\forall \csna \in \csn{\cL(\Gamma(E); \Gamma(E))}$) ($\forall m \in \bN$): $\csna(\Phi_\e|_{\Gamma(E)}) = O(\e^m)$.
\end{enumerate}
We denote by $\TOv{E}$ the set of all test objects and by $\TOvz{E}$ the set of all $0$-test objects. The notations $\TOv{\Delta}$ and $\TOvz{\Delta}$ mean families of corresponding test objects indexed by $\Delta$, e.g.,
\[ \TOv{\Delta} \coleq \prod_{G \in \Delta} \TOv{G} = \{\, ( ( \Phi_{G,\e} )_\e )_G : (\Phi_{G,\e})_\e \in \TOv{G}\ \forall G \in \Delta\,\}. \]
\end{definition}

\begin{definition}\label{scalmodneg}
 $R \in \cE^\Delta(E)$ is called \emph{moderate} if ($\forall \csna \in \csn{\Gamma(E)}$) ($\forall k \in \bN_0$) ($\forall (\Phi_{\e})_\e \in \TOv{\Delta}$, $(\Psi_{1,\e})_\e$, $\dotsc$, $(\Psi_{k,\e})_\e \in \TOvz{\Delta}$) ($\exists N \in \bN$) such that we have
\[ \csna (( \ud^k R)(\Phi_\e)(\Psi_{1,\e},\dotsc,\Psi_{k,\e})) = O(\e^{-N}). \]
The space of moderate elements of $\cE^\Delta(E)$ is denoted by $\cE^\Delta_\cM(E)$.
Similarly, $R$ is called \emph{negligible} if and only if ($\forall \csna \in \csn{\Gamma(E)}$) ($\forall k \in \bN_0$) ($\forall (\Phi_{\e})_\e \in \TOv{\Delta}$, $(\Psi_{1,\e})_\e$, $\dotsc$, $(\Psi_{k,\e})_\e \in \TOvz{\Delta}$) ($\forall m \in \bN$) we have
\[ \csna (( \ud^k R)(\Phi_\e)(\Psi_{1,\e},\dotsc,\Psi_{k,\e})) = O(\e^m). \]
The space of negligible elements of $\cE^\Delta(E)$ is denoted by $\cN^\Delta(E)$.
\end{definition}

The following characterization (\cite[Theorem 6.3, p.~204]{bigone}) will be useful:

\begin{theorem}\label{nonegder}$R \in \Gm{E}{\Delta}$ is negligible if ($\forall K \csub M$) ($\forall m \in \bN$) ($\forall (\Phi_{\e})_{\e} \in \TOv{\Delta}$):
$\sup_{x \in K} \norm { R ( \Phi_{\e} ) } = O(\e^m)$, where $\norm{\cdot}$ is the norm on $\Gamma(E)$ induced by any Riemannian metric on $E$.
\end{theorem}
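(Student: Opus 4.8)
The plan is to deduce the full negligibility condition of Definition \ref{scalmodneg} from the hypothesis in two stages: first upgrade the $C^0$-type estimate to all seminorms of $\Gamma(E)$ while keeping the order $k=0$, and then remove the derivatives $\ud^k$ by a finite-difference argument. Throughout I would use the elementary stability of test objects under perturbation by $0$-test objects: comparing (VSO1)--(VSO4) with (VSO1),(VSO2'),(VSO3),(VSO4'), one sees that $\TOvz\Delta$ is closed under addition and scalar multiplication (in particular $\e^M(\Psi_\e)_\e\in\TOvz\Delta$), and that $(\Phi_\e+\Psi_\e)_\e\in\TOv\Delta$ whenever $(\Phi_\e)_\e\in\TOv\Delta$ and $(\Psi_\e)_\e\in\TOvz\Delta$. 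Hence for fixed scalars $t_1,\dots,t_k$ the net $(\Phi_\e+\sum_i t_i\Psi_{i,\e})_\e$ is again a test object.

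\emph{Step 1 (all seminorms at order $0$).} Fix $(\Phi_\e)_\e\in\TOv\Delta$ and put $s_\e\coleq R(\Phi_\e)\in\Gamma(E)$. By hypothesis $\sup_{x\in K}\norm{s_\e}=O(\e^m)$ for every $K\csub M$ and every $m$, while moderateness of $R$ (Definition \ref{scalmodneg} with $k=0$) yields, for each $\csna\in\csn{\Gamma(E)}$, a bound $\csna(s_\e)=O(\e^{-N})$. Since the seminorms of $\Gamma(E)$ are generated by suprema over compacta of the jets of $s_\e$ in local trivialisations, I would apply a Landau--Kolmogorov interpolation inequality chart by chart (patching over a finite subcover of each compact set) to squeeze every intermediate-order jet-seminorm between the $C^0$-norm and a fixed high-order one. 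As the $C^0$-norm decays faster than any power while the high-order seminorm grows at most polynomially, every intermediate seminorm decays faster than any power; thus $\csna(R(\Phi_\e))=O(\e^m)$ for all $\csna\in\csn{\Gamma(E)}$, all $m$, and all test objects.

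\emph{Step 2 (removal of the derivatives).} Fix $(\Phi_\e)_\e\in\TOv\Delta$, $(\Psi_{i,\e})_\e\in\TOvz\Delta$ and $\csna\in\csn{\Gamma(E)}$, and set $F_\e(t_1,\dots,t_k)\coleq R(\Phi_\e+\sum_{i=1}^k t_i\Psi_{i,\e})$, so that $(\ud^kR)(\Phi_\e)(\Psi_{1,\e},\dots,\Psi_{k,\e})=\partial_{t_1}\cdots\partial_{t_k}F_\e(0)$. For a step size $h>0$ I would use the mixed $k$-th order forward-difference identity
\[ \partial_{t_1}\cdots\partial_{t_k}F_\e(0)=h^{-k}\sum_{\sigma\in\{0,1\}^k}(-1)^{k-|\sigma|}F_\e(h\sigma)-r_\e(h), \]
whose remainder $r_\e(h)$ is of order $O(h)$ times an average of $\ud^{k+1}R$ along the segment from $\Phi_\e$ to $\Phi_\e+h\sum_i\Psi_{i,\e}$. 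Each $F_\e(h\sigma)=R(\Phi_\e+h\sum_{i:\sigma_i=1}\Psi_{i,\e})$ is a value of $R$ at a test object, so by Step 1 its $\csna$-seminorm is $O(\e^m)$ for every $m$; the factor $O(h)$ in $r_\e(h)$ multiplies a term controlled by moderateness of order $k+1$, i.e.\ $O(\e^{-N})$. With $h=\e^M$ this gives $\csna((\ud^kR)(\Phi_\e)(\Psi_{1,\e},\dots,\Psi_{k,\e}))\le O(\e^{m-kM})+O(\e^{M-N})$. Given a target order $p$, I first set $M\coleq N+p$ and then choose $m\ge p+kM$ in Step 1, making both terms $O(\e^p)$. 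As $p$, $k$, $\csna$ and the test objects were arbitrary, $R$ is negligible.

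The main obstacle is the uniformity hidden in the remainder of Step 2: the estimate must hold simultaneously for the whole family of intermediate test objects $(\Phi_\e+\sum_i s_i\Psi_{i,\e})_\e$ with $s\in[0,h]^k$, whereas moderateness as stated furnishes a bound for each individual net. I would resolve this by the standard fact that the moderateness estimates hold uniformly over bounded subsets of $\TOv\Delta$ (the segment being bounded for each fixed $\e$), using that the smooth map $\ud^{k+1}R$ carries bounded sets to bounded sets; alternatively one absorbs the interpolation parameter into an additional $0$-test-object direction. A secondary point to verify is that the Landau--Kolmogorov interpolation in Step 1 genuinely transfers to sections of $E$ over non-compact $M$, which is handled by localising to relatively compact chart and trivialisation domains and using that any two fibre metrics on $E$ are equivalent over compacta.
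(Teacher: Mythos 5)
First, a point of order: the paper does not actually prove Theorem \ref{nonegder} --- it is imported from \cite{bigone} without proof --- so your argument has to stand on its own. In outline it does, and it follows the route that is standard for results of this type in the full Colombeau setting: first upgrade the $C^0$-estimate to all seminorms of $\Gamma(E)$ at order $k=0$, then remove the differentials $\ud^k$ by difference quotients with step $\e^M$ in $0$-test-object directions. Your closure facts are correct and are the crux: $\TOvz\Delta$ is stable under sums and scalar multiples, $(\e^M\Psi_\e)_\e \in \TOvz\Delta$, and $(\Phi_\e+\Psi_\e)_\e \in \TOv\Delta$ for $(\Phi_\e)_\e \in \TOv\Delta$, $(\Psi_\e)_\e \in \TOvz\Delta$; consequently every evaluation point $\Phi_\e + \e^M\sum_{i:\sigma_i=1}\Psi_{i,\e}$ in your difference identity is again a test object, to which both the hypothesis and Step 1 apply. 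The bookkeeping $M \coleq N+p$, $m \ge p+kM$ is legitimate: the $N$ furnished by Definition \ref{scalmodneg} is quantified \emph{before} the test objects, hence uniform in them, and Step 1 yields decay of every order simultaneously, so there is no circularity in the order of choices. Step 1 itself is sound: Landau--Kolmogorov interpolation on compacta, localized to chart and trivialization domains and using equivalence of fibre metrics over compacta, correctly trades the rapidly decaying $C^0$-bound against the polynomially bounded high-order seminorms (the literature more often runs difference quotients in the $x$-variable too, but interpolation works equally well).

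The one genuine weak point is the first repair you offer for the uniformity of the remainder. ``Moderateness holds uniformly over bounded subsets of $\TOv\Delta$'' is not a fact available in this framework, and the observation that $\ud^{k+1}R$ is bounded on the segment for each fixed $\e$ only gives finiteness of the supremum at each $\e$ (by continuity of the restriction of $R$ to a compact piece of a finite-dimensional affine subspace); it says nothing about the growth of that supremum as $\e \to 0$, which is what the estimate $O(\e^{M-N})$ requires. The correct fix is your second, parenthetical suggestion made precise: for each $\e$ the map $s \mapsto \csna\bigl(\ud^{k+1}R(\Phi_\e+\sum_i s_i\Psi_{i,\e})(\Psi_{1,\e},\dotsc,\Psi_{k,\e},\Psi_{j,\e})\bigr)$ is continuous on the compact cube $[0,\e^M]^k$, so choose $s^{(\e)}$ attaining the supremum and write $s^{(\e)}_i = \e^M u^{(\e)}_i$ with $u^{(\e)} \in [0,1]^k$. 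The net $\bigl(\e^M\sum_i u^{(\e)}_i\Psi_{i,\e}\bigr)_\e$ lies in $\TOvz\Delta$ --- the conditions (VSO1), (VSO3), (VSO4') are stable under $\e$-dependent coefficients bounded by $1$, and (VSO2') follows since $u^{(\e)}_i x_\e \to 0$ whenever $x_\e \to 0$ and the scalars are bounded (use balanced neighborhoods of $0$). Thus the entire intermediate family collapses into a single test object $(\Phi_\e + \e^M\sum_i u^{(\e)}_i \Psi_{i,\e})_\e \in \TOv\Delta$, and moderateness of order $k+1$ applied to it yields the $O(\e^{-N})$ bound on the remainder factor. With that substitution your proof is complete; you identified the gap yourself, but the ``bounded sets'' justification as written would not close it.
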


Note that, more generally than in \cite{bigone}, we do not require the elements of $\Delta$ to have $M$ as their base manifold.

\section{Manifold-valued generalized functions}\label{sec4}

In this section let $M,N,P$ be manifolds with atlases $\cA_M,\cA_N,\cA_P$, respectively. Our definitions and results will (as is easily verified in each case) not depend on the choice of the atlases.

\begin{definition}
$\GMFb{M}{N} \coleq C^\infty(\VSO{\Delta} \times M, N)$ is called the \emph{basic space} of Colombeau generalized functions on $M$ taking values in $N$ with index set $\Delta$.
\end{definition}

\begin{definition}$R \in \GMFb{M}{N}$ is called \emph{c-bounded} if
\[ (\forall K \csub M)\ (\exists L \csub N)\ (\forall (\Phi_\e)_\e \in S(\Delta))\ (\exists \e_0>0)\ (\forall \e < \e_0): R(\Phi_\e, K) \subseteq L. \]
\end{definition}

\begin{definition} Given $R \in \GMFb{M}{N}$ and $S \in \GMFb{N}{P}$ we define their \emph{composition} $S \circ R \in \GMFb{M}{P}$ as
\[ (S \circ R)(\Phi, x) \coleq S(\Phi, R(\Phi, x))\qquad (\Phi \in \VSO{\Delta}, x \in M). \]
\end{definition}

Given $R \in \GMFb{M}{N}$ and $f \in C^\infty(N)$ we see that $f \circ R \in \GMFb{M}{\bK} \cong  \cE^{\Delta}(M)$, where the isomorphism (which also defines c-boundedness for elements of $\cE^\Delta(M)$) is given by \cite[27.17, p.~273]{KM}.
Suppressing this isomorphism notationally we simply write $f \circ R \in \cE^{\Delta}(M)$. We now come to the quotient construction. For the sake of conciseness, let us introduce some notation:

Given charts $(U,\varphi)$ on $M$ and $(V, \psi)$ on $N$ as well as $R \in \GMFb{M}{N}$, we set $R_{\psi\varphi} \coleq \psi \circ R \circ (\id \times \varphi^{-1})$ where it is defined. Furthermore, given $\Phi_\e \in \VSO{\Delta}$ we set $R_\e \coleq R(\Phi_\e, . ) \in C^\infty(M,N)$.

\begin{definition}\label{manval_mod}An element $R \in \GMFb{M}{N}$ is called \emph{moderate} if 
 \begin{enumerate}[label=(\roman*)]
  \item\label{manval_mod.1} $R$ is c-bounded,
  \item\label{manval_mod.2} $(\forall (U, \varphi) \in \cA_M)$ $(\forall L \csub U)$ $(\forall (V, \psi) \in \cA_N)$ $(\forall L' \csub V)$ $(\forall k,l \in \bN_0)$ $(\forall (\Phi_\e)_\e \in S(\Delta)$, $(\Phi_{1,\e})_\e$, $\dotsc$, $(\Phi_{k,\e})_\e \in \TOvz{\Delta})$ $(\exists N \in \bN)$:
  \begin{equation}\label{manval_mod_eq}
\norm{ \ud_1^k \ud_2^l R_{\psi\varphi} (\Phi_\e, \varphi(p))(\Psi_{1,\e},\dotsc,\Psi_{k,\e})} = O(\e^{-N})
  \end{equation}
uniformly for $p \in L \cap R_\e^{-1}(L')$.
\end{enumerate}
By $\GGm{M}{N}{\Delta}$ we denote the space of all moderate elements of $\GMFb{M}{N}$.
\end{definition}
Note that $C^\infty(M,N) = \cE^\emptyset_{\cM} [M,N] \subseteq \GMFm{M}{N}$.

\begin{definition}\label{manval_equiv}Two elements $R,S \in \GMFm{M}{N}$ are called \emph{equivalent}, written $R \sim S$, if
\begin{enumerate}[label=(\roman*)]
 \item \label{manval_equiv.1} $(\forall L \csub M)$ $(\forall (\Phi_\e)_\e \in S(\Delta))$ $(\forall h \in \Riem(N))$:
 \begin{equation}\label{manval_equiv_eq}
 \sup_{p \in L} d_h(R_\e(p), S_\e(p)) \to 0,
 \end{equation}
 \item \label{manval_equiv.2}  $(\forall (U,\varphi) \in \cA_M)$ $(\forall L \csub U)$ $(\forall (V, \psi) \in \cA_N)$ $(\forall L' \csub V)$ $(\forall k,l \in \bN_0)$\\
$(\forall (\Phi_\e)_\e \in \TOv{\Delta}$, $(\Psi_{1,\e})_\e$, $\dotsc$, $(\Psi_{k,\e})_\e \in \TOvz{\Delta})$ $(\forall m \in \bN)$:
 \[
 \Bigl\lVert \ud_1^k \ud_2^l \Bigl(R_{\psi\varphi} -
  S_{\psi\varphi} \bigr)(\Phi_\e, \varphi(p))(\Psi_{1,\e},\dotsc,\Psi_{k,\e}) \bigr\rVert = O(\e^m)
 \]
uniformly for $p \in L \cap R_\e^{-1}(L') \cap S_\e^{-1}(L')$.
\end{enumerate}

We call $R$ and $S$ \emph{equivalent of order $0$}, written $R \sim_0 S$, if they satisfy \ref{manval_equiv.1} and \ref{manval_equiv.2} for $k=l=0$.
\end{definition}

\begin{remark}The following claims are straightforward to verify:
\begin{enumerate}[label=(\roman*)]
 \item Lemma \ref{kartenschaetz} show that Definition \ref{manval_equiv} \ref{manval_equiv.1} is equivalent to the same condition with $(\forall h \in \Riem(N))$ replaced by $(\exists h \in \Riem(N))$.
 \item Definition \ref{manval_equiv} defines equivalence relations $\sim$ and $\sim_0$ on $\GGm{M}{N}{\Delta}$.
 \item Because differentials are symmetric multilinear maps (\cite[5.11, p.~58]{KM}), one may assume by polarization that $(\Psi_{1,\e})_\e = \dotsc = (\Psi_{k,\e})_\e$ in Definition \ref{manval_mod} \ref{manval_mod.2} and Definition \ref{manval_equiv} \ref{manval_equiv.2}.
\end{enumerate}
\end{remark}

\begin{definition}The quotient space $\GMFq{M}{N} \coleq \GMFm{M}{N} / {\sim}$ is called the space of \emph{compactly bounded Colombeau generalized functions} from $M$ to $N$.
\end{definition}

There is a coordinate-free characterization of equivalence of order zero as follows: 

\begin{theorem}\label{charnegcoordfree}
For $R,S \in \GMFm{M}{N}$ the following statements are equivalent.
 \begin{enumerate}[label=(\alph*)]
  \item \label{charnegcoordfree.2} $R \sim_0 S$.
  \item \label{charnegcoordfree.3} $(\forall L \csub M)$ $(\forall m \in \bN)$ $(\forall (\Phi_\e)_\e \in S(\Delta))$ $(\forall h \in \Riem(N))$:
\[ \sup_{p \in L} d_h ( R_\e(p), S_\e(p) ) = O(\e^m). \]
\end{enumerate}
Moreover, \ref{charnegcoordfree.3} is equivalent to the same condition with $(\forall h \in \Riem(N))$ replaced by $(\exists h \in \Riem(N))$.
\end{theorem}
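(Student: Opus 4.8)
The plan is to prove the two implications \ref{charnegcoordfree.2}\,$\Rightarrow$\,\ref{charnegcoordfree.3} and \ref{charnegcoordfree.3}\,$\Rightarrow$\,\ref{charnegcoordfree.2} separately and then to read off the ``$\exists h$'' variant from Lemma \ref{riemequiv}. The only geometric input needed in both directions is the elementary fact that on a compact subset of a single chart domain of $N$ the Riemannian distance $d_h$ and the Euclidean distance of coordinate images are mutually comparable: given a chart $(V,\psi)$ of $N$ and $L' \csub V$, pick $L'' \csub V$ with $L' \subseteq (L'')^\circ$; the coordinate expression of $h$ is bounded above and below by positive multiples of the Euclidean metric on the compact set $\psi(L'')$, so curves staying inside $L''$ have $h$-length comparable (up to a constant factor) to the Euclidean length of their coordinate curve, whereas a curve joining two points of $L'$ but leaving $L''$ has $h$-length at least $\dist_h(L', N \setminus (L'')^\circ) > 0$. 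This gives a constant $C = C(V,\psi,L',h)>0$ with $\norm{\psi(q_1)-\psi(q_2)} \le C\, d_h(q_1,q_2)$ for all $q_1,q_2 \in L'$; and if in addition $\psi(V)$ is convex (which one may arrange for a chart chosen freely, but not for a prescribed atlas chart) one also gets $d_h(q_1,q_2) \le C\,\norm{\psi(q_1)-\psi(q_2)}$ on $L'$, using the $\psi$-preimage of the Euclidean segment as a competing curve; compare also Lemmas \ref{diffestimate} and \ref{kartenschaetz}.

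\emph{\ref{charnegcoordfree.3}\,$\Rightarrow$\,\ref{charnegcoordfree.2}.} Here condition \ref{manval_equiv}\,\ref{manval_equiv.1} is immediate since $O(\e^m)$ forces convergence to $0$. For condition \ref{manval_equiv}\,\ref{manval_equiv.2} with $k=l=0$ I would fix $(U,\varphi) \in \cA_M$, $L \csub U$, $(V,\psi) \in \cA_N$, $L' \csub V$, $m \in \bN$, $(\Phi_\e)_\e \in S(\Delta)$: for any $p$ in $L \cap R_\e^{-1}(L') \cap S_\e^{-1}(L')$ both $R_\e(p)$ and $S_\e(p)$ lie in $L'$, so the first comparison above yields $\norm{(R_{\psi\varphi}-S_{\psi\varphi})(\Phi_\e,\varphi(p))} = \norm{\psi(R_\e(p))-\psi(S_\e(p))} \le C\, d_h(R_\e(p),S_\e(p)) \le C \sup_{q \in L} d_h(R_\e(q),S_\e(q))$, which is $O(\e^m)$ by \ref{charnegcoordfree.3}, uniformly in $p$.

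\emph{\ref{charnegcoordfree.2}\,$\Rightarrow$\,\ref{charnegcoordfree.3}.} This is the main step. Fix $L \csub M$, $m \in \bN$, $(\Phi_\e)_\e \in S(\Delta)$, $h \in \Riem(N)$. By c-boundedness of $R$ and $S$ there are $L_0 \csub N$ and $\e_0>0$ with $R_\e(L) \cup S_\e(L) \subseteq L_0$ for $\e < \e_0$. Cover $L_0$ by finitely many chart patches $(V_j,\psi_j)$, $j=1,\dotsc,r$, with $\psi_j(V_j)$ convex, and compacts $W_j \csub V_j$ with $L_0 \subseteq \bigcup_j W_j^\circ$; let $\delta>0$ be a Lebesgue number of the cover $\{W_j^\circ\}_j$ of $L_0$ for $d_h$. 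Also cover $L$ by finitely many chart patches $(U_i,\varphi_i)$ with compacts $L_i \csub U_i$, $L \subseteq \bigcup_i L_i^\circ$. By condition \ref{manval_equiv}\,\ref{manval_equiv.1} there is $\e_1 \le \e_0$ with $\sup_{p \in L} d_h(R_\e(p),S_\e(p)) < \delta$ for $\e<\e_1$; then for such $\e$ and any $p \in L$ the set $\{R_\e(p),S_\e(p)\} \subseteq L_0$ has $d_h$-diameter $<\delta$, hence lies in some $W_{j}$ with $j = j(p,\e)$, and choosing $i = i(p)$ with $p \in L_i$ we get $p \in L_i \cap R_\e^{-1}(W_j) \cap S_\e^{-1}(W_j)$. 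Condition \ref{manval_equiv}\,\ref{manval_equiv.2} applied with $k=l=0$ to $(U_i,\varphi_i)$, $(V_j,\psi_j)$ and compacts $L_i$, $W_j$ then gives $\norm{\psi_j(R_\e(p))-\psi_j(S_\e(p))} = O(\e^m)$, and the second comparison above gives $d_h(R_\e(p),S_\e(p)) \le C_j\,\norm{\psi_j(R_\e(p))-\psi_j(S_\e(p))}$. Since only finitely many index pairs $(i,j)$ occur, taking the maximum of the finitely many implied constants and the minimum of the finitely many $\e$-thresholds yields $\sup_{p \in L} d_h(R_\e(p),S_\e(p)) = O(\e^m)$.

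For the final assertion, ``$\forall h$'' trivially implies ``$\exists h$''; conversely, if \ref{charnegcoordfree.3} holds for some $h_0 \in \Riem(N)$, then for arbitrary $h \in \Riem(N)$ and $L \csub M$ c-boundedness again traps $R_\e(L) \cup S_\e(L)$ in a fixed $L_0 \csub N$ for small $\e$, and Lemma \ref{riemequiv} with $K=K'=L_0$ gives $C>0$ with $d_h(R_\e(p),S_\e(p)) \le C\, d_{h_0}(R_\e(p),S_\e(p)) = O(\e^m)$ uniformly for $p \in L$. The one genuine obstacle is the chart matching in \ref{charnegcoordfree.2}\,$\Rightarrow$\,\ref{charnegcoordfree.3}: condition \ref{manval_equiv}\,\ref{manval_equiv.2} controls only those $p$ for which $R_\e(p)$ and $S_\e(p)$ already sit in a prescribed compact of a single chart of $N$, so one must first establish that these two points lie in a common chart — which is precisely what the purely qualitative condition \ref{manval_equiv}\,\ref{manval_equiv.1} supplies through the Lebesgue-number argument. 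Everything beyond that is routine bookkeeping with finite covers and the comparison of Riemannian and Euclidean distances on compacta.
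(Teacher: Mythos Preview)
Your argument is correct, but it takes a somewhat different route from the paper's own proof. The paper does not use two-sided comparison inequalities between $d_h$ and coordinate distances; instead it invokes Lemma~\ref{kartenschaetz} to build a special atlas $\cB$ of $N$ in which, for each chart $(V,\psi)\in\cB$, there is a Riemannian metric $h$ with $d_h(p,q)=\norm{\psi(p)-\psi(q)}$ \emph{exactly} on $V$. After reducing (via c-boundedness and condition~\ref{manval_equiv.1}) to the situation $L\csub U$, $R_\e(L)\cup S_\e(L)\subseteq L'\csub V$, both implications follow from the single equality $d_h(R_\e(p),S_\e(p))=\norm{R_{\psi\varphi}(\Phi_\e,\varphi(p))-S_{\psi\varphi}(\Phi_\e,\varphi(p))}$. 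The localization that you carry out explicitly with a Lebesgue-number argument is precisely what the paper hides under the word ``clearly''.

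What each approach buys: the paper's use of normal coordinates converts the comparison into an identity, so no separate upper and lower estimates are needed and the proof is a few lines; your approach is more elementary in that it avoids Lemma~\ref{kartenschaetz} entirely and makes the finite-cover/Lebesgue-number reduction explicit. Both rely on atlas-independence of Definition~\ref{manval_equiv} (you when applying \ref{manval_equiv.2} to charts $(V_j,\psi_j)$ with convex image rather than to charts of $\cA_N$, the paper when working with the special atlas $\cB$), and both establish the ``$\exists h$'' variant in the same way via Lemma~\ref{riemequiv}.
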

\begin{proof}
Suppose that Definition \ref{manval_equiv} \ref{manval_equiv.1} holds. Using Lemma \ref{kartenschaetz}, construct an atlas $\cB$ of $N$ such that for each $(V, \psi) \in \cB$ there is $h \in \Riem(N)$ satisfying $d_h ( p,q) = \norm{\psi(p) - \psi(q)}$ for $p,q \in V$. In \ref{charnegcoordfree.3} one can clearly assume that $L \csub U$ and $R_\e(L) \cup S_\e(L) \subseteq L' \csub V$ for charts $(U,\varphi) \in \cA_M$ and $(V, \psi) \in \cB$. But then the claim follows directly from the equality $d_h ( R_\e(p), S_\e(p)) = \norm { R_{\psi\varphi}(\Phi_\e, \varphi(p)) - S_{\psi\varphi}(\Phi_\e, \varphi(p))}$.
\end{proof}

We will now show that composition preserves moderateness and equivalence of order zero; in the sequel, this will be used to show that equivalence of order zero actually is the same as equivalence for elements of $\GMFm{M}{N}$.

\begin{proposition}\label{fctcomp}
Let $R \in \GMFm{M}{N}$ and $S \in \GMFm{N}{P}$. Then
\begin{enumerate}[label=(\roman*)]
 \item\label{fctcomp.1} $S \circ R \in \GMFm{M}{P}$,
 \item\label{fctcomp.2} if $R' \in \GMFm{M}{N}$ and $S' \in \GMFm{N}{P}$ are such that $R \sim_0 R'$ and $S \sim_0 S'$, then $S \circ R \sim_0 S' \circ R'$.
\end{enumerate}
\end{proposition}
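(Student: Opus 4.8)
The plan is to prove \ref{fctcomp.1} by a chain-rule computation that is localized in charts by means of c-boundedness, and to reduce \ref{fctcomp.2} to the coordinate-free criterion of Theorem~\ref{charnegcoordfree}. For \ref{fctcomp.1}, c-boundedness of $S\circ R$ is immediate by composing the c-boundedness data of $R$ and of $S$. For the derivative estimates, fix charts $(U,\varphi)\in\cA_M$, $(W,\chi)\in\cA_P$ and compacta $L\csub U$, $L''\csub W$. By c-boundedness of $R$ there is a compact $L_N\csub N$ with $R(\Phi_\e,L)\subseteq L_N$ for $\e$ below a threshold; cover $L_N$ by finitely many charts $(V_j,\psi_j)\in\cA_N$ with compacta $L_j\csub V_j$ and $L_N\subseteq\bigcup_j L_j^\circ$. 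Then $R_\e(p)\in L_N$ forces $R_\e(p)\in L_j$ for some $j$, so it suffices to bound $\ud_1^k\ud_2^l(S\circ R)_{\chi\varphi}(\Phi_\e,\varphi(p))(\Psi_{1,\e},\dots,\Psi_{k,\e})$ on each set $L\cap R_\e^{-1}(L_j)\cap(S\circ R)_\e^{-1}(L'')$ and to take the maximum of the finitely many resulting exponents. On such a set one has $(S\circ R)_{\chi\varphi}(\Phi,x)=S_{\chi\psi_j}(\Phi,R_{\psi_j\varphi}(\Phi,x))$, so applying $\ud_1^k\ud_2^l$ and expanding via the higher chain rule (Fa\`a di Bruno in the single variable $(\Phi,x)$) together with multilinearity produces a finite sum of terms, each a value of $\ud_1^a\ud_2^b S_{\chi\psi_j}(\Phi_\e,R_{\psi_j\varphi}(\Phi_\e,\varphi(p)))$ on a subfamily of the $\Psi_{i,\e}$ and on $b$ vectors of the form $\ud_1^c\ud_2^d R_{\psi_j\varphi}(\Phi_\e,\varphi(p))$ evaluated on $\Psi$'s and coordinate directions. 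On our set $R_\e(p)\in L_j$, so the inner differentials are $O(\e^{-N_R})$ by moderateness of $R$, while $R_\e(p)\in L_j\cap S_\e^{-1}(L'')$, so the outer differentials are $O(\e^{-N_S})$ by moderateness of $S$ (the $\Psi_{i,\e}$ playing the role of the $0$-test objects in those estimates); multiplying and summing yields the desired $O(\e^{-N})$ with $N$ independent of the test objects.

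For \ref{fctcomp.2}, by Theorem~\ref{charnegcoordfree} it suffices to show that for every $L\csub M$, $m\in\bN$, $(\Phi_\e)_\e\in S(\Delta)$ and (conveniently chosen) $h\in\Riem(P)$ one has $\sup_{p\in L}d_h((S\circ R)_\e(p),(S'\circ R')_\e(p))=O(\e^m)$. Split this by the triangle inequality as $d_h(S_\e(R_\e(p)),S_\e(R'_\e(p)))+d_h(S_\e(R'_\e(p)),S'_\e(R'_\e(p)))$. In the second summand, c-boundedness of $R'$ confines $R'_\e(L)$ to a fixed compact $K'\csub N$, so it is $\le\sup_{q\in K'}d_h(S_\e(q),S'_\e(q))=O(\e^m)$ by $S\sim_0 S'$ and Theorem~\ref{charnegcoordfree}. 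In the first summand, c-boundedness of $R$ and $R'$ confines $R_\e(L)\cup R'_\e(L)$ to a fixed compact $K\csub N$, and $R\sim_0 R'$ together with Theorem~\ref{charnegcoordfree} gives $\delta_\e\coleq\sup_{p\in L}d_{h'}(R_\e(p),R'_\e(p))=O(\e^m)$ for every $m$, where $h'\in\Riem(N)$ is a fixed auxiliary metric.

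\textbf{The main obstacle} is then to bound $S_\e$ on $K$ by a Lipschitz estimate modulo a fixed power of $\e$, i.e.\ to find fixed $C>0$ and $N_0\in\bN$ with $d_h(S_\e(q),S_\e(q'))\le C\e^{-N_0}\,d_{h'}(q,q')$ whenever $q,q'\in K$ and $d_{h'}(q,q')$ is below a threshold. I would obtain this by covering $K$ by finitely many charts of $N$, using a Lebesgue-number argument to force $q$ and $q'$ (hence $R_\e(p)$ and $R'_\e(p)$ for small $\e$) into a common chart $(V_j,\psi_j)$ with $q,q'\in L_j\csub V_j$, joining them by the straight segment $\gamma$ in $\psi_j$-coordinates, and estimating $d_h(S_\e(q),S_\e(q'))\le\operatorname{length}_h(S_\e\circ\gamma)\le\bigl(\sup\{\norm{\ud S_\e(s)v}_h:s\in L_j,\ \norm{v}_{h'}\le1\}\bigr)\cdot\operatorname{length}_{h'}(\gamma)$. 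The length of $\gamma$ is $\le C\,d_{h'}(q,q')$ by Lemmas~\ref{riemequiv} and~\ref{kartenschaetz}, and the supremum is $O(\e^{-N_0})$: writing $\ud S_\e$ in the chart $\psi_j$ of $N$ and in finitely many charts of $P$ (chosen to cover the fixed compact $S_\e(L_j)$ furnished by c-boundedness of $S$), this is exactly the $k=0$, $l=1$ instance of moderateness of $S$, up to comparing the Riemannian and Euclidean norms on compacta via Lemma~\ref{riemequiv}. Combining, $\sup_{p\in L}d_h(S_\e(R_\e(p)),S_\e(R'_\e(p)))\le C\e^{-N_0}\delta_\e=O(\e^{m-N_0})$ for every $m$, hence $O(\e^{m'})$ for every $m'$. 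Both summands being $O(\e^m)$ for all $m$, Theorem~\ref{charnegcoordfree} yields $S\circ R\sim_0 S'\circ R'$. Apart from this Lipschitz estimate and the chain-rule bookkeeping of \ref{fctcomp.1}, the argument uses only compactness and the moderateness bounds already established.
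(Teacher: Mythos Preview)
Your argument is correct and follows the same overall plan as the paper: for \ref{fctcomp.1}, localize via c-boundedness and a finite chart cover of the intermediate compact set, then apply the chain rule to $(S\circ R)_{\chi\varphi}=S_{\chi\psi_j}(\Phi,R_{\psi_j\varphi}(\Phi,\cdot))$; for \ref{fctcomp.2}, split by the triangle inequality into an ``$S$ vs.\ $S'$'' term (handled by c-boundedness of $R'$ plus Theorem~\ref{charnegcoordfree}) and an ``$R$ vs.\ $R'$'' term (handled by a Lipschitz estimate on $S_\e$ combined with the $O(\e^m)$-closeness of $R_\e$ and $R'_\e$).

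The only noteworthy difference is in the packaging of the second half of \ref{fctcomp.2}. The paper stays in coordinates: having forced $p$ into $L\cap R_\e^{-1}(L')\cap R_\e^{\prime-1}(L')\cap(S\circ R)_\e^{-1}(L'')\cap(S\circ R')_\e^{-1}(L'')$, it writes the difference as $S_{\rho\psi}(\Phi_\e,R_{\psi\varphi}(\Phi_\e,\varphi(p)))-S_{\rho\psi}(\Phi_\e,R'_{\psi\varphi}(\Phi_\e,\varphi(p)))$ and invokes Lemma~\ref{diffestimate} directly, which already gives a Lipschitz constant of the form $C_1\sup_{z}(\|S_{\rho\psi}(\Phi_\e,z)\|+\|\ud_2 S_{\rho\psi}(\Phi_\e,z)\|)=O(\e^{-N_0})$ by moderateness of $S$. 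You instead work globally with Riemannian distances via Theorem~\ref{charnegcoordfree} and derive the Lipschitz bound by hand (Lebesgue number, straight segment in a chart, length estimate). That is a faithful re-derivation of Lemma~\ref{diffestimate} in a Riemannian setting; it is correct, but you could shorten the argument considerably by citing Lemma~\ref{diffestimate} once you are in a chart.
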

\begin{proof}
\ref{fctcomp.1}: $S \circ R$ clearly is c-bounded. In order to verify its moderateness fix $k,l \in \bN_0$, charts $(U,\varphi) \in \cA_M$ and $(W,\rho) \in \cA_P$, $L \csub U$ and $L'' \csub W$. By c-boundedness of $R$
\[ (\exists L' \csub N)\ (\forall(\Phi_\e)_\e \in \VSO{\Delta})\ (\exists \e_0>0)\ (\forall \e<\e_0): R(\Phi_\e,L)\subseteq L'. \]
Cover $L'$ by finitely many charts $(V_j, \psi_j) \in \cA_N$ and write $L' = \bigcup_j L_j$ with $L_j \csub V_j$.
Then it suffices to show that
\begin{equation}\label{fctcomp_eq}
(\forall j)\ (\forall (\Phi_\e)_\e,(\Psi_\e)_\e)\ (\exists N_j): \\
\norm{ \ud_1^k \ud_2^l ( (S \circ R)_{\rho\varphi} )(\Phi_\e,\varphi(p))(\Psi_\e,\dotsc,\Psi_\e)} = O(\e^{-N_j})
\end{equation}
uniformly for $p \in L \cap R_\e^{-1}(L_j') \cap (S \circ R)_\e^{-1}(L'')$.
We write $L', V, \psi$ instead of $L_j', V_j, \psi_j$. Such a simplification will also be used (less verbosely) in subsequent proofs by saying: using c-boundedness of $R$, we can assume that $p \in L \cap R_\e^{-1}(L') \cap (S \circ R)_\e^{-1}(L'')$ where $L' \csub V$ for a chart $(V,\psi) \in \cA_N$.

Given any $\Phi_\e \in \VSO{\Delta}$ and $p \in L \cap R_\e^{-1}(L') \cap (S \circ R)_\e^{-1}(L'')$ we have (for small $\e$) the equality
\[
 (S \circ R)_{\rho\varphi} (\Phi_\e, \varphi(p)) = S_{\rho\psi} ( \Phi_\e, R_{\psi\varphi}(\Phi_\e, \varphi(p))).
\]
By the chain rule \cite[3.18, p.~33]{KM}, the differentials of this expression appearing in \eqref{fctcomp_eq} are given by terms of the form
\begin{multline*}
 \ud_1^k \ud_2^l S_{\rho\psi} ( \Phi_\e, R_{\psi\varphi}(\Phi_\e, \varphi(p))) ( \Psi_\e, \dotsc, \Psi_\e) \\
\cdot \ud_1^{i_1} \ud_2^{j_1} R_{\psi\varphi} ( \Phi_\e, \varphi(p )) ( \Psi_\e, \dotsc, \Psi_\e) \cdot \dotsc \cdot \ud_1^{i_l} \ud_2^{j_k} R_{\psi\varphi} ( \Phi_\e, \varphi(p )) ( \Psi_\e, \dotsc, \Psi_\e)
\end{multline*}
which are moderate by assumption.

\ref{fctcomp.2} First, suppose that $S \sim_0 S'$. Given $L \csub M$ choose $L' \csub N$ such that $R(\Phi_\e, L) \subseteq L'$ for small $\e$. Then for any $h \in \Riem(N)$, $S \circ R \sim_0 S' \circ R$ follows using Theorem \ref{charnegcoordfree} from
\[ \sup_{p \in L} d_h ((S \circ R)(\Phi_\e, p), (S' \circ R)(\Phi_\e,p)) \le \sup_{q \in L'} d_h ( S(\Phi_\e, q), S' (\Phi_\e, q)) = O(\e^m). \]

Next, we establish that $R \sim R'$ implies $S \circ R \sim S \circ R'$; this will show \ref{fctcomp.2} by transitivity of $\sim_0$. Fix $(U, \varphi) \in \cA_M$, $L \csub U$, $(W, \rho) \in \cA_P$ and $L'' \csub W$. By c-boundedness of $R$ and $R'$ it suffices to obtain the estimate in Definition \ref{manval_equiv} \ref{manval_equiv.2} for $p \in L \cap R_\e^{-1}(L') \cap R_\e^{\prime -1}(L') \cap (S \circ R)_\e^{-1}(L'') \cap (S \circ R')_\e^{-1}(L'')$, where $L' \csub (V,\psi)$ for a chart $(V,\psi) \in \cA_N$. In this case we can write $(S \circ R)_{\rho\varphi} ( \Phi_\e, \varphi(p)) - (S \circ R')_{\rho\varphi} (\Phi_\e, \varphi(p)))$ as
\[ S_{\rho\psi}(\Phi_\e, R_{\psi\varphi}(\Phi_\e, \varphi(p))) - S_{\rho\psi}(\Phi_\e, R'_{\psi\varphi}(\Phi_\e, \varphi(p))) \]
which by Lemma \ref{diffestimate} is $O(\e^m)$ as desired.
\end{proof}

We will now show that moderateness and equivalence can be characterized by composition with smooth functions. This idea goes back to \cite{KSVintrins} and is the key to establishing that equivalence of order zero is the same as equivalence, which is the analogoue of Theorem \ref{nonegder} for manifold-valued generalized functions. 

\begin{theorem}\label{charmod}
 Let $R \in \GMFb{M}{N}$ be c-bounded. Then the following statements are equivalent:
 \begin{enumerate}[label=(\alph*)]
  \item \label{charmod.1} $R \in \cE^\Delta_{\cM}[M,N]$.
  \item \label{charmod.4} For all manifolds $P$ and all $S \in \GGm{N}{P}{\Delta}$: $S \circ R \in \GGm{M}{P}{\Delta}$.
  \item \label{charmod.3} $\forall f \in C^\infty(N, \bR)$: $f \circ R \in \Gm{M}{\Delta}$.
  \item \label{charmod.2} $\forall f \in C^\infty_0(N, \bR)$: $f \circ R \in \Gm{M}{\Delta}$.
 \end{enumerate}
\end{theorem}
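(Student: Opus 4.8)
The plan is to prove the cycle of implications $\ref{charmod.1} \Rightarrow \ref{charmod.4} \Rightarrow \ref{charmod.3} \Rightarrow \ref{charmod.2} \Rightarrow \ref{charmod.1}$. The implication $\ref{charmod.1} \Rightarrow \ref{charmod.4}$ is essentially Proposition \ref{fctcomp}\ref{fctcomp.1}, so nothing new is needed there. For $\ref{charmod.4} \Rightarrow \ref{charmod.3}$, one takes $P = \bR$ (viewed with its canonical vector bundle structure $\bR \times \bK$ so that $\GGm{\bR}{\bR}\Delta$ makes sense) and observes that any $f \in C^\infty(N,\bR)$, being smooth and independent of the $\VSO\Delta$-variable, defines an element of $\GGm N{\bR}\Delta$; indeed a fixed smooth map is automatically moderate (it is the image of $C^\infty(N,\bR) = \cE^\emptyset_\cM[N,\bR]$ under the inclusion noted after Definition \ref{manval_mod}) and c-bounded. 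Then $f \circ R \in \GGm M{\bR}\Delta \cong \Gm M\Delta$ via the isomorphism recorded just before the definition of $R_{\psi\varphi}$. The implication $\ref{charmod.3} \Rightarrow \ref{charmod.2}$ is trivial since $C^\infty_0(N,\bR) \subseteq C^\infty(N,\bR)$.

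The substantive implication is $\ref{charmod.2} \Rightarrow \ref{charmod.1}$, and this is where I expect the real work to lie. We are given that $R$ is c-bounded and that $f \circ R \in \Gm M\Delta$ for every compactly supported $f \in C^\infty(N,\bR)$, and we must verify the local estimate \eqref{manval_mod_eq}. Fix charts $(U,\varphi) \in \cA_M$, $(V,\psi) \in \cA_N$, compact sets $L \csub U$, $L' \csub V$, and orders $k,l$. By c-boundedness of $R$ there is $L'_1 \csub N$ with $R(\Phi_\e, L) \subseteq L'_1$ for small $\e$; shrinking, we may assume the relevant points $p$ lie in $L \cap R_\e^{-1}(L')$ with $L' \csub V$. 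The idea (going back to \cite{KSVintrins}) is to feed into the hypothesis the component functions of the chart $\psi$ cut off by a bump function: choose $\chi \in C^\infty_0(V)$ with $\chi \equiv 1$ on a neighborhood of $L'$, and for each coordinate index $i = 1,\dotsc,n = \dim N$ set $f_i \coleq \chi \cdot \psi^i \in C^\infty_0(N,\bR)$. Then on the set where $R_\e(p) \in L'$ we have $f_i \circ R (\Phi_\e, \varphi^{-1}(\cdot)) = \psi^i \circ R_{\psi\varphi}(\Phi_\e, \cdot)$, so the $i$-th component of $R_{\psi\varphi}$ agrees with the local representative (in the chart $\varphi$ on $M$ and the identity chart on $\bR$) of $f_i \circ R \in \Gm M\Delta$. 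Applying $\ud_1^k \ud_2^l$ and using that $\psi^i$ is linear in the chart (so its presence does not affect the derivatives past first order, and for the first-order piece it is just a projection), the moderateness estimate for $f_i \circ R$ as an element of $\Gm M\Delta$ — which by definition controls $\csna(\ud^k(f_i\circ R)(\Phi_\e)(\Psi_{1,\e},\dotsc,\Psi_{k,\e}))$ for $\csna$ running over seminorms of $\Gamma(M\times\bK)$, in particular sup-seminorms of all partial $\varphi$-derivatives over $L$ — yields precisely \eqref{manval_mod_eq} for each component $i$, and hence for $R_{\psi\varphi}$ itself.

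The main obstacle, and the point that needs care, is matching up the two notions of "derivative": the seminorms on $\Gm M\Delta = C^\infty(\VSO\Delta, \Gamma(M\times\bK))$ involve the $\VSO\Delta$-differential $\ud^k$ evaluated at a test object and applied to $0$-test objects, together with seminorms on $\Gamma(M\times\bK) = C^\infty(M)$ which on a chart domain amount to suprema of all partial derivatives $\ud_2^l$ over compact sets; whereas \eqref{manval_mod_eq} asks for the mixed differential $\ud_1^k\ud_2^l$ of the local representative. Making this precise requires: (i) recalling that the isomorphism $C^\infty(\VSO\Delta\times M,\bK) \cong C^\infty(\VSO\Delta, C^\infty(M))$ of \cite[27.17]{KM} intertwines $\ud_1^k$ with the convenient-calculus differential into $\Gamma(M\times\bK)$ and commutes with the $M$-side derivative $\ud_2^l$; (ii) noting that the uniformity in $p \in L \cap R_\e^{-1}(L')$ in \eqref{manval_mod_eq} is exactly a sup-seminorm over the compact $L$ (harmlessly enlarged), because outside $R_\e^{-1}(L')$ we have discarded points via c-boundedness and the cutoff $\chi$; (iii) checking that composing with the linear coordinate functions $\psi^i$ on the target is harmless — for the $l=0,k$ part it is literally post-composition with a projection, and the general Faà di Bruno expansion from the chain rule \cite[3.18]{KM}, as already used in the proof of Proposition \ref{fctcomp}, shows all terms are dominated by the component estimates. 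Once this bookkeeping is set up, the equivalence follows without further analytic input; I would present (i)–(iii) as a short lemma or absorb them into the paragraph above, and otherwise keep the proof brief by referring back to Proposition \ref{fctcomp} and to the characterization of the seminorms on $\Gamma(E)$ and on $C^\infty$-spaces used throughout.
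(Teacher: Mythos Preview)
Your proposal is correct and follows essentially the same route as the paper: the same cycle of implications, with \ref{charmod.1} $\Rightarrow$ \ref{charmod.4} by Proposition \ref{fctcomp}\ref{fctcomp.1}, the next two steps trivial, and \ref{charmod.2} $\Rightarrow$ \ref{charmod.1} obtained by cutting off the chart $\psi$ to a tuple $f \in C^\infty_0(N,\bR)^n$ with $f \equiv \psi$ near $L'$ and then invoking the exponential law to identify the mixed differentials $\ud_1^k\ud_2^l$ with the moderateness seminorms of $f^j \circ R$ in $\Gm M\Delta$. One minor remark: your point (iii) about Fa\`a di Bruno is superfluous, since passing from $R_{\psi\varphi}$ to its components $f^j \circ R \circ (\id \times \varphi^{-1})$ is just post-composition with a coordinate projection $\bR^n \to \bR$, so no chain rule is needed there.
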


\begin{proof}
\ref{charmod.1} $\Rightarrow$ \ref{charmod.4} is Proposition \ref{fctcomp} \ref{fctcomp.1} and \ref{charmod.4} $\Rightarrow$ \ref{charmod.3} $\Rightarrow$ \ref{charmod.2} are trivial.
 
For \ref{charmod.2} $\Rightarrow$ \ref{charmod.1} fix $k,l \in \bN_0$, $(U, \varphi) \in \cA_M$, $L \csub U$, $(V, \psi) \in \cA_N$ and $L' \csub V$.
Choose $f \in C^\infty_0(N, \bR)^n$ such that $f \equiv \psi$ in an open neighborhood of $L'$. It then suffices to show the estimate \eqref{manval_mod_eq} with the chart $\psi$ replaced by $f$ and the estimate taken over $p \in L$ instead of $p \in L \cap R_\e^{-1}(L')$.

Applying the exponential law $C^\infty(\VSO{\Delta} \times \varphi(U)) \cong C^\infty(\VSO{\Delta}, C^\infty(\varphi(U)))$
we see that $\ud_1^k \ud_2^l ( f \circ R \circ (\id \times \varphi^{-1}))(\Phi_\e, \varphi(p))(\Psi_{1,\e}, \dotsc, \Psi_{k,\e})$ is given by
\[ \ud^l ( \ud^k ( ( f \circ R)^\vee )(\Phi_\e)(\Psi_{1,\e}, \dotsc, \Psi_{k,\e}) \circ \varphi^{-1} ). \]

Because the norm of the $l$th differential of any smooth function $\bR^m \to \bR^n$ can be estimated by partial derivatives of order $\le l$ of its component functions it suffices to obtain estimates for
\begin{equation}\label{hellerstern}
\sup_{p \in L} \norm { \pd^\alpha ( \ud^k ( (f^j \circ R)^\vee)(\Phi_\e)(\Psi_{1,\e},\dotsc,\Psi_{k,\e}) \circ \varphi^{-1} ) (\varphi(p))}
\end{equation}
where $\alpha \in \bN_0^n$ and $f^j$ denotes the $j$th component of $f$.

Because
\[ \csna \colon g \mapsto \sup_{x \in L} \abso{\pd^\alpha ( g \circ \varphi^{-1})(\varphi(x))} \]
is a continuous seminorm of $C^\infty(M)$, \eqref{hellerstern} is nothing else than \[ \csna ( \ud^k ( ( f^j \circ R)^\vee )(\Phi_\e)(\Psi_{1,\e}, \dotsc, \Psi_{k,\e})) \] which is moderate according to Definition \ref{scalmodneg} by assumption.
\end{proof}

A similar result holds for equivalence: 

\begin{theorem}\label{charneg}
For $R,S \in \GMFm{M}{N}$ the following statements are equivalent:
 \begin{enumerate}[label=(\alph*)]
  \item \label{charneg.1} $R \sim S$.
  \item \label{charneg.2} $R \sim_0 S$.
  \item \label{charneg.5} $f \circ R - f \circ S \in \Gn{M}{\Delta}$ for all $f \in C^\infty(N, \bR)$.
\item \label{charneg.4} $f \circ R - f \circ S \in \Gn{M}{\Delta}$ for all $f \in C^\infty_0(N, \bR)$.
\end{enumerate}
\end{theorem}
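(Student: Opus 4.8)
The plan is to prove the chain \ref{charneg.1} $\Rightarrow$ \ref{charneg.2} $\Rightarrow$ \ref{charneg.4} $\Rightarrow$ \ref{charneg.1}, together with the trivial \ref{charneg.5} $\Rightarrow$ \ref{charneg.4} and the implication \ref{charneg.4} $\Rightarrow$ \ref{charneg.5}. Here \ref{charneg.1} $\Rightarrow$ \ref{charneg.2} is immediate, since $\sim$ differs from $\sim_0$ only by requiring condition \ref{manval_equiv.2} of Definition \ref{manval_equiv} for all $k,l \in \bN_0$ rather than only for $k=l=0$. For \ref{charneg.2} $\Rightarrow$ \ref{charneg.4}, fix $f \in C^\infty_0(N,\bR)$; regarding $f$ as an element of $\GMFm N\bR$ (with $f \sim_0 f$ trivially) and applying Proposition \ref{fctcomp} \ref{fctcomp.2} to $R \sim_0 S$ and $f \sim_0 f$ gives $f \circ R \sim_0 f \circ S$, i.e., by Theorem \ref{charnegcoordfree} applied with a Euclidean metric on $\bR$, $\sup_{p \in L} \abso{(f \circ R)_\e(p) - (f \circ S)_\e(p)} = O(\e^m)$ for all $L \csub M$, $m \in \bN$ and $(\Phi_\e)_\e \in S(\Delta)$. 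By Theorem \ref{charmod} we have $f \circ R, f \circ S \in \Gm M\Delta$, hence so is their difference; Theorem \ref{nonegder} then turns the zeroth-order estimate above into $f \circ R - f \circ S \in \Gn M\Delta$.

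For \ref{charneg.4} $\Rightarrow$ \ref{charneg.1} I would verify the two conditions of Definition \ref{manval_equiv} separately. For condition \ref{manval_equiv.2}, fix charts $(U,\varphi) \in \cA_M$, $(V,\psi) \in \cA_N$, compacta $L \csub U$, $L' \csub V$ and $k,l \in \bN_0$, $m \in \bN$, and choose $f \in C^\infty_0(N,\bR)^n$ with $f \equiv \psi$ on a neighborhood of $L'$ whose closure is contained in $V$. Exactly as in the proof of Theorem \ref{charmod}, by passing to a $c^\infty$-open neighborhood of $\Phi_\e$ and a smaller chart domain on which $R$, resp.\ $S$, maps into that neighborhood, the differentials $\ud_1^k \ud_2^l (R_{\psi\varphi} - S_{\psi\varphi})(\Phi_\e, \varphi(p))(\Psi_{1,\e}, \dotsc, \Psi_{k,\e})$ over $p \in L \cap R_\e^{-1}(L') \cap S_\e^{-1}(L')$ coincide componentwise with those of $(f^j \circ R - f^j \circ S) \circ (\id \times \varphi^{-1})$, which may then be estimated over all of $L$; the exponential law together with the estimate of an $l$th differential by partial derivatives of order $\le l$ reduces the required bound to $\csna(\ud^k((f^j \circ R - f^j \circ S)^\vee)(\Phi_\e)(\Psi_{1,\e}, \dotsc, \Psi_{k,\e})) = O(\e^m)$ for suitable continuous seminorms $\csna$ of $C^\infty(M)$, which holds by \ref{charneg.4}.

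For condition \ref{manval_equiv.1}, fix $L \csub M$, $(\Phi_\e)_\e \in S(\Delta)$ and $h \in \Riem(N)$; by c-boundedness of $R$ and $S$ there is $L' \csub N$ with $R_\e(L) \cup S_\e(L) \subseteq L'$ for small $\e$. Choose $f \in C^\infty_0(N,\bR)^D$ that is injective on a neighborhood of $L'$ (for instance a Whitney embedding of $N$ into some $\bR^D$ multiplied by a cutoff equal to $1$ near $L'$); then $f|_{L'}$ is a homeomorphism onto its image, so its inverse is uniformly continuous, and hence $\sup_{p \in L} d_h(R_\e(p), S_\e(p)) \le \omega\bigl( \sum_j \sup_{p \in L} \abso{(f^j \circ R)_\e(p) - (f^j \circ S)_\e(p)} \bigr)$ for some modulus of continuity $\omega$ with $\omega(0+) = 0$. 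By \ref{charneg.4} the argument of $\omega$ is $O(\e^m)$ and in particular tends to $0$, which is \ref{manval_equiv.1}; together with the previous paragraph this yields $R \sim S$. Finally, for \ref{charneg.4} $\Rightarrow$ \ref{charneg.5}, given $f \in C^\infty(N,\bR)$ and $K \csub M$, c-boundedness provides $L' \csub N$ with $R_\e(K) \cup S_\e(K) \subseteq L'$ for small $\e$; picking $\chi \in C^\infty_0(N)$ with $\chi \equiv 1$ near $L'$, the generalized functions $f \circ R - f \circ S$ and $(\chi f) \circ R - (\chi f) \circ S$ together with all their differentials in $\VSO\Delta$ agree at $(\Phi_\e, x)$ for $x \in K$ and small $\e$, and since every continuous seminorm of $C^\infty(M)$ is dominated by ones localized to such compacta while $\chi f \in C^\infty_0(N,\bR)$, \ref{charneg.4} gives $f \circ R - f \circ S \in \Gn M\Delta$.

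The step I expect to be the main obstacle is bookkeeping rather than conceptual, precisely as in Theorem \ref{charmod}: one must repeatedly justify replacing a chart map or a smooth function by a compactly supported one while enlarging the set over which the estimates are taken, that is, verify that on a suitable $c^\infty$-open neighborhood of $\Phi_\e$ the maps $R$ and $S$ still send the relevant compacta into the region where the cutoff is trivial, so that the differentials in question are genuinely unchanged there. Everything else is a direct transcription of the arguments of Theorems \ref{charmod} and \ref{nonegder} and Proposition \ref{fctcomp}.
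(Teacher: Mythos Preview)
Your proof is correct and largely parallels the paper's, with two differences worth noting. First, the paper closes the cycle as \ref{charneg.1} $\Rightarrow$ \ref{charneg.2} $\Rightarrow$ \ref{charneg.5} $\Rightarrow$ \ref{charneg.4} $\Rightarrow$ \ref{charneg.1}: it passes from \ref{charneg.2} directly to \ref{charneg.5} via Proposition \ref{fctcomp} \ref{fctcomp.2} (any $f \in C^\infty(N,\bR)$ already lies in $\GMFm{N}{\bR}$), so your separate cutoff argument for \ref{charneg.4} $\Rightarrow$ \ref{charneg.5} is correct but superfluous. Second, for condition \ref{manval_equiv.1} in the step \ref{charneg.4} $\Rightarrow$ \ref{charneg.1} the paper argues by contradiction: if $\sup_{p\in L} d_h(R_\e(p),S_\e(p))\not\to 0$, sequential compactness yields subsequences with $R(\Phi_{\e_k},p_k)\to x$ and $S(\Phi_{\e_k},p_k)\to y\ne x$, and any $f\in C^\infty_0(N,\bR)$ separating $x$ from $y$ contradicts \ref{charneg.4}. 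Your Whitney-embedding-plus-modulus-of-continuity route is a valid alternative that gives a more direct quantitative bound, at the price of slightly heavier machinery. The treatment of condition \ref{manval_equiv.2} is essentially identical in both proofs.
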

\begin{proof}
 \ref{charneg.1} $\Rightarrow$ \ref{charneg.2} is trivial, \ref{charneg.2} $\Rightarrow$ \ref{charneg.5} is Proposition \ref{fctcomp} \ref{fctcomp.2} and \ref{charneg.5} $\Rightarrow$ \ref{charneg.4} is trivial.
 \ref{charneg.4} $\Rightarrow$ \ref{charneg.1}: assume that Definition \ref{manval_equiv} \ref{manval_equiv.1} does not hold, i.e., $(\exists \delta>0)$ $(\exists L)$ $(\exists (\Phi_\e)_\e)$ $(\exists h)$ $(\exists (\e_k)_k \searrow 0)$ $(\exists (p_k)_k \in L^\bN)$: $d_h(R(\Phi_{\e_k}, p_k), S(\Phi_{\e_k}, p_k)) > \delta$ for all $k$. Because of c-boundedness of $R$ and $S$, taking subsequences we can assume that $R(\Phi_{\e_k}, p_k) \to x$ and $S(\Phi_{\e_k}, p_k) \to y \ne x$. Choose $f \in C^\infty_0(N, \bR)$ such that $f(x) \ne f(y)$. Then \ref{charneg.4} implies
\[ \abso{ f ( R(\Phi_{\e_k}, p_k)) - f ( S(\Phi_{\e_k}, p_k)) } \to 0 \]
which gives a contradiction. Hence, Definition \ref{manval_equiv} \ref{manval_equiv.1} must hold.

For Definition \ref{manval_equiv} \ref{manval_equiv.2}, given a chart $(V, \psi)$ of $N$ used in the test and $L' \csub V$, choose $f \in C^\infty_0(N, \bR)^n$ such that $f \equiv \psi$ in an open neighborhood $V' \subseteq V$ of $L'$. Then we can replace $\psi$ by $f$ for the estimate and instead of $p \in L \cap R(\Phi_\e,.)^{-1}(L') \cap S(\Phi_\e,.)^{-1}(L')$ estimate over $p \in L$, from which the claim is obvious as in Theorem \ref{charmod} \ref{charmod.2} $\Rightarrow$ \ref{charmod.1}.
\end{proof}

Hence, composition is well-defined as a map $\GMFq{M}{N} \times \GMFq{N}{P} \to \GMFq{M}{P}$.

\section{Generalized vector bundle homomorphisms}\label{sec5}

Let $E \to M$ and $F \to N$ be vector bundles with typical fibers $\bE$ and $\bF$ (which are finite dimensional vector spaces over $\bK$), dimensions $m'$ and $n'$ and trivializing coverings $\fS_E$ and $\fS_F$, respectively. For any vector bundle homomorphism $s \in \Hom(E,F)$ we denote by $\underline{s} \in C^\infty(M, N)$ the underlying smooth map between the basic manifolds such that $\pi_F \circ s = \underline{s} \circ \pi_E$. By $\Hom_c(E, \bR \times \bR^{n'})$ we denote the set of all $f \in \Hom(E, \bR \times \bR^{n'})$ such that $\underline{f} \colon M \to \bR$ has compact support.

\begin{definition}\label{hombase}By $\GHOMb{E}{F}$ we denote the space of all mappings $f \in C^\infty(\VSO{\Delta} \times E, F)$ for which there exists a mapping $\underline{f} \in C^\infty(\VSO{\Delta} \times M, N) = \GMFb{M}{N}$ satisfying $\pi_F \circ f = \underline{f} \circ (\id \times \pi_E)$ and such that for all $\lambda \in \VSO{\Delta}$ and all $x \in E$ the restriction $f (\lambda, .) |_{E_x} \colon E_x \to F_{\underline{f}(x)}$ is linear. $\GHOMb{E}{F}$ is called the \emph{basic space} of generalized vector bundle homomorphisms from $E$ to $F$.
\end{definition}

We introduce the following notation. Let $f \in \GHOMb{E}{F}$, $(U, \tau) \in \fS_E$, $(V, \sigma) \in \fS_F$, $(U_0, \varphi) \in \cA_M$, $\lambda_0 \in \bE$ and $p_0 \in U \cap U_0 \cap \underline{f}(\lambda_0, .)^{-1}(V)$. Then there is a $c^\infty$-open neighborhood $W$ of $\lambda_0$ and an open subset $\widetilde U \subseteq U \cap U_0$ of $p_0$ such that $\underline{f} ( W \times \widetilde U) \subseteq V$ and $\sigma \circ f \circ (\id \times \tau^{-1}) \in C^\infty(W \times \widetilde U \times \bE, V \times \bF)$. We write the components of this function as in 
\[ (\sigma \circ f \circ (\id \times \tau^{-1}))(\lambda,p,v) = (\underline{f} ( \lambda, p), \widearrow f_{\sigma\tau} ( \lambda,p)\cdot v)\qquad (\lambda \in W, p \in \widetilde U, v \in \bE) \]
with $\widearrow f_{\sigma\tau} \in C^\infty(W \times \widetilde U, \Lin(\bE; \bF))$. We see that the map
\[ \widearrow f_{\sigma\tau} \circ (\id \times \varphi^{-1}) \in C^\infty(W \times \varphi(\widetilde U), \Lin(\bE; \bF)) \]
has derivatives
\[ \ud_1^k \ud_2^l ( \widearrow f_{\sigma \tau} \circ (\id \times \varphi^{-1})) ( \lambda, \varphi(p))(\lambda_1, \dotsc, \lambda_k) \in \Lin (\bR^m, \dotsc, \bR^m; \Lin(\bE; \bF)) \]
(with $\bR^m$ appearing $l$ times) existing for all $\lambda \in W$, $p \in \widetilde U$ and all $\lambda_1, \dotsc, \lambda_k \in \VSO{\Delta}$. Again, we set $R_\e \coleq R(\Phi_\e, .)$.

\begin{definition}\label{hommod}
 We say that $R \in \GHOMb{E}{F}$ is \emph{moderate} if
\begin{enumerate}[label=(\roman*)]
\item \label{hommod.1} $\underline{R} \in \GMFm{E}{F}$,
\item \label{hommod.2} $(\forall (U, \tau) \in \fS_E)$ $(\forall (U_0, \varphi) \in \cA_M)$ $(\forall (V, \sigma) \in \fS_F)$ $(\forall k,l \in \bN_0)$ $(\forall L \csub U_0 \cap U)$ $(\forall L' \csub V)$ $(\forall (\Phi_\e)_\e \in S(\Delta), (\Psi_{1,\e})_\e, \dotsc, (\Psi_{k,\e})_\e \in S^0(\Delta))$ $(\exists N \in \bN)$:
\begin{equation}\label{estimate}
 \norm { \ud_1^k \ud_2^l ( \widearrow R_{\sigma\tau} \circ (\id \times \varphi^{-1} ) ) (\Phi_\e, \varphi(p))(\Psi_{1,\e}, \dotsc, \Psi_{k,\e}) } = O(\e^{-N})
\end{equation}
uniformly for $p \in L \cap \underline{R}_\e^{-1}(L')$. 
\end{enumerate}
The space of all moderate elements of $\GHOMb{E}{F}$ is denoted by $\GHOMm{E}{F}$.
\end{definition}

\begin{definition}\label{homequiv}
Two elements $R, S \in \Hom^\Delta_\cE(E,F)$ are called vector bundle equivalent (vb-equivalent), written $R \sim_{vb} S$, if
\begin{enumerate}[label=(\roman*)]
\item \label{homequiv.1} $\underline{R} \sim \underline{S}$ in $\GMFm{E}{F}$,
\item \label{homequiv.2} $(\forall (U, \tau) \in \fS_E)$ $(\forall (U_0, \varphi) \in \cA_M)$ $(\forall (V, \sigma) \in \fS_F)$ $(\forall k,l \in \bN_0)$ $(\forall L \csub U_0 \cap U)$ $(\forall L' \csub V)$ $(\forall (\Phi_\e)_\e \in S(\Delta), (\Psi_{1,\e})_\e, \dotsc, (\Psi_{k,\e})_\e \in S^0(\Delta))$ $(\forall m \in \bN)$:
\begin{multline}\label{vierstrich}
 \bigl\lVert \ud_1^k \ud_2^l ( \widearrow R_{\sigma\tau} \circ (\id \times \varphi^{-1}))(\Phi_\e, \varphi(p))(\Psi_{1,\e}, \dotsc, \Psi_{k,\e})  \\
  - \ud_1^k \ud_2^l ( \widearrow S_{\sigma\tau} \circ (\id \times \varphi^{-1}))(\Phi_\e, \varphi(p))(\Psi_{1,\e}, \dotsc, \Psi_{k,\e}) \bigr\rVert = O(\e^m)
\end{multline}
uniformly for all $p \in L \cap \underline{R}_\e^{-1}(L') \cap \underline{S}_\e^{-1}(L')$.
\end{enumerate}
If \ref{homequiv.2} holds only for $k=l=0$, we say that $R,S$ are vector bundle equivalent of order 0 (vb0-equivalent), written $R \sim_{vb0} S$.
\end{definition}

\begin{remark}Definitions \ref{hommod} and \ref{homequiv} are independent of the chosen atlases and families of trivializing coverings. Moreover, vb-equivalence and vb0-equivalence define an equivalence relation on $\Hom^\Delta_\cE(E,F)$.
\end{remark}

\begin{definition}
 $\GHOMq{E}{F} \coleq \GHOMm{E}{F} / {\sim_{vb}}$ is the space of \emph{generalized vector bundle homomorphisms} from $E$ to $F$.
\end{definition}

Let us consider the case where $E = M \times \bE$ and $F = N \times \bF$ are trivial. Then the vector part $\widearrow R = \pr_2 \circ R$ of $R \in \GHOMb{M}{N}$ can be seen as a mapping $\widearrow R \in C^\infty ( \VSO{\Delta} \times M, \Lin ( \bE; \bF ))$ which has components $R^{ij}$ ($i=1 \dotsc \dim \bF$, $j = 1 \dotsc \dim \bE$) in $C^\infty(\VSO{\Delta} \times M, \bK) \cong C^\infty(\VSO{\Delta}, C^\infty(M)) = \cE^\Delta(M)$. From the definitions above, the following is evident:

\begin{lemma}\label{koord}
If $E$ and $F$ are trivial, $R \in \Hom^\Delta_\cE(E,F)$ can be written as
 \[ R = (\underline{R}, (\widearrow R^{ij})_{ij} ) \]
 with base map $\underline{R} \in \GMFb{M}{N}$ and components $\widearrow R^{ij} \in \cE^\Delta(M)$. $R$ is moderate if and only if $\underline R$ and all $\widearrow R^{ij}$ are moderate; $R \sim_{vb} S$ if and only if $\underline{R} \sim \underline{S}$ and $\widearrow R^{ij} - \widearrow S^{ij} \in \cN^\Delta(M)$ for all $i,j$. In particular, $R \sim_{vb} S$ if and only if $R \sim_{vb0} S$.
\end{lemma}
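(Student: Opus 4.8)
The statement is essentially a bookkeeping matter: once $E = M\times\bE$ and $F = N\times\bF$ are trivialized, the estimates defining moderateness and vb‑equivalence of $R$ become, component by component, the estimates of Definition \ref{scalmodneg} for the matrix entries $\widearrow R^{ij}$, together with the corresponding conditions on the base map $\underline R$. So the plan is to unwind Definitions \ref{hommod} and \ref{homequiv} in the trivial case and invoke the scalar facts already established — the exponential‑law manipulation from the proof of Theorem \ref{charmod}, and Theorem \ref{nonegder}. Using that Definitions \ref{hommod} and \ref{homequiv} do not depend on the chosen trivializing coverings, I would take $\fS_E = \{(M,\tau)\}$ and $\fS_F = \{(N,\sigma)\}$ with the global trivializations, so that $\widearrow R_{\sigma\tau} = \widearrow R \in C^\infty(\VSO\Delta\times M,\Lin(\bE,\bF))$ and the matrix entries of $\widearrow R$ are exactly the $\widearrow R^{ij}\in\cE^\Delta(M)$. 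For a chart $(U_0,\varphi)\in\cA_M$ and $L\csub U_0$, applying the exponential law $C^\infty(\VSO\Delta\times\varphi(U_0))\cong C^\infty(\VSO\Delta,C^\infty(\varphi(U_0)))$ exactly as in the proof of Theorem \ref{charmod} shows that the left‑hand side of \eqref{estimate} is, up to a constant depending only on $\dim\bE$, $\dim\bF$ and $l$, comparable to $\max_{i,j,\,|\alpha|\le l}\bigl\lvert\pd^\alpha\bigl(\ud^k((\widearrow R^{ij})^\vee)(\Phi_\e)(\Psi_{1,\e},\dotsc,\Psi_{k,\e})\circ\varphi^{-1}\bigr)(\varphi(p))\bigr\rvert$, and that $g\mapsto\sup_{p\in L}\lvert\pd^\alpha(g\circ\varphi^{-1})(\varphi(p))\rvert$ is a continuous seminorm of $C^\infty(M)$, these seminorms generating the topology of $C^\infty(M)$.

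For moderateness: if $\underline R$ and all $\widearrow R^{ij}$ are moderate, then Definition \ref{hommod}\ref{hommod.1} holds, and by the comparison above together with Definition \ref{scalmodneg} applied to each $\widearrow R^{ij}$ the estimate \eqref{estimate} holds even over all of $L$; hence $R$ is moderate. Conversely, if $R\in\GHOMm{E}{F}$ then $\underline R\in\GMFm{M}{N}$ by \ref{hommod.1}, so $\underline R$ is c‑bounded: given $L\csub U_0$ and a test object $(\Phi_\e)_\e$ there is $L'\csub N$ with $\underline R(\Phi_\e,L)\subseteq L'$ for small $\e$, whence $L\cap\underline R_\e^{-1}(L') = L$ for small $\e$. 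Feeding this pair $L,L'$ into \eqref{estimate} and using the comparison again (the finitely many excluded $\e$ being irrelevant for an $O(\e^{-N})$ bound) yields moderateness of each $\widearrow R^{ij}$ in the sense of Definition \ref{scalmodneg}.

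For equivalence: running the identical argument on $\widearrow R - \widearrow S$, with $O(\e^{-N})$ replaced by $O(\e^m)$ for every $m$ and with c‑boundedness of both $\underline R$ and $\underline S$ used to replace the constrained range in \eqref{vierstrich} by $L$ for small $\e$, shows that condition \ref{homequiv.2} (for all $k,l$) is equivalent to $\widearrow R^{ij}-\widearrow S^{ij}\in\cN^\Delta(M)$ for all $i,j$; combined with \ref{homequiv.1}, i.e.\ $\underline R\sim\underline S$, this is the claimed characterization of $\sim_{vb}$. For the last assertion, the same reduction identifies condition \ref{homequiv.2} for $k=l=0$ with: for all $L\csub M$, all $m$ and all $(\Phi_\e)_\e$, $\sup_{p\in L}\lvert\widearrow R^{ij}(\Phi_\e,p)-\widearrow S^{ij}(\Phi_\e,p)\rvert = O(\e^m)$, which by Theorem \ref{nonegder} applied to the trivial bundle $M\times\bK$ is precisely $\widearrow R^{ij}-\widearrow S^{ij}\in\cN^\Delta(M)$. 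Since condition \ref{homequiv.1} is literally the same in the definitions of $\sim_{vb}$ and $\sim_{vb0}$, this gives $R\sim_{vb}S\iff R\sim_{vb0}S$.

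I do not expect a genuine obstacle here: all of the analytic content is already packaged in Theorems \ref{charmod} and \ref{nonegder}. The only steps requiring care are the two‑way translation between the chart‑and‑trivialization estimates and the intrinsic seminorm conditions on $\cE^\Delta(M)$, and the use of c‑boundedness of the base maps to discard the auxiliary compact set $L'$ from the admissible range of $\e$ for $\e$ small, so that the component estimates indeed hold over the full compact $L$ required by Definition \ref{scalmodneg}.
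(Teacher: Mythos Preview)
Your proof is correct; the paper offers no proof at all, declaring the lemma ``evident'' from the preceding definitions, and your detailed unwinding is exactly what that remark elides. The one point to keep explicit is that your appeal to Theorem \ref{nonegder} for the final assertion needs $\widearrow R^{ij}-\widearrow S^{ij}\in\cE^\Delta_\cM(M)$, which follows from the moderateness part of the lemma once $R,S\in\GHOMm{E}{F}$---the only context in which the equivalence $R\sim_{vb}S\Leftrightarrow R\sim_{vb0}S$ is ever used (cf.\ Corollary 5.9).
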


\begin{definition}Given $R \in \GHOMb{E}{F}$ and $S \in \GHOMb{F}{G}$ we define their composition $S \circ R \in \GHOMb{E}{G}$ by $(S \circ R)(\Phi,p) \coleq S (\Phi, R(\Phi,p))$ ($\Phi \in \VSO{\Delta}$, $p \in E$).
\end{definition}

In order to show that vb0-equivalence is the same as vb-equivalence we employ the fact that for every vector bundle we can find another one such that their direct sum is trivial (\cite[Theorem I, Section 2.5, p.~76]{GHV}). For trivial vector bundles one can then essentially argue coordinatewise and use the corresponding result for generalized functions.

\begin{lemma}\label{smoothcomp}Let $R \in \GHOMm{E}{F}$, $f \in \Hom(E_0, E)$ and $g \in \Hom(F, F_1)$. Then $g \circ R \circ f \in \GHOMm{E_0}{F_1}$. $R \sim_{vb0} R'$ implies $g \circ R \circ f \sim_{vb0} g \circ R' \circ f$, and $R \sim_{vb} R'$ implies $g \circ R \circ f \sim_{vb} g \circ R' \circ f$.
\end{lemma}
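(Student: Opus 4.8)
The plan is to prove the statement by chasing the definitions through local representatives, reducing everything to the chain rule together with the already-established estimates for $R$ (moderateness) and the generalized-function results from Section \ref{sec4}. First I would dispose of the base-map part: since $g \circ R \circ f$ has underlying smooth map $\underline{g} \circ \underline{R} \circ \underline{f}$ (note $\underline f$ and $\underline g$ are ordinary smooth maps, viewed as constant nets in $\GMFm{E_0}{F_1}$, etc.), moderateness of $\underline{g \circ R \circ f}$ follows from Proposition \ref{fctcomp} \ref{fctcomp.1}, and the two equivalence implications for the base maps follow from Proposition \ref{fctcomp} \ref{fctcomp.2}. Thus it remains to handle the vector part, i.e.\ condition \ref{hommod.2} of Definition \ref{hommod} and condition \ref{homequiv.2} of Definition \ref{homequiv}.

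For the vector part I would fix trivializations: $(U,\tau) \in \fS_{E_0}$, $(U_0,\varphi) \in \cA_{M_0}$ (base of $E_0$), $(V,\sigma) \in \fS_{F_1}$, $L \csub U_0 \cap U$, $L' \csub V$, and $k,l \in \bN_0$. Using c-boundedness of $\underline R$ (and of $\underline f$, $\underline g$, which is automatic as they are smooth) I can, exactly as in the proof of Proposition \ref{fctcomp}, cover the relevant compact sets in the base of $E$ and in the base of $F$ by finitely many trivializing charts, and hence assume we are working with a single trivialization $(U',\tau') \in \fS_E$ of $E$ and $(V',\sigma') \in \fS_F$ of $F$. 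In these coordinates the vector part of $g \circ R \circ f$ factors, fiberwise, as a product of three $\Lin$-valued smooth functions: the local matrix of $f$ (a fixed smooth function of the base point only, hence with all derivatives bounded on $L$), the local matrix $\widearrow R_{\sigma'\tau'}$ of $R$ composed with the local expression of $\underline R$ in the base, and the local matrix of $g$ evaluated at $\underline{R}$ applied to the base point. Applying $\ud_1^k \ud_2^l$ and the chain rule \cite[3.18, p.~33]{KM} produces a sum of terms, each a product in which the $R$-factor is of the form $\ud_1^{i} \ud_2^{j}(\widearrow R_{\sigma'\tau'} \circ (\id \times (\text{base chart})^{-1}))$ evaluated along $\underline{R}$ and fed the $(\Psi_{i',\e})_\e$'s, with $i \le k$, $j \le l$; the remaining factors are derivatives of the fixed smooth matrices of $f$ and $g$, which are bounded uniformly on the relevant compacta. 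Hence each term is $O(\e^{-N})$ by Definition \ref{hommod} \ref{hommod.2} for $R$, giving moderateness of $g \circ R \circ f$.

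For the equivalence statements I would argue the same way but subtract: writing $g \circ R \circ f - g \circ R' \circ f$ in local coordinates and using the (multi-)linearity in the matrix factors together with a telescoping of the chain-rule expansion, the difference becomes a sum of products in which at least one factor is a difference of the form $\ud_1^{i}\ud_2^{j}(\widearrow R_{\sigma'\tau'} - \widearrow{R'}_{\sigma'\tau'})$ (controlled by Definition \ref{homequiv} \ref{homequiv.2}, which holds for $k=l=0$ in the vb0 case and for all $k,l$ in the vb case) while the remaining factors are the $O(\e^{-N})$ quantities handled in the first part, or else the factor is $\widearrow R_{\sigma'\tau'}(\Phi_\e, \underline R_\e(\cdot)) - \widearrow R_{\sigma'\tau'}(\Phi_\e, \underline{R}'_\e(\cdot))$, which is $O(\e^m)$ by Lemma \ref{diffestimate} applied to the smooth map $\widearrow R_{\sigma'\tau'}(\Phi_\e,\cdot)$ using $\underline R \sim_0 \underline{R}'$ (equivalently $\underline R \sim \underline{R}'$, by Theorem \ref{charneg}). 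Multiplying an $O(\e^m)$ factor by an $O(\e^{-N})$ factor and noting $m$ is arbitrary yields the desired negligibility. The main obstacle I anticipate is purely bookkeeping: organizing the chain-rule expansion of a triple composition and the telescoping for the difference so that every term transparently falls into one of the two cases above; there is no conceptual difficulty, since the needed analytic inputs (Proposition \ref{fctcomp}, Lemma \ref{diffestimate}, and the defining estimates for $R$) are all in place, and in the vb case one must be slightly careful that the higher differentials of the base composition $\underline R$ that appear as arguments are themselves moderate, which is exactly Definition \ref{manval_mod} \ref{manval_mod.2}.
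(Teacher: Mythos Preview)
Your plan is correct and follows essentially the same route as the paper: handle the base map via Proposition \ref{fctcomp}, and for the vector part write the local expression as the matrix product
\[
\widearrow{(g \circ R \circ f)}_{\sigma_1\tau_0}(\Phi_\e,p)
= \widearrow g_{\sigma_1\sigma}\bigl(\underline R(\Phi_\e,\underline f(p))\bigr)\cdot
\widearrow R_{\sigma\tau}(\Phi_\e,\underline f(p))\cdot
\widearrow f_{\tau\tau_0}(p),
\]
then apply the chain rule together with boundedness of all derivatives of the smooth local data of $f$ and $g$ on compacta. The paper's proof is just this formula plus the one-line remark that the claim follows immediately.

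One small inaccuracy in your telescoping description for the equivalence part: the second kind of difference term is \emph{not} $\widearrow R_{\sigma'\tau'}(\Phi_\e,\underline R_\e(\cdot)) - \widearrow R_{\sigma'\tau'}(\Phi_\e,\underline{R'}_\e(\cdot))$. The argument of $\widearrow R_{\sigma\tau}$ is $\underline f(p)$, which is the same for $R$ and $R'$; the only factor in which $\underline R$ versus $\underline{R'}$ enters is the outer one, giving terms of the form
\[
D^\alpha\widearrow g_{\sigma_1\sigma}\bigl(\underline R(\Phi_\e,\underline f(p))\bigr)
-
D^\alpha\widearrow g_{\sigma_1\sigma}\bigl(\underline{R'}(\Phi_\e,\underline f(p))\bigr)
\]
(and, after differentiation, differences of derivatives of $\underline R$ and $\underline{R'}$). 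These are handled exactly by Lemma \ref{diffestimate} applied to the fixed smooth map $\widearrow g_{\sigma_1\sigma}$ together with $\underline R \sim \underline{R'}$, which is even simpler than what you wrote. With this correction the bookkeeping goes through as you outlined.
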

\begin{proof}
By Proposition \ref{fctcomp} we have the needed statements about the base map. For the vector part we simply note that for suitable local trivializations $\tau_0,\tau,\sigma,\sigma_1$,
\[ \widearrow{(f \circ R \circ g)}_{\sigma_0 \tau_1} (\Phi_\e,p) = \widearrow f_{\sigma_0 \sigma} ( \underline R(\Phi_\e, \underline{g}(p))) \cdot \widearrow R_{\sigma \tau}(\Phi_\e, g(p)) \cdot \widearrow g_{\tau\tau_1}(p). \]
Because derivatives of $\widearrow f_{\sigma_0\sigma}$ and $\widearrow g_{\tau\tau_1}$ are bounded on compact sets and $\underline{R}$ is c-bounded, the claim follows immediately.
\end{proof}

\begin{corollary}Given $R,S \in \GHOMm{E}{F}$ we have $R \sim_{vb0} S$ if and only if $R \sim_{vb} S$.
\end{corollary}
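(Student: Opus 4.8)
The plan is to deduce the corollary from the trivial-bundle case via the standard device of realizing $E$ and $F$ as direct summands of trivial bundles. The implication $R \sim_{vb} S \Rightarrow R \sim_{vb0} S$ is immediate (one simply discards all test conditions with $k+l>0$), so only the converse requires an argument. First I would invoke \cite[Theorem I, Section 2.5, p.~76]{GHV} to choose vector bundles $E' \to M$ and $F' \to N$ such that $E \oplus E'$ and $F \oplus F'$ are trivial, and write $\iota_E \in \Hom(E, E \oplus E')$ and $\pr_E \in \Hom(E \oplus E', E)$ for the canonical inclusion and projection (and similarly $\iota_F$, $\pr_F$), recording the identities $\pr_E \circ \iota_E = \id_E$ and $\pr_F \circ \iota_F = \id_F$.

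Now suppose $R \sim_{vb0} S$ with $R, S \in \GHOMm{E}{F}$. I would set $\widetilde R \coleq \iota_F \circ R \circ \pr_E$ and $\widetilde S \coleq \iota_F \circ S \circ \pr_E$. By Lemma \ref{smoothcomp} these are moderate elements of $\GHOMm{E \oplus E'}{F \oplus F'}$ and, again by Lemma \ref{smoothcomp}, $R \sim_{vb0} S$ implies $\widetilde R \sim_{vb0} \widetilde S$. Since the bundles $E \oplus E'$ and $F \oplus F'$ are trivial, Lemma \ref{koord} now applies to $\widetilde R$ and $\widetilde S$ and upgrades this to $\widetilde R \sim_{vb} \widetilde S$. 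Finally, using $R = \pr_F \circ \widetilde R \circ \iota_E$ and $S = \pr_F \circ \widetilde S \circ \iota_E$ together with Lemma \ref{smoothcomp} once more yields $R \sim_{vb} S$, completing the proof.

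I do not anticipate a real obstacle here: all of the analytic content has already been packaged into Lemma \ref{smoothcomp} (composition on both sides with smooth vector bundle homomorphisms preserves moderateness, vb0-equivalence and vb-equivalence) and into Lemma \ref{koord} (which in turn rests on Theorem \ref{charneg}, where the corresponding statement $\sim\,=\,\sim_0$ for manifold-valued generalized functions is proved). The only point that deserves a moment's care is that $\iota_E, \pr_E, \iota_F, \pr_F$ are genuine, $\Phi$-independent smooth vector bundle homomorphisms, so that Lemma \ref{smoothcomp} is legitimately applicable both in the step that pushes the equivalence up to the trivial bundles and in the step that pulls it back down; this is clear, so no further estimates are needed.
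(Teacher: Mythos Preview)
Your proposal is correct and follows essentially the same route as the paper: pass to trivial bundles via $\iota_F \circ (\,\cdot\,) \circ p_E$, invoke Lemma~\ref{koord} there, and descend again using Lemma~\ref{smoothcomp}. One minor remark: the ``in particular'' clause of Lemma~\ref{koord} rests on Theorem~\ref{nonegder} (the no-derivative negligibility test for sections) rather than on Theorem~\ref{charneg}, since both $\sim_{vb}$ and $\sim_{vb0}$ already impose the full equivalence $\underline R \sim \underline S$ on the base maps.
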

\begin{proof}
Choose $E'$ and $F'$ such that $E\oplus E'$ and $F \oplus F'$ are trivial; denote by $i_F$ and $p_E$ the canonical injection $F \to F \oplus F'$ and projection $E \oplus E' \to E$, respectively. By Lemmas \ref{koord} and \ref{smoothcomp} we have
\begin{align*}
 R \sim_{vb_0} S &\Longrightarrow \iota_F \circ R \circ p_E \sim_{vb0} \iota_F \circ S \circ p_E \\
& \Longleftrightarrow \iota_F \circ R \circ p_E \sim_{vb} \iota_F \circ S \circ p_E \\
& \Longrightarrow R \sim_{vb} S.\qedhere
\end{align*}
\end{proof}

\begin{proposition}\label{homcomp}Let $R \in \GHOMm{E}{F}$ and $S \in \GHOMm{F}{G}$. Then
\begin{enumerate}[label=(\roman*)]
 \item\label{homcomp.1} $S \circ R \in \GHOMm{E}{G}$,
 \item\label{homcomp.2} if $R' \in \GHOMm{E}{F}$ and $S' \in \GHOMm{F}{G}$ are such that $R \sim_{vb} R'$ and $S \sim_{vb} S'$, then $S \circ R \sim_{vb} S' \circ R'$.
\end{enumerate}
\end{proposition}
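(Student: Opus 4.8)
The plan is to handle the base map and the vector part of $S\circ R$ separately, each time reducing to the corresponding statement for manifold-valued generalized functions (Proposition \ref{fctcomp}, Theorem \ref{charneg}) together with a chain-rule/Leibniz computation carried out in local trivializations, exactly in the style of the proof of Proposition \ref{fctcomp}. For the base maps one has $\underline{S\circ R}=\underline S\circ\underline R\in\GMFb MP$ and $\underline{S'\circ R'}=\underline{S'}\circ\underline{R'}$, so Definition \ref{hommod}\ref{hommod.1} for $S\circ R$ is Proposition \ref{fctcomp}\ref{fctcomp.1}, and for \ref{homcomp.2} Proposition \ref{fctcomp}\ref{fctcomp.2} gives $\underline{S\circ R}\sim_0\underline{S'\circ R'}$, hence $\underline{S\circ R}\sim\underline{S'\circ R'}$ by Theorem \ref{charneg}; thus Definition \ref{homequiv}\ref{homequiv.1} holds in both cases.

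For the vector part, fix $(U,\tau)\in\fS_E$, $(U_0,\varphi)\in\cA_M$, $(W,\sigma)\in\fS_G$, $k,l\in\bN_0$, $L\csub U\cap U_0$ and $L''\csub W$. By c-boundedness of $\underline R$ there is $L'\csub N$ with $\underline R(\Phi_\e,L)\subseteq L'$ for small $\e$, uniformly over $(\Phi_\e)_\e\in S(\Delta)$; covering $L'$ by finitely many trivialization domains of $F$ on each of which a chart of $N$ is defined, one reduces (as in the proof of Proposition \ref{fctcomp}) to estimating over $p\in L\cap\underline R_\e^{-1}(L')\cap\underline{(S\circ R)}_\e^{-1}(L'')$ with $L'\csub V$ for a single $(V,\rho)\in\fS_F$ on which a chart $\psi$ of $N$ is defined. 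On this set the composition formula for vector bundle homomorphisms reads
\[
 \widearrow{(S\circ R)}_{\sigma\tau}(\Phi_\e,p)=\widearrow S_{\sigma\rho}\bigl(\Phi_\e,\underline R(\Phi_\e,p)\bigr)\cdot\widearrow R_{\rho\tau}(\Phi_\e,p),
\]
a composition of linear maps $\bE\to\bF\to\bG$. Writing $\widearrow S_{\sigma\rho}(\Phi_\e,\underline R(\Phi_\e,p))=(\widearrow S_{\sigma\rho}\circ(\id\times\psi^{-1}))(\Phi_\e,\psi(\underline R(\Phi_\e,p)))$ and composing the whole expression with $\varphi^{-1}$ in the base slot, one expands $\ud_1^k\ud_2^l$ by the chain rule \cite[3.18, p.~33]{KM} and the Leibniz rule. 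Every resulting summand is a product of: a derivative $\ud_1^a\ud_2^b(\widearrow S_{\sigma\rho}\circ(\id\times\psi^{-1}))$ evaluated at $(\Phi_\e,\psi(\underline R_\e(p)))$, which is $O(\e^{-N})$ by moderateness of $S$; derivatives $\ud_1^c\ud_2^d(\psi\circ\underline R\circ(\id\times\varphi^{-1}))(\Phi_\e,\varphi(p))$, which are $O(\e^{-N})$ by moderateness of $\underline R$; and a derivative $\ud_1^e\ud_2^f(\widearrow R_{\rho\tau}\circ(\id\times\varphi^{-1}))(\Phi_\e,\varphi(p))$, which is $O(\e^{-N})$ by moderateness of $R$. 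Hence \eqref{estimate} holds for $S\circ R$, proving \ref{homcomp.1}.

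For \ref{homcomp.2} one argues identically starting from the telescoping identity (arguments $(\Phi_\e,p)$ suppressed)
\begin{multline*}
 \widearrow{(S\circ R)}_{\sigma\tau}-\widearrow{(S'\circ R')}_{\sigma\tau}
 =(\widearrow S_{\sigma\rho}-\widearrow{S'}_{\sigma\rho})(\cdot,\underline R)\cdot\widearrow R_{\rho\tau}
 +\widearrow{S'}_{\sigma\rho}(\cdot,\underline R)\cdot(\widearrow R_{\rho\tau}-\widearrow{R'}_{\rho\tau})\\
 +\bigl(\widearrow{S'}_{\sigma\rho}(\cdot,\underline R)-\widearrow{S'}_{\sigma\rho}(\cdot,\underline{R'})\bigr)\cdot\widearrow{R'}_{\rho\tau}.
\end{multline*}
After composing with $\varphi^{-1}$ and applying $\ud_1^k\ud_2^l$ via Leibniz and the chain rule, every summand coming from the first two terms carries one factor that is $O(\e^m)$ for all $m$ — from \eqref{vierstrich} applied to $S,S'$, respectively to $R,R'$ — multiplied by factors that are $O(\e^{-N})$ by moderateness, so each such product is $O(\e^m)$ for all $m$. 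For the third term, each summand carries the difference of a derivative $\ud_1^a\ud_2^b(\widearrow{S'}_{\sigma\rho}\circ(\id\times\psi^{-1}))$ evaluated at $(\Phi_\e,\psi(\underline R_\e(p)))$ and at $(\Phi_\e,\psi(\underline{R'}_\e(p)))$; on a fixed compact neighborhood of $L'$, Lemma \ref{diffestimate} bounds this by a constant times $\norm{\psi(\underline R_\e(p))-\psi(\underline{R'}_\e(p))}$, where the constant involves $\ud_1^a\ud_2^{b+1}(\widearrow{S'}_{\sigma\rho}\circ(\id\times\psi^{-1}))$ and is $O(\e^{-N})$ by moderateness of $S'$, while $\norm{\psi(\underline R_\e(p))-\psi(\underline{R'}_\e(p))}=O(\e^m)$ for all $m$ because $\underline R\sim\underline{R'}$ (Definition \ref{manval_equiv}\ref{manval_equiv.2} with $k=l=0$, equivalently Theorem \ref{charnegcoordfree}). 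The remaining factors being moderate, the whole difference satisfies \eqref{vierstrich}, which together with Definition \ref{homequiv}\ref{homequiv.1} yields $S\circ R\sim_{vb}S'\circ R'$.

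The only genuinely analytic step — exactly as in Proposition \ref{fctcomp}\ref{fctcomp.2} — is the third telescoping term, where a Lipschitz estimate (Lemma \ref{diffestimate}) for the moderate, base-point-dependent family of local representatives of $\widearrow{S'}$ must be combined with the $\sim$-estimate for $\underline R-\underline{R'}$; everything else is the routine bookkeeping of which test-object slot is differentiated and of keeping track, via c-boundedness, of the compact sets over which the estimates are taken. One could alternatively reduce to trivial $E,F,G$ via $E\oplus E'$, $F\oplus F'$, $G\oplus G'$ using Lemmas \ref{koord} and \ref{smoothcomp} and then argue on scalar components of the form $\widearrow S^{ij}\circ\underline R\in\cE^\Delta(M)$, but estimating those compositions still requires the local computation above, so nothing is gained by this detour.
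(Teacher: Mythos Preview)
Your proof is correct and complete: the base-map part is handled via Proposition~\ref{fctcomp} and Theorem~\ref{charneg}, and the vector part by the local composition formula $\widearrow{(S\circ R)}_{\sigma\tau}=\widearrow S_{\sigma\rho}(\,\cdot\,,\underline R)\cdot\widearrow R_{\rho\tau}$ together with the chain rule, the Leibniz rule, and (for the third telescoping term) Lemma~\ref{diffestimate}. This is a genuinely different route from the paper's. The paper reduces to trivial bundles by choosing $E',F',G'$ with $E\oplus E'$, $F\oplus F'$, $G\oplus G'$ trivial, replaces $R,S$ by $\widetilde R=i_F\circ R\circ p_E$ and $\widetilde S=i_G\circ S\circ p_F$ via Lemma~\ref{smoothcomp}, and then invokes Lemma~\ref{koord}: in the trivial situation the vector part is a finite sum of scalar products $(\overrightarrow{\widetilde S}^{ij}\circ\underline{\widetilde R})\cdot\overrightarrow{\widetilde R}^{jk}$, so moderateness and the ideal property of $\cN^\Delta(M)$ in $\cE^\Delta_\cM(M)$ do the rest. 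The gain of the paper's detour, contrary to your closing remark, is real: once the bundles are trivial the local vector representatives are globally defined, so the bookkeeping with the sets $L\cap\underline R_\e^{-1}(L')\cap\underline{(S\circ R)}_\e^{-1}(L'')$ (and in particular the worry that $\widearrow{S'}_{\sigma\rho}$ be defined on the whole line segment in the third telescoping term) disappears on the bundle side, and the matrix-valued Leibniz expansion becomes ordinary scalar multiplication. What remains in either approach is exactly the one nontrivial analytic ingredient you isolate --- the moderateness/negligibility of $f\circ\underline R$ for $f\in\cE^\Delta_\cM(N)$ --- which is the chain-rule estimate from Proposition~\ref{fctcomp}; so the two proofs share the same analytic core, but the paper's packaging lets Lemmas~\ref{koord} and~\ref{smoothcomp} absorb the trivialization bookkeeping you carry out by hand.
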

\begin{proof}
Choose vector bundles $E'$, $F'$ and $G'$ such that $E \oplus E'$, $F \oplus F'$ and $G \oplus F'$ are trivial.
\[
 \xymatrix{
E \ar[r]^R \ar@<1ex>[d] & F \ar[r]^S \ar@<1ex>[d] & G \ar@<1ex>[d] \\
E \oplus E' \ar@<1ex>[u] \ar[r]_{\widetilde R} & F \oplus F \ar@<1ex>[u] \ar[r]_{\widetilde S} & G \oplus G'
}
\]
Then the corresponding mappings $\widetilde R$ and $\widetilde S$ are moderate by Lemma \ref{smoothcomp} and moderateness of $R \circ S$ follows if we can show moderateness of $\widetilde S \circ \widetilde R$; for the latter we note that $\underline{\widetilde S} \circ \underline{\widetilde R}$ is moderate by Proposition \ref{fctcomp}. Moreover, because all involved bundles are trivial we can use Lemma \ref{koord} in order to see that the vector part of $\widetilde S \circ \widetilde R$ is given by componentwise multiplication, i.e., $\overrightarrow{(\widetilde S \circ \widetilde R)}^{ik} = \sum_j \overrightarrow{\widetilde R}^{ij} \overrightarrow{\widetilde S}^{jk}$. From this, \ref{homcomp.1} and \ref{homcomp.2} are clear in the general case.
\end{proof}

Hence, composition is well-defined as a map $\GHOMq{E}{F} \times \GHOMq{F}{G} \to \GHOMq{E}{G}$. Finally, we are in the position to define the tangent of a generalized mapping.

\begin{definition}
 For $R \in \cE^\Delta[M,N]$ the tangent mapping $\tang R \colon \VSO{\Delta} \times \tang M \to \tang N$ of $R$ is defined as $(\tang R)(\Phi,v) \coleq \tang (R(\Phi,.))\cdot v$.
\end{definition}

\begin{proposition}\label{tangmap}
 Let $R \in \cE^\Delta[M,N]$. Then
\begin{enumerate}[label=(\roman*)]
 \item\label{tangmap.1} $\tang R \in \GHOMb{\tang M}{\tang N}$ with $\underline{\tang R} = R$.
 \item\label{tangmap.2} If $R$ is moderate, $\tang R$ is so.
 \item\label{tangmap.3} If $R,S \in \cE^\Delta_\cM[M,N]$ satisfy $R \sim S$ then $\tang R \sim_{vb} \tang S$.
\end{enumerate}
\end{proposition}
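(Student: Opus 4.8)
The plan is to reduce all three assertions to the corresponding statements about the underlying generalized function $R$. The key observation is that, in the natural bundle charts on $\tang M$ and $\tang N$, the vector part of $\tang R$ is simply a further partial differential — in the base variable — of a chart representative of $R$, so that the defining estimates for $\tang R$ at differentiation order $(k,l)$ become precisely those for $R$ at order $(k,l+1)$.

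For \ref{tangmap.1} I would work in coordinates. For $(U,\varphi)\in\cA_M$ and $(V,\psi)\in\cA_N$ write $\tang\varphi\colon\tang M|_U\to\varphi(U)\times\bR^m$ and $\tang\psi\colon\tang N|_V\to\psi(V)\times\bR^n$ for the induced trivializing charts. On a suitable domain (a product of a $c^\infty$-open set with an open set) the local representative $\tang\psi\circ\tang R\circ(\id\times(\tang\varphi)^{-1})$ of $\tang R$ is the map $(\lambda,x,\xi)\mapsto\bigl(R_{\psi\varphi}(\lambda,x),\ \ud_2 R_{\psi\varphi}(\lambda,x)\cdot\xi\bigr)$. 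Since $R\in\cE^\Delta(M,N)$, the map $R_{\psi\varphi}$ is smooth; by the facts on partial differentials recalled in the preliminaries $\ud_2 R_{\psi\varphi}$ is smooth with values in $\Lin(\bR^m,\bR^n)$, and the evaluation $\Lin(\bR^m,\bR^n)\times\bR^m\to\bR^n$ is bounded bilinear, hence smooth; composing, the local representative is smooth, so $\tang R\in C^\infty(\VSO\Delta\times\tang M,\tang N)$. Its first component yields $\pi_{\tang N}\circ\tang R=R\circ(\id\times\pi_{\tang M})$, i.e. $\underline{\tang R}=R$, and its second component is linear in $\xi$, so $\tang R(\lambda,\cdot)$ is fibrewise linear; thus $\tang R\in\GHOMb{\tang M}{\tang N}$.

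For \ref{tangmap.2} and \ref{tangmap.3} I would invoke the Remark after Definition \ref{homequiv} to use on $\tang M$ and $\tang N$ the trivializing coverings induced by $\cA_M$ and $\cA_N$. Condition \ref{hommod} \ref{hommod.1} for $\tang R$ then holds because $\underline{\tang R}=R$ is moderate by hypothesis, and condition \ref{homequiv} \ref{homequiv.1} for $\tang R,\tang S$ holds because $R\sim S$ by hypothesis; in particular \ref{tangmap.2} shows that $\tang R,\tang S$ are moderate, so that $\tang R\sim_{vb}\tang S$ is meaningful. For the vector-part conditions, fix the data appearing in Definition \ref{hommod} \ref{hommod.2} (respectively Definition \ref{homequiv} \ref{homequiv.2}). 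When the chart inducing $\tau$ is $\varphi$ and the one inducing $\sigma$ is the base chart $\psi$ of $V$, the chain rule gives $\widearrow{(\tang R)}_{\sigma\tau}\circ(\id\times\varphi^{-1})(\lambda,x)=\ud_2 R_{\psi\varphi}(\lambda,x)$, whence $\ud_1^k\ud_2^l\bigl(\widearrow{(\tang R)}_{\sigma\tau}\circ(\id\times\varphi^{-1})\bigr)=\ud_1^k\ud_2^{l+1}R_{\psi\varphi}$, and the set over which the estimate is taken is unchanged since $\underline{\tang R}=R$. Hence estimate \eqref{estimate} for $\tang R$ at order $(k,l)$ is estimate \eqref{manval_mod_eq} for $R$ at order $(k,l+1)$, which holds since $R$ is moderate; and \eqref{vierstrich} for $\tang R,\tang S$ at order $(k,l)$ is Definition \ref{manval_equiv} \ref{manval_equiv.2} for $R,S$ at order $(k,l+1)$, which holds since $R\sim S$. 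For an arbitrary pair of trivializing charts one first composes with the Jacobians of the relevant transition maps; these depend smoothly on the base point alone, so their derivatives are bounded on the compact set $L$, and the Leibniz rule reduces the order-$(k,l)$ estimates to finite sums of estimates for $R$ (respectively $R-S$) at orders $(k,l')$ with $l'\le l+1$ — exactly the mechanism already used in the proof of Lemma \ref{smoothcomp}.

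The only point that requires genuine care, as opposed to bookkeeping, is the convenient-calculus verification in \ref{tangmap.1} that $\tang R$ is jointly smooth, i.e. that the partial differential $\ud_2$ of a map of class $C^\infty(\VSO\Delta\times M,N)$ is again smooth; this is exactly what the preliminaries supply. The remaining subtlety is checking that the change-of-trivialization step in \ref{tangmap.2} and \ref{tangmap.3} costs no more than multiplication by transition Jacobians with locally bounded derivatives. Beyond these, there is no real obstacle: the whole proposition rests on the observation that passing to $\tang R$ shifts the order of $M$-differentiation up by one.
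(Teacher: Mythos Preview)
Your proposal is correct and follows essentially the same route as the paper. Both arguments hinge on the identity $\widearrow{(\tang R)}_{\sigma\tau}\circ(\id\times\varphi^{-1})=\ud_2 R_{\psi\varphi}$ in the tangent charts induced by $\cA_M$ and $\cA_N$, so that the vector-part estimates for $\tang R$ at order $(k,l)$ are exactly the estimates \eqref{manval_mod_eq} (respectively Definition~\ref{manval_equiv}~\ref{manval_equiv.2}) for $R$ at order $(k,l+1)$; your extra paragraph on arbitrary trivializations is superfluous once you have invoked the Remark after Definition~\ref{homequiv}, but does no harm.
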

\begin{proof}
 \ref{tangmap.1}: $\underline{\tang R} = R$ is clear. For smoothness of $TR$ fix a chart $(\tang V, \tang \psi)$ of $\tang N$ coming from a chart $(V, \psi)$ of $N$. Given $(\lambda_0, v_0) \in \VSO{\Delta}\times \tang M$, by smoothness of $R$ there exists a chart $(U,\varphi)$ on $M$ containing $\pi(v_0)$ and a $c^\infty$-open neighborhood $W$ of $\lambda_0$ such that $R(W \times U) \subseteq V$ and $\psi \circ R \circ (\id \times \varphi^{-1}) \in C^\infty(W \times \varphi(U), \psi(V))$. It follows that $\tang R(W \times \tang U) \subseteq \tang V$ and
\[ (\tang \psi \circ \tang R \circ (\id \times \tang  \varphi^{-1}))(\lambda,p,v) = \ud_2 ( \psi \circ R \circ (\id \times \varphi^{-1}))(\lambda,p)\cdot v \]
whence $\tang \psi \circ \tang R \circ (\id \times \tang \varphi^{-1}) \in C^\infty(W \times \varphi(U) \times \bR^m, \psi(V) \times \bR^n)$.

\ref{tangmap.2}: for moderateness of $\tang R$ fix charts $(U,\varphi)$ on $M$, $(V, \psi)$ on $N$ and corresponding vector bundle charts $(\tang U, \tang \varphi)$ and $(\tang V, \tang \psi)$. Let $k,l \in \bN_0$, $L \csub U$ and $L' \csub V$. Then for $\Phi_\e \in \VSO{\Delta}$ and $p \in L \cap \underline{R}_\e(L')$ we have (with $\tau,\sigma$ being the trivializations corresponding to $\tang \varphi, \tang \varphi$)
\[ (\widearrow R_{\sigma\tau} \circ (\id \times \varphi^{-1}))(\Phi_\e, \varphi(p)) = \ud_2 ( \psi \circ R \circ (\id \times \varphi^{-1}))(\Phi_\e, \varphi(p)) \]
which satisfies the moderateness estimates by assumption.

\ref{tangmap.3} is seen the same way.
\end{proof}

\section{Hybrid generalized functions}\label{sec6}

Spaces of hybrid generalized functions were introduced in \cite{genpseudo} in order to give meaning to the notaion of generalized sections along generalized mappings. This in turn is needed for the notion of a geodesic of a generalized pseudo-Riemannian metric.

\begin{definition}
 We set $\GHYBb{M}{F} \coleq C^\infty(\VSO{\Delta} \times M, F)$ and, for each of its elements $R$, $\underline{R} \coleq \pi_F \circ R \in \GMFb{M}{N}$.
\end{definition}

We use the notation $\widearrow R_\sigma(\lambda,p) \coleq \pr_2 \circ \sigma \circ R$ where it is defined for a local trivialization $(V, \sigma)$ of $F$, and set $R_\e \coleq R(\Phi_\e, .)$.

\begin{definition}\label{defhyb}
$R \in \GHYBb{M}{F}$ is called moderate if
\begin{enumerate}[label=(\roman*)]
 \item\label{defhyb.1} $\underline{R} \in \GMFm{M}{N}$,
 \item\label{defhyb.2} $(\forall (U,\varphi) \in \cA_M)$ $(\forall (V,\sigma) \in \fS_F)$ $(\forall L \csub U)$ $(\forall L' \csub V)$ $(\forall k,l \in \bN_0)$ $(\forall (\Phi_\e)_\e \in S(\Delta)$, $(\Psi_{1,\e})_\e$, $\dotsc$, $(\Psi_{k,\e})_\e \in S^0(\Delta))$ $(\exists N \in \bN)$:
\[ \norm { \ud_1^k \ud_2^l ( \widearrow R_{\sigma} \circ (\id \times \varphi^{-1}) )(\Phi_\e, \varphi(p)) ( \Psi_{1,\e}, \dotsc, \Psi_{k,\e} ) } = O(\e^{-N}) \]
uniformly on $p \in L \cap \underline{R}_\e^{-1}(L')$.
\end{enumerate}
The set of all such moderate $R$ is denoted by $\GHYBm{M}{F}$.
\end{definition}

\begin{definition}\label{defhyb_neg} $R,S \in \GHYBm{M}{F}$ are called equivalent (written $R \sim_h S$) if 
\begin{enumerate}[label=(\roman*)]
 \item $\underline{R} \sim \underline{S}$,
 \item \label{defhybsim.2} $(\forall (U,\varphi) \in \cA_M)$ $(\forall (V,\sigma) \in \fS_F)$ $(\forall L \csub U)$ $(\forall L' \csub V)$ $(\forall k,l \in \bN_0)$ $(\forall (\Phi_\e)_\e \in S(\Delta)$, $(\Psi_{1,\e})_\e$, $\dotsc$, $(\Psi_{k,\e})_\e \in S^0(\Delta))$ $(\forall m \in \bN)$:
\begin{multline*} \Bigl\lVert
 \ud_1^k \ud_2^l ( \widearrow R_{\sigma} \circ (\id \times \varphi^{-1}) )(\Phi_\e, \varphi(p)) ( \Psi_{1,\e}, \dotsc, \Psi_{k,\e} )  \\
- \ud_1^k \ud_2^l ( \widearrow S_{\sigma} \circ (\id \times \varphi^{-1}) )(\Phi_\e, \varphi(p)) ( \Psi_{1,\e}, \dotsc, \Psi_{k,\e} ) \Bigr\rVert = O(\e^m)
\end{multline*}
uniformly for $p \in L \cap \underline{R}_\e^{-1}(L') \cap \underline{S}_\e^{-1}(L')$.
\end{enumerate}
By $\sim_{h0}$ we denote the same relation but with $k=l=0$ in \ref{defhybsim.2}.
\end{definition}

\begin{definition}$\GHYBq{M}{F} \coleq \GHYBm{M}{F} / {\sim_h}$.
\end{definition}

Again, Definitions \ref{defhyb} and \ref{defhyb_neg} are independent of the atlas and trivializing coverings chosen and Definition \ref{defhyb_neg} gives equivalence relations. As in the previous section we first show that these definitions are functorial if we take as morphisms smooth maps and smooth vector bundle homomorphisms, respectively; this will allow us to reduce further questions to the case where $F$ is trivial.

\begin{definition}We define compositions $(R,S) \mapsto S \circ R$,
 \begin{align*}
  \GMFb{M}{N} \times \GHYBb{N}{G} &\to \GHYBb{M}{G} \\
  \GHYBb{M}{F} \times \GHOMb{F}{G} &\to \GHYBb{M}{G} 
 \end{align*}
by $(S \circ R)(\Phi,x) \coleq S(\Phi, R(\Phi,x))$.
\end{definition}
\begin{lemma}\label{lemmaB}
 Let $R \in \GHYBm{M}{F}$, $f \in C^\infty(M_0, M)$ and $g \in \Hom(F, F_1)$. Then $g \circ R \circ f \in \GHYBm{M_0}{F_1}$. $R \sim_h R'$ implies $g \circ R \circ f \sim_h g \circ R' \circ f$ and $R \sim_{h0} R'$ implies $g \circ R \circ f \sim_{h0} g \circ R' \circ f$.
\end{lemma}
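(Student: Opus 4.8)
The plan is to split the claim into its two halves --- the base map statements and the vector part statements --- and to handle each by localising and reducing to results already proven. For the base map, note that $\underline{g \circ R \circ f} = \underline{g} \circ \underline{R} \circ f$ where $f \in C^\infty(M_0,M)$ is smooth, $\underline g \in C^\infty(F,F_1)$ restricts to the smooth base map $\underline{g} \colon N \to N_1$, and $\underline R \in \GMFm{M}{N}$. Since $C^\infty(M_0,M) \subseteq \cE^\emptyset_\cM[M_0,M] \subseteq \GMFm{M_0}{M}$ and likewise for $\underline g$, Proposition \ref{fctcomp} \ref{fctcomp.1} gives $\underline{g \circ R \circ f} \in \GMFm{M_0}{N_1}$, and Proposition \ref{fctcomp} \ref{fctcomp.2} together with Theorem \ref{charneg} gives that $\underline R \sim \underline{R'}$ implies $\underline{g \circ R \circ f} \sim \underline{g \circ R' \circ f}$. (Here one uses that the constant-net smooth maps $f$, $\underline g$ are trivially equivalent to themselves.) This takes care of condition (i) in Definitions \ref{defhyb} and \ref{defhyb_neg}.

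For the vector part, I would first observe that $g \colon F \to F_1$ being a vector bundle homomorphism means that in suitable local trivializations $(V,\sigma) \in \fS_F$ and $(V_1, \sigma_1) \in \fS_{F_1}$ the composite reads, for $q \in M_0$ with $f(q)$ in the relevant chart domain and $\underline{R}_\e(f(q))$ landing in $V$,
\[
 \widearrow{(g \circ R \circ f)}_{\sigma_1}(\Phi_\e, q) = \widearrow g_{\sigma_1 \sigma}\bigl( \underline{R}_\e(f(q)) \bigr) \cdot \widearrow R_\sigma\bigl(\Phi_\e, f(q)\bigr),
\]
with $\widearrow g_{\sigma_1\sigma} \in C^\infty(V, \Lin(\bF, \bF_1))$ a fixed smooth (parameter-independent) matrix-valued map. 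Fixing charts $(U_0, \varphi_0) \in \cA_{M_0}$ on $M_0$, a chart on $M$, and trivializations as above, I would pull everything back by $\varphi_0^{-1}$ and $f$, then apply $\ud_1^k \ud_2^l$. By the chain rule \cite[3.18, p.~33]{KM} and the Leibniz rule this differential is a finite sum of products of: derivatives of the fixed smooth maps $\widearrow g_{\sigma_1\sigma}$, $f$ (in local coordinates), evaluated along $\underline{R}_\e(f(q))$ respectively at $\varphi_0(q)$; and partial differentials $\ud_1^{i} \ud_2^{j}(\widearrow R_\sigma \circ (\id \times \varphi^{-1}))$ of $R$'s vector part with $i \le k$, $j \le l$. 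Using c-boundedness of $\underline R$ (from $\underline R \in \GMFm{M}{N}$), the arguments $\underline R_\e(f(q))$ stay in a fixed compact set $L' \csub V$ for small $\e$ as $q$ ranges over a given $L \csub U_0$; hence the derivatives of the fixed smooth factors are bounded (Lemma \ref{diffestimate}), while the derivatives of $\widearrow R_\sigma$ are $O(\e^{-N})$ by moderateness of $R$ (Definition \ref{defhyb} \ref{defhyb.2}) and $O(\e^m)$ in the difference $R - R'$ by $R \sim_h R'$ (Definition \ref{defhyb_neg} \ref{defhybsim.2}). This yields the required moderateness estimate for $g \circ R \circ f$, and, estimating the difference
\[
 \widearrow g_{\sigma_1\sigma}(\underline R_\e(fq)) \cdot \widearrow R_\sigma(\Phi_\e, fq) - \widearrow g_{\sigma_1\sigma}(\underline{R'}_\e(fq)) \cdot \widearrow{R'}_\sigma(\Phi_\e, fq)
\]
by adding and subtracting $\widearrow g_{\sigma_1\sigma}(\underline R_\e(fq)) \cdot \widearrow{R'}_\sigma(\Phi_\e, fq)$ (so that one term controls the difference in the vector parts and the other the difference $\underline R_\e - \underline{R'}_\e$, which goes to $0$ by $\underline R \sim \underline{R'}$ and hence makes the bounded-moderate product $O(\e^m)$ — here using Theorem \ref{charneg} and Lemma \ref{diffestimate} applied to $\widearrow g_{\sigma_1\sigma}$), the equivalence statement. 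The $k=l=0$ case is literally the special case of the same computation, giving the $\sim_{h0}$ claim.

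The main obstacle I anticipate is purely bookkeeping rather than conceptual: keeping careful track of the domains in the chain-rule expansion, since the inner map $\underline R_\e$ is $\e$-dependent and one must invoke c-boundedness to guarantee that, uniformly for $q$ in the prescribed compact set and uniformly for small $\e$, the point $\underline R_\e(f(q))$ lies in a fixed compact subset of the trivialization domain $V$ on which $\widearrow g_{\sigma_1\sigma}$ and its derivatives are bounded. Once this uniformity is pinned down (exactly as in the proof of Proposition \ref{fctcomp} and Lemma \ref{smoothcomp}), the estimates are immediate. I would phrase the argument tersely, referring back to the ``using c-boundedness of $R$ we can assume $\ldots$'' device introduced in the proof of Proposition \ref{fctcomp}, and to Lemma \ref{smoothcomp} whose vector-part computation is the direct analogue of the one needed here.
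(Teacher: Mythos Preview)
Your proposal is correct and follows essentially the same route as the paper: invoke Proposition \ref{fctcomp} for the base maps, write out the local formula $\widearrow{(g\circ R\circ f)}_{\sigma_1} = \widearrow g_{\sigma_1\sigma}(\underline R(\Phi_\e,f(\cdot)))\cdot \widearrow R_\sigma(\Phi_\e,f(\cdot))$ in suitable trivializations and charts, and then read off the estimates via the chain rule. The paper's proof is simply terser --- it stops at ``from which the corresponding estimates can be obtained using the chain rule'' --- whereas you have spelled out the c-boundedness bookkeeping and the add-and-subtract step for the equivalence claim, which are exactly the details that underlie that phrase.
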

\begin{proof}
 Proposition \ref{fctcomp} gives the claim about the base maps. For the vector part we note that for suitable local trivializations $\sigma,\sigma_1$ and charts $\varphi_0, \varphi$ we have
\begin{multline*}
( \overrightarrow{ ( g \circ R \circ f)}_{\sigma_1} \circ (\id \times \varphi_0^{-1}))(\Phi_\e, \varphi_0(p)) \\
= \widearrow g_{\sigma_1 \sigma} ( \underline{R} ( \Phi_\e, f(p))) \cdot ( \widearrow R_\sigma \circ (\id \times \varphi^{-1}))(\Phi_\e, ( \varphi \circ f \circ \varphi_0^{-1})(\varphi_0(p)))
\end{multline*}
from which the corresponding estimates can be obtained using the chain rule.
\end{proof}

The following is evident.

\begin{lemma}\label{lemmaA}
 If $F$ is trivial $R \in \GHYBb{M}{F}$ can be written as $R = (\underline{R}, (\widearrow R^i)_i)$ with base map $\underline{R} \in \GMFb{M}{N}$ and components $\widearrow R^i \in \cE^\Delta(M)$, $i=1 \dotsc \dim F$. In this case, $R$ is moderate if and only if $\underline{R}$ and all $\widearrow R^i$ are moderate; $R \sim_h S$ and $R \sim_{h0} S$ are both equivalent to $\underline{R} \sim \underline{S}$ and $\widearrow R^i - \widearrow S^i \in \cN^\Delta(M)$ for all $i$.
\end{lemma}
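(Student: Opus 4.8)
The plan is to use that a smooth map into a trivial vector bundle decomposes canonically into its base map together with a tuple of scalar generalized functions, and then to recognise the conditions of Definitions \ref{defhyb} and \ref{defhyb_neg}, read off that decomposition, as the conditions of Definition \ref{scalmodneg} applied to the scalar components — the argument running parallel to the proof of Theorem \ref{charmod}. Concretely, fix a global trivialization identifying $F \cong N \times \bK^{\dim F}$. Since a smooth map into a product is precisely the pair of its coordinate maps, an element $R \in \GHYBb{M}{F} = C^\infty(\VSO\Delta \times M, F)$ amounts to a pair $(\underline R, \widearrow R)$ with $\underline R = \pi_F \circ R \in C^\infty(\VSO\Delta \times M, N) = \GMFb{M}{N}$ and $\widearrow R = \pr_2 \circ R \in C^\infty(\VSO\Delta \times M, \bK^{\dim F})$; splitting $\widearrow R$ into its $\dim F$ scalar components and applying the exponential law \cite[27.17, p.~273]{KM} yields $\widearrow R = (\widearrow R^i)_i$ with each $\widearrow R^i \in C^\infty(\VSO\Delta, C^\infty(M)) = \cE^\Delta(M)$, which is the first assertion.

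The first reduction I would make is to remove the restriction to $\underline R_\e^{-1}(L')$ in the two families of estimates. Choosing for $\fS_F$ the trivial covering $\{(N,\sigma)\}$ one has $\widearrow R_\sigma = \widearrow R$; and for any $L \csub U$, c-boundedness of $\underline R$ (which is part of $\underline R$ being moderate) gives some $L' \csub N$ with $\underline R_\e(L) \subseteq L'$ for small $\e$, so that for that particular $L'$ the set $L \cap \underline R_\e^{-1}(L')$ equals $L$ eventually. Since Definition \ref{defhyb}\ref{defhyb.2} demands the estimate for all $L' \csub N$, it is therefore equivalent to the same estimate taken over $p \in L$; likewise for Definition \ref{defhyb_neg}\ref{defhybsim.2}, using c-boundedness of both $\underline R$ and $\underline S$.

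Next I would match, componentwise, the estimate over $p \in L$ with the scalar conditions. The norm of $\ud_1^k \ud_2^l$ of a $\bK^{\dim F}$-valued map is controlled by the corresponding quantities for its scalar components, and a bound on $\ud_2^l$ in coordinates is equivalent to bounds on all partial derivatives $\pd^\alpha(\,\cdot\circ\varphi^{-1})(\varphi(p))$ with $\abso\alpha = l$; since the maps $g \mapsto \sup_{p\in L}\abso{\pd^\alpha(g\circ\varphi^{-1})(\varphi(p))}$ generate $\csn{C^\infty(M)}$, and since by the exponential law $\ud_1^k$ may be commuted past evaluation at points of $M$, the statement that for all $(U,\varphi)\in\cA_M$, $L\csub U$ and $k,l$ there is $N$ with $\norm{\ud_1^k\ud_2^l(\widearrow R^i\circ(\id\times\varphi^{-1}))(\Phi_\e,\varphi(p))(\Psi_{1,\e},\dotsc,\Psi_{k,\e})} = O(\e^{-N})$ uniformly for $p\in L$ is — by the same computation as in the proof of Theorem \ref{charmod} — exactly $\widearrow R^i \in \cE^\Delta_\cM(M)$ in the sense of Definition \ref{scalmodneg}, and the analogous $O(\e^m)$ estimates say exactly $\widearrow R^i - \widearrow S^i \in \cN^\Delta(M)$. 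Combining with the conditions on the base maps gives: $R$ is moderate iff $\underline R$ and all $\widearrow R^i$ are moderate, and $R \sim_h S$ iff $\underline R \sim \underline S$ and $\widearrow R^i - \widearrow S^i \in \cN^\Delta(M)$ for all $i$.

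Finally, to see that $R \sim_{h0} S$ is equivalent to the same conditions — and hence that $\sim_h$ and $\sim_{h0}$ coincide on $\GHYBm{M}{F}$ — note that $\sim_h\Rightarrow\sim_{h0}$ is trivial, so only the converse needs work, and this is where an earlier result is genuinely used. From $R,S\in\GHYBm{M}{F}$ and the moderateness characterization just proved, $\widearrow R^i-\widearrow S^i\in\cE^\Delta_\cM(M)$; and $\sim_{h0}$ forces $\underline R\sim\underline S$ together with the $k=l=0$ estimates, which in coordinates read $\sup_{p\in L}\abso{(\widearrow R^i-\widearrow S^i)(\Phi_\e)(p)} = O(\e^m)$ and hence, covering compacta by finitely many charts, $\sup_{p\in K}\norm{(\widearrow R^i-\widearrow S^i)(\Phi_\e)} = O(\e^m)$ for the norm induced by any Riemannian metric on $M\times\bK$. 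Theorem \ref{nonegder} then gives $\widearrow R^i-\widearrow S^i\in\cN^\Delta(M)$, closing the loop. The one genuinely substantive point is this last upgrade from a zeroth-order estimate to full negligibility, for which Theorem \ref{nonegder} is exactly what is needed; all the rest is routine bookkeeping with the exponential law.
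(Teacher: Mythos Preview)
Your argument is correct. The paper itself offers no proof of this lemma at all --- it merely prefaces the statement with ``The following is evident.'' --- so there is nothing to compare on the level of strategy. What you have written is exactly the kind of unpacking the author had in mind: the decomposition via the global trivialization and the exponential law, the removal of the $\underline R_\e^{-1}(L')$ constraint using c-boundedness (choosing $L'$ large enough that the intersection eventually equals $L$), and the identification of the resulting supremum estimates with the seminorm conditions of Definition~\ref{scalmodneg}, just as in the passage from \ref{charmod.2} to \ref{charmod.1} in Theorem~\ref{charmod}.

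You are also right to single out the equivalence of $\sim_h$ and $\sim_{h0}$ as the only place where something beyond bookkeeping is needed, and to invoke Theorem~\ref{nonegder} for it: once moderateness of $R$ and $S$ gives $\widearrow R^i - \widearrow S^i \in \cE^\Delta_\cM(M)$, the $k=l=0$ estimate from $\sim_{h0}$ is precisely the hypothesis of Theorem~\ref{nonegder} (for the trivial line bundle $M\times\bK$), yielding full negligibility. This is the same mechanism behind Lemma~\ref{koord} in the vector bundle homomorphism case, and the paper evidently regards it as routine enough not to spell out.
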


\begin{corollary}Given $R,S \in \GHYBb{M}{F}$, we have $R \sim_h S$ if and only if $R \sim_{h0} S$.
\end{corollary}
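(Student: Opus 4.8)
The plan is to mimic the proof of the corresponding statement for generalized vector bundle homomorphisms (the corollary following Lemma~\ref{smoothcomp}), replacing Lemmas~\ref{koord} and~\ref{smoothcomp} by their hybrid counterparts Lemmas~\ref{lemmaA} and~\ref{lemmaB}. The implication $R \sim_h S \Rightarrow R \sim_{h0} S$ is immediate, since $\sim_{h0}$ requires only a subset of the conditions defining $\sim_h$; so the whole content of the statement is the converse.

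First I would invoke the fact used already in Proposition~\ref{homcomp} (cf.~\cite[Theorem~I, Section~2.5, p.~76]{GHV}) that there is a vector bundle $F'$ over $N$ such that $F \oplus F'$ is trivial. Let $\iota_F \colon F \to F \oplus F'$ be the canonical injection and $p_F \colon F \oplus F' \to F$ the canonical projection; these are smooth vector bundle homomorphisms covering $\id_N$, and $p_F \circ \iota_F = \id_F$.

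Assuming now $R \sim_{h0} S$, I would apply Lemma~\ref{lemmaB} with $f = \id_M$ and $g = \iota_F$ to obtain $\iota_F \circ R \sim_{h0} \iota_F \circ S$ in $\GHYBm{M}{F \oplus F'}$. Since $F \oplus F'$ is trivial, Lemma~\ref{lemmaA} shows that $\sim_h$ and $\sim_{h0}$ coincide on $\GHYBm{M}{F \oplus F'}$ (both reduce to $\underline{R} \sim \underline{S}$ together with $\widearrow{R}^i - \widearrow{S}^i \in \cN^\Delta(M)$ for all $i$), hence $\iota_F \circ R \sim_h \iota_F \circ S$. Finally, applying Lemma~\ref{lemmaB} once more with $f = \id_M$ and $g = p_F$ yields $p_F \circ \iota_F \circ R \sim_h p_F \circ \iota_F \circ S$, which is precisely $R \sim_h S$.

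I do not expect any genuine obstacle: the argument is the formal ``retract'' trick, and every ingredient has already been established in the preceding lemmas. The only point requiring a moment's care is verifying that the compositions behave correctly on the base maps — i.e.\ that $\underline{\iota_F \circ R} = \underline{R}$ and likewise after composing with $p_F$, because $\iota_F$ and $p_F$ cover $\id_N$ — but this is immediate from Definition~\ref{hombase} and the definition of composition, and is in any case subsumed by the base-map part of Lemma~\ref{lemmaB}.
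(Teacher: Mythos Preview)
Your proposal is correct and follows essentially the same route as the paper's proof: choose $F'$ with $F\oplus F'$ trivial, push $R$ and $S$ forward along $\iota_F$, use Lemma~\ref{lemmaA} to upgrade $\sim_{h0}$ to $\sim_h$ in the trivial bundle, and then pull back along $p_F$ via Lemma~\ref{lemmaB}. The paper presents exactly this chain of implications, only more tersely.
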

\begin{proof}Choose $F'$ such that $F \oplus F'$ is trivial. By Lemmas \ref{lemmaB} and \ref{lemmaA} we then have
\begin{align*}
 R \sim_{h0} S &\Longrightarrow i_F \circ R \sim_{h0} i_F \circ S \\
& \Longleftrightarrow i_F \circ R \sim_h i_F \circ S \\
& \Longrightarrow R \sim_h S.\qedhere
\end{align*}
\end{proof}
\begin{proposition}\label{hybcomp}
 \begin{enumerate}[label=(\roman*)]
  \item\label{hybcomp.1} For $R \in \GHYBb{M}{F}$ and $S \in \GHOMm{F}{G}$, $S \circ R \in \GHYBm{M}{G}$; $R \sim_h R'$ and $S \sim_{vb} S'$ imply $S \circ R \sim_h S' \circ R'$.
  \item\label{hybcomp.2} For $R \in \GMFm{M}{N}$ and $S \in \GHYBm{N}{G}$, $S \circ R \in \GHYBm{M}{G}$; if $R \sim R'$ and $S \sim_h S'$, then $S \circ R \sim_h S' \circ R'$.
 \end{enumerate}
\end{proposition}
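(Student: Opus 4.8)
The plan is to follow the pattern of Propositions \ref{fctcomp} and \ref{homcomp}. In both parts the base map of the composition is $\underline{S}\circ\underline{R}$ in (i), resp.\ $\underline{S}\circ R$ in (ii), so the claims about base maps are immediate from Proposition \ref{fctcomp}; only the vector parts need attention. As before, using the functoriality Lemmas \ref{lemmaB} and \ref{smoothcomp} together with the fact that every vector bundle is a direct summand of a trivial one, it suffices to treat the case where all occurring vector bundles are trivial: in (ii) a general target $G$ is reduced by composing with the canonical injection $i_G\colon G\to G\oplus G'$ into a trivial bundle and afterwards applying the retraction $p_G$; in (i) one reduces $G$ the same way and reduces a general $F$ by writing $S\circ R=(S\circ p_F)\circ(i_F\circ R)$, with $i_F\circ R$ moderate hybrid and $S\circ p_F$ a moderate vector bundle homomorphism by the quoted lemmas.

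The technical core is the following auxiliary fact $(\ast)$: for $T\in\Gm N\Delta$ and $R\in\GMFm{M}{N}$ one has $T\circ R\colon(\Phi,p)\mapsto T(\Phi,R(\Phi,p))\in\Gm M\Delta$; moreover $T\circ R\in\Gn M\Delta$ whenever $T\in\Gn N\Delta$, and $T\circ R-T\circ R'\in\Gn M\Delta$ whenever $R'\in\GMFm{M}{N}$ with $R\sim R'$. (Smoothness of $T\circ R$, hence $T\circ R\in\Gb M\Delta$, is clear from the chain rule of convenient calculus.) I would prove $(\ast)$ essentially verbatim as the implication \ref{charmod.2}$\Rightarrow$\ref{charmod.1} of Theorem \ref{charmod}, with $T$ playing the role of $f$: fix a chart $(U,\varphi)$ of $M$; by c-boundedness of $R$ restrict to $p$ with $R_\e(p)$ in a fixed $L'\csub V$ for a chart $(V,\psi)$ of $N$; write $T(\Phi_\e,R_\e(p))=T_V(\Phi_\e,R_{\psi\varphi}(\Phi_\e,\varphi(p)))$ with $T_V\coleq T\circ(\id\times\psi^{-1})$ the local representative of $T$; and expand $\ud_1^k\ud_2^l$ of this by the chain rule (\cite[3.18, p.~33]{KM}) into a sum of terms in which a derivative $\ud_1^a\ud_2^b T_V$ evaluated along $(\Phi_\e,R_{\psi\varphi}(\Phi_\e,\cdot))$ — which is $O(\e^{-N})$ because $g\mapsto\sup_{x\in L'}\abso{\pd^\beta(g\circ\psi^{-1})(\psi(x))}$ is a continuous seminorm of $C^\infty(N)$ and $T$ is moderate — is multiplied by derivatives $\ud_1^c\ud_2^d R_{\psi\varphi}$, which are $O(\e^{-N})$ by moderateness of $R$ (Definition \ref{manval_mod}). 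As products of moderate nets are moderate, $T\circ R$ is moderate. For the two negligibility variants the corresponding $O(\e^{-N})$ becomes $O(\e^m)$: in the first case by negligibility of $T$; in the second case by $R\sim R'$, Lemma \ref{diffestimate} and the chain rule, exactly as in the final step of the proof of Proposition \ref{fctcomp} \ref{fctcomp.2}.

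Granting $(\ast)$, the rest is componentwise bookkeeping. For (ii) with $G$ trivial, Lemma \ref{lemmaA} gives $S=(\underline{S},(\widearrow{S}^i)_i)$ with $\widearrow{S}^i\in\Gm N\Delta$, so $S\circ R=(\underline{S}\circ R,(\widearrow{S}^i\circ R)_i)$, and moderateness as well as the equivalence statement follow from $(\ast)$, Proposition \ref{fctcomp} and Lemma \ref{lemmaA}. For (i) with $F,G$ trivial, Lemmas \ref{lemmaA} and \ref{koord} give $\overrightarrow{(S\circ R)}^k=\sum_i(\widearrow{S}^{ki}\circ\underline{R})\cdot\widearrow{R}^i$ with $\widearrow{S}^{ki}\in\Gm N\Delta$ and $\widearrow{R}^i\in\Gm M\Delta$, so moderateness is clear from $(\ast)$ and the stability of moderateness under sums and products. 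For $R\sim_h R'$, $S\sim_{vb}S'$ one reads off (Lemmas \ref{lemmaA}, \ref{koord}) that $\underline{R}\sim\underline{R}'$, $\widearrow{R}^i-\widearrow{R}'^i\in\Gn M\Delta$, $\underline{S}\sim\underline{S}'$, $\widearrow{S}^{ki}-\widearrow{S}'^{ki}\in\Gn N\Delta$, and telescopes
\begin{multline*}
(\widearrow{S}^{ki}\circ\underline{R})\,\widearrow{R}^i-(\widearrow{S}'^{ki}\circ\underline{R}')\,\widearrow{R}'^i
= \bigl(\widearrow{S}^{ki}\circ\underline{R}-\widearrow{S}^{ki}\circ\underline{R}'\bigr)\,\widearrow{R}^i \\
+ \bigl((\widearrow{S}^{ki}-\widearrow{S}'^{ki})\circ\underline{R}'\bigr)\,\widearrow{R}^i
+ (\widearrow{S}'^{ki}\circ\underline{R}')\,\bigl(\widearrow{R}^i-\widearrow{R}'^i\bigr);
\end{multline*}
by $(\ast)$ (applied with $\underline{R}\sim\underline{R}'$, with $\widearrow{S}^{ki}-\widearrow{S}'^{ki}\in\Gn N\Delta$, and with $\widearrow{R}^i-\widearrow{R}'^i\in\Gn M\Delta$, respectively) each summand is a negligible scalar net times a moderate one, hence negligible.

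I expect the only genuine difficulty to lie in $(\ast)$, or more precisely in keeping the chain-rule expansion of $\ud_1^k\ud_2^l$ of a composition of a generalized function with a generalized map organized and verifying that every factor carries the required moderateness or negligibility bound; but this is the same computation as in the proofs of Theorems \ref{charmod} and \ref{charneg}, so no new idea is needed, and everything else is the routine "summand of a trivial bundle'' reduction plus componentwise bookkeeping.
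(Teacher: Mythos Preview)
Your proposal is correct and follows the paper's own approach: reduce to trivial bundles via Lemmas \ref{lemmaB} and \ref{smoothcomp}, then argue componentwise using Lemmas \ref{lemmaA} and \ref{koord}. The paper's proof is a single sentence (``reduction to trivial bundles, such that the claims reduce to (i) multiplication or (ii) composition of the vector components''), and you have simply made explicit what that sentence hides---in particular the auxiliary fact $(\ast)$ that $T\circ R\in\Gm M\Delta$ for $T\in\Gm N\Delta$ and $R\in\GMFm{M}{N}$, together with its negligibility variants, which is indeed the chain-rule computation from Proposition \ref{fctcomp} and Theorem \ref{charmod}. Your observation that part (i) requires not only multiplication but also composition (namely $\widearrow{S}^{ki}\circ\underline{R}$) is more accurate than the paper's shorthand; the telescoping decomposition you give for the equivalence statement is the natural way to make this precise.
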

\begin{proof}This is shown as in Proposition \ref{homcomp} by reduction to trivial bundles, such that the claims reduce to \ref{hybcomp.1} multiplication or \ref{hybcomp.2} composition of the vector components of the mappings involved.
\end{proof}

Hence, we have well-defined composition mappings
\begin{align*}
 \GMFq{M}{N} \times \GHYBq{N}{G} & \to \GHYBq{M}{G}, \\
 \GHYBq{M}{F} \times \GHOMq{F}{G} & \to \GHYBq{M}{G}.
\end{align*}

\section{Generalized sections along generalized mappings}\label{sec7}

\begin{lemma}\label{vektorraum}
\begin{enumerate}[label=(\roman*)]
 \item\label{vektorraum.1}  Let $R \in \GHOMm{E}{F}$ and $S \in \GMFm{M}{N}$ with $\underline{R} \sim S$. Then there exists $R_0 \in \GHOMm{E}{F}$ such that $R \sim_{vb} R_0$ and $\underline{R_0} = S$.
 \item\label{vektorraum.2} Let $R \in \GHYBm{M}{F}$ and $S \in \GMFm{M}{N}$ with $\underline{R} \sim S$. Then there exists $R_0 \in \GHYBm{M}{F}$ such that $R \sim_{vb} R_0$ and $\underline{R_0} = S$.
\end{enumerate}
\end{lemma}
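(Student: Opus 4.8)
The plan is to reduce everything to the case of trivial bundles. Over a trivial bundle a (generalized) vector bundle homomorphism is, by Lemma \ref{koord} resp.\ Lemma \ref{lemmaA}, just a pair consisting of a base map and finitely many matrix- resp.\ vector-valued generalized functions on $M$; since all fibres of a trivial bundle are canonically identified, one can then simply retain the vector part and swap the base map $\underline R$ for $S$. The general case follows by embedding $E$ and $F$ as direct summands of trivial bundles, exactly as in the proof of Proposition \ref{homcomp}.

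In detail, for (i) I would first choose (as in the proof of Proposition \ref{homcomp}) vector bundles $E'$ over $M$ and $F'$ over $N$ with $E \oplus E'$ and $F \oplus F'$ trivial, together with the canonical inclusions $i_E\colon E \to E\oplus E'$, $i_F \colon F \to F \oplus F'$ and projections $p_E \colon E \oplus E' \to E$, $p_F \colon F \oplus F' \to F$, so that $p_E \circ i_E = \id_E$ and $p_F \circ i_F = \id_F$. Put $\widetilde R \coleq i_F \circ R \circ p_E$; by Lemma \ref{smoothcomp} this lies in $\GHOMm{E\oplus E'}{F \oplus F'}$ and has $\underline{\widetilde R} = \underline R$ (both $i_F$ and $p_E$ cover identities). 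As both bundles are trivial, Lemma \ref{koord} gives $\widetilde R = (\underline R, (\overrightarrow{\widetilde R}^{ij})_{ij})$ with all $\overrightarrow{\widetilde R}^{ij} \in \cE^\Delta(M)$ moderate. Now set $\widetilde{R_0} \coleq (S, (\overrightarrow{\widetilde R}^{ij})_{ij})$. This is a well-defined element of $\GHOMb{E\oplus E'}{F\oplus F'}$ (smoothness into the trivial target being immediate from smoothness of $S$ and of the $\overrightarrow{\widetilde R}^{ij}$), it is moderate by Lemma \ref{koord} since $S$ and the $\overrightarrow{\widetilde R}^{ij}$ are, and $\widetilde{R_0} \sim_{vb} \widetilde R$ because $\underline{\widetilde{R_0}} = S \sim \underline R = \underline{\widetilde R}$ by hypothesis and the vector parts agree.

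Finally I would descend by setting $R_0 \coleq p_F \circ \widetilde{R_0} \circ i_E \in \GHOMm{E}{F}$ (moderate by Lemma \ref{smoothcomp}). Its base map, read on the base manifolds, is $\id_N \circ S \circ \id_M = S$. Since $p_F \circ \widetilde R \circ i_E = p_F \circ i_F \circ R \circ p_E \circ i_E = R$, the equivalence-preservation part of Lemma \ref{smoothcomp} applied to $\widetilde{R_0} \sim_{vb} \widetilde R$ yields $R_0 \sim_{vb} R$, completing (i). Part (ii) is identical except that only the target $F$ needs trivializing (the source is a manifold): with $F'$ as above, take $\widetilde R \coleq i_F \circ R \in \GHYBm{M}{F\oplus F'}$ (Lemma \ref{lemmaB}), split it via Lemma \ref{lemmaA}, form the moderate $\widetilde{R_0} \coleq (S,(\overrightarrow{\widetilde R}^i)_i) \sim_h \widetilde R$, and set $R_0 \coleq p_F \circ \widetilde{R_0} \in \GHYBm{M}{F}$; then $\underline{R_0} = S$ and $R_0 \sim_h p_F \circ \widetilde R = R$ by Lemma \ref{lemmaB}. (Here the relation asserted in the statement is to be read as $\sim_h$ rather than $\sim_{vb}$.)

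I do not anticipate a serious obstacle. The only conceptual point is that the tempting direct construction -- literally transporting the fibre of $F$ from over $\underline R(\lambda,x)$ to over $S(\lambda,x)$ -- has no canonical, let alone smooth and estimate-preserving, meaning on a non-trivial bundle, which is why one is forced through the trivialization-by-complement detour; after that everything is routine bookkeeping with Lemmas \ref{koord}/\ref{lemmaA} and \ref{smoothcomp}/\ref{lemmaB}. The remaining technical checks -- that $\widetilde{R_0}$ is a bona fide smooth bundle map and that the identities $p_F \circ i_F = \id_F$, $p_E \circ i_E = \id_E$ hold on the nose for these pointwise-in-$\Phi$ compositions -- are straightforward.
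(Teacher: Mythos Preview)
Your proposal is correct and follows essentially the same route as the paper: both pass to trivializing complements $E\oplus E'$, $F\oplus F'$, swap the base map while keeping the vector part (the paper's formula \eqref{donnerstag} is exactly your $\widetilde{R_0}$), and then descend via $p_F\circ(\cdot)\circ i_E$ using Lemma \ref{smoothcomp}. The only cosmetic difference is that the paper first isolates the trivial case before doing the reduction, whereas you fold both steps together; you also correctly flag that the $\sim_{vb}$ in part \ref{vektorraum.2} should be read as $\sim_h$.
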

\begin{proof}
\ref{vektorraum.1}: Supposing first that $E$ and $F$ are trivial we can set
\begin{equation}\label{donnerstag}
R_0 ( \Phi, p, v) \coleq (S(\Phi,p), \widearrow R( \Phi,p) \cdot v)
\end{equation}
with $\widearrow R (\Phi,p) \cdot v \coleq (\pr_2 \circ R)(\Phi,p,v)$. Obviously $\underline{R_0} = S$ and $R_0 \sim_{vb} S$ holds.

For the general case choose vector bundles $E'$ and $F'$ such that $E \oplus E'$ and $F \oplus F'$ are trivial. Define $P \in \GHOMm{E \oplus E'}{F \oplus F'}$ by $P \coleq \iota_F \circ R \circ p_E$ where $i_F$ and $p_E$ denote the canonical injection of $F$ and projection to $E$, respectively; we have $\underline{P} = \underline{R} \sim S$, hence by the first part we obtain $P_0 \in \GHOMm{E \oplus E'}{F \oplus F'}$ such that $P_0 \sim_{vb} P$ and $\underline{P_0} = S$. We then set $R_0 \coleq p_F \circ P_0 \circ i_E$. It follows that $R_0 \sim p_F \circ P \circ i_E = p_F \circ i_F \circ R \circ p_E \circ i_E = R$ and $\underline{R_0} = \underline{P_0} = S$.

For \ref{vektorraum.2} we replace \eqref{donnerstag} by $R_0(\Phi,p) \coleq (S(\Phi,p), \vec R(\Phi, p))$ and proceed similarly.
\end{proof}

\begin{corollary}For given $S \in \cG^\Delta[M,N]$ the sets
\begin{align*}
 \GHOMq{E}{F}(S) & \coleq \{\,R \in \GHOMq{E}{F}\ |\ \underline{R} = S \,\} \\
\GHYBq{M}{F}(S) & \coleq \{\,R \in \GHYBq{M}{F}\ |\ \underline{R} = S \,\}
\end{align*}
 are vector spaces.
\end{corollary}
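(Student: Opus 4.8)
The plan is to show that each of the two sets is closed under addition and scalar multiplication with the zero element given by the (class of the) zero homomorphism over $S$, and that these operations are independent of the chosen representatives. The key observation is that the ambient spaces $\GHOMq{E}{F}$ and $\GHYBq{M}{F}$ themselves do not carry an obvious vector space structure (since the base map need not be additive), but once we fix the base map $S$ the fibers $F_{\underline{R}(x)} = F_{S(x)}$ over a common point are genuine vector spaces, so a fiberwise linear combination makes sense.

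First I would treat the hybrid case $\GHYBq{M}{F}(S)$, which is slightly cleaner. Given representatives $R, R' \in \GHYBm{M}{F}$ with $\underline{R} \sim \underline{R'} \sim S$, use Lemma \ref{vektorraum} \ref{vektorraum.2} to replace them by moderate representatives $R_0, R_0'$ with $\underline{R_0} = \underline{R_0'} = S$ on the nose. Then define $(R + R')(\Phi, p) \coleq R_0(\Phi, p) + R_0'(\Phi, p)$, where the sum is taken in the fiber $F_{S(\Phi, p)}$; since $S$ is a genuine element of $\GMFb{M}{N}$ this is a well-defined element of $C^\infty(\VSO\Delta \times M, F)$ with underlying map $S$, hence lies in $\GHYBb{M}{F}$. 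Moderateness follows by reducing to the trivial case via Lemma \ref{lemmaB} (embed $F$ into a trivial $F \oplus F'$) and Lemma \ref{lemmaA}: over a trivial bundle the vector part of $R_0 + R_0'$ is just the componentwise sum $\widearrow R_0^i + \widearrow R_0'^i$ in $\cE^\Delta(M)$, which is moderate because $\cE^\Delta_\cM(M)$ is a module. Scalar multiplication is handled identically. The vector space axioms are then inherited componentwise from $\cE^\Delta(M)$ (again after embedding into a trivial bundle), and the zero vector is the class of $(\Phi,p) \mapsto (S(\Phi,p), 0)$.

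For well-definedness on equivalence classes, suppose $R_0 \sim_h \widetilde R_0$ and $R_0' \sim_h \widetilde R_0'$ with all four base maps equal to $S$; I must check $R_0 + R_0' \sim_h \widetilde R_0 + \widetilde R_0'$. Again reduce to trivial $F$ via Lemma \ref{lemmaB}, where by Lemma \ref{lemmaA} equivalence is equivalent to $\widearrow R_0^i - \widearrow{\widetilde R_0}^i \in \cN^\Delta(M)$ and $\widearrow R_0'^i - \widearrow{\widetilde R_0'}^i \in \cN^\Delta(M)$ for all $i$; since $\cN^\Delta(M)$ is closed under addition, $(\widearrow R_0^i + \widearrow R_0'^i) - (\widearrow{\widetilde R_0}^i + \widearrow{\widetilde R_0'}^i) \in \cN^\Delta(M)$, as required. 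The case $\GHOMq{E}{F}(S)$ is entirely parallel, using Lemma \ref{vektorraum} \ref{vektorraum.1}, Lemma \ref{smoothcomp} for the reduction to trivial bundles, and Lemma \ref{koord} in place of Lemma \ref{lemmaA}; here one embeds both $E$ into a trivial $E \oplus E'$ and $F$ into a trivial $F \oplus F'$ and argues with the component matrices $\widearrow R^{ij} \in \cE^\Delta(M)$.

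The main obstacle I anticipate is purely bookkeeping rather than conceptual: making precise that the fiberwise sum $R_0(\Phi,p) + R_0'(\Phi,p)$ depends smoothly on $(\Phi,p)$ and that it does indeed define an element of the basic space with underlying map exactly $S$ — this is transparent once one notes that addition $F \times_N F \to F$ in the fibered product is a smooth vector bundle morphism and that $(R_0, R_0')$ lands in that fibered product precisely because both have underlying map $S$. After that the argument is a routine transfer of the module/ideal structure of $\cE^\Delta_\cM(M)$ and $\cN^\Delta(M)$ through the reduction-to-trivial-bundles machinery already developed in Sections \ref{sec5} and \ref{sec6}.
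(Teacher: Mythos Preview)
Your proposal is correct and is precisely the argument the paper leaves implicit: the corollary is stated without proof immediately after Lemma~\ref{vektorraum}, and your use of that lemma to normalize representatives to a common base map, followed by the reduction to trivial bundles via Lemmas~\ref{smoothcomp}/\ref{koord} (resp.\ \ref{lemmaB}/\ref{lemmaA}) to transfer the module and ideal structure of $\cE^\Delta_\cM(M)$ and $\cN^\Delta(M)$, is exactly the intended route. One small remark: your well-definedness check assumes all four base maps equal the same representative $S$, but the same trivial-bundle computation goes through verbatim when they equal two different representatives $S \sim S'$, so the vector space structure is also independent of the chosen representative of the class $S \in \cG^\Delta(M,N)$.
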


We call $\GHYBq{M}{F}(S)$ the space of generalized sections along $S$. In particular, we denote by $\fX_\cG(S) \coleq \GHYBq{M}{TN}(S)$ the space of \emph{generalized vector fields on $S$}. This notion is necessary in order to define the notion of a geodesic of a generalized pseudo-Riemannian metric (cf.~\cite{genpseudo}).

\section{Point values}\label{sec8}

Although a definition of point values of distributions was introduced by S.~\L ojasiewicz in \cite{Lojasiewicz}, an arbitrary distribution need not have a point value at every point and it is not possible to characterize distributions by their point values.

An important feature of Colombeau algebras is the availability of such a point value characterization (\cite{ObeChar}). This concept was extended to the case of the special Colombeau algebra on manifolds in \cite{foundgeom} and to point values of generalized vector bundle homomorphisms in \cite{Kgenmf}. We will give the corresponding definitions in our setting and show the usual desired properties. Note in particular that a point value characterization in full diffeomorphism invariant Colombeau spaces of generalized functions is considerable more complicated to attain than in the special algebra because one has to construct suitable bump functions on spaces like $\cD(\bR^n)$, as is evidenced first in \cite{punktwerte}.

For applications of point values in Colombeau algebras we refer, e.g., to \cite{GroupAnalysis, DiscontCalculus, LocalProperties}. Furthermore, we point out that a point value characterization for full Colombeau algebras on manifolds is only available by our approach based on smoothing operators (or smoothing kernels) as parameters in the basic space; the more restrictive setting of \cite{global} is not able to accomodate point values in a coordinate-invariant way, as is seen by comparing this section with \cite{punktwerte}.

\begin{definition}\label{defgenvect}
 Let $\bE$ be a finite dimensional vector space over $\bK$ with norm $\norm{\cdot}$. We define
\begin{align*}
 \Gvb{\bE}{\Delta} &\coleq C^\infty(\VSO{\Delta}, \bE), \\
 \Gvm{\bE}{\Delta} &\coleq \{\, X \in \bE^\Delta : \forall k \in \bN_0\ \forall (\Phi_\e)_\e \in S(\Delta), (\Psi_{1,\e})_\e,\dotsc,(\Psi_{k,\e})_\e \in S^0(\Delta)\ \exists N \in \bN:\\
 & \qquad\qquad\qquad\qquad \norm{(\ud ^k R)(\Phi_\e)(\Psi_{1,\e}, \dotsc, \Psi_{k,\e})} = O(\e^{-N})\ \}, \\
 \Gvn{\bE}{\Delta} &\coleq
\{\, X \in \bE^\Delta: \forall k \in \bN_0\ \forall (\Phi_\e)_\e \in S(\Delta), (\Psi_{1,\e})_\e,\dotsc,(\Psi_{k,\e})_\e \in S^0(\Delta)\ \forall m \in \bN: \\
& \qquad\qquad\qquad\qquad \norm{(\ud ^k R)(\Phi_\e)(\Psi_{1,\e},\dotsc,\Psi_{k,\e})} = O(\e^m)\ \}, \\
 \Gvq{\bE}{\Delta}&\coleq \Gvm{\bE}{\Delta} / \Gvn{\bE}{\Delta}.
\end{align*}
Elements of $\Gvq{\bE}{\Delta}$ are called \emph{generalized $\bE$-vectors}, elements of $\Gvq{\bK}{\Delta}$ \emph{generalized numbers}. We abbreviate $R_\e \coleq R(\Phi_\e)$.
\end{definition}

$\Gvq{\bK}{\Delta}$ is a commutative ring and $\Gvq{\bE}{\Delta}$ a $\Gvq{\bK}{\Delta}$-module. The algebraic properties of Colombeau generalized numbers have been studied in various settings before, see for example \cite{zbMATH06264384,1241.46024,zbMATH05885461,zbMATH05768499}; it is expected that analogous results can be obtained in our setting also (barring the fact that smooth dependence on elements of $\VSO{\Delta}$ can lead to some technical complications).

The following is a variant of Theorem \ref{nonegder} for generalized vectors and can be proven analogously.

\begin{lemma}\label{nonegderpoints}
 If $X \in \Gvm{\bE}{\Delta}$ then $X \in \Gvn{\bE}{\Delta}$ if and only if $\forall m \in \bN$ $\forall (\Phi_\e)_\e \in \TOv{\Delta}$: $\abso{R_\e} = O(\e^m)$.
\end{lemma}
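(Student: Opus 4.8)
The plan is to mirror the proof of Theorem \ref{nonegder} (negligibility without derivatives) in the setting of generalized $\bE$-vectors, where the situation is in fact simpler because there are no points $x \in M$ and no Riemannian norm on a bundle to worry about — the target $\bE$ is a fixed finite-dimensional normed space. The only nontrivial direction is: if $X \in \Gvm{\bE}{\Delta}$ satisfies $\abso{R_\e} = O(\e^m)$ for all $m$ and all $(\Phi_\e)_\e \in \TOv\Delta$, then all higher differentials $(\ud^k R)(\Phi_\e)(\Psi_{1,\e},\dotsc,\Psi_{k,\e})$ are $O(\e^m)$ as well; the converse is immediate by taking $k=0$.

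First I would reduce to the scalar-valued case: choosing a basis of $\bE$ (or, cleanly, testing against the finitely many coordinate functionals, which are bounded linear and hence smooth), it suffices to treat $\bE = \bK$, i.e.\ to show that the result for $\cE^\Delta(M)$ restricts correctly. More to the point, the argument of Theorem \ref{nonegder} is a pointwise (in $x$) statement whose proof already isolates the dependence on $\Phi_\e$ and the $\Psi_{j,\e}$; since $\Gvb{\bE}{\Delta} = C^\infty(\VSO\Delta,\bE)$ is literally the ``no base point'' version of $\cE^\Delta(M) = C^\infty(\VSO\Delta,\Gamma(M\times\bK))$, one can either invoke Theorem \ref{nonegder} for the trivial bundle $M\times\bK$ with $M$ a point, or, better, transcribe its proof verbatim. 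The heart of that proof is the trick of promoting a $0$-test object $(\Psi_\e)_\e$ that enters as a differentiation direction into the ``base point'' slot: one writes, by the fundamental theorem of calculus / mean value theorem in $\VSO\Delta$ along the affine segment from $\Phi_\e$ to $\Phi_\e + \e^{-M}\Psi_\e$ (which lies in the relevant $c^\infty$-open set for $\e$ small since $\csn$-estimates on test objects are polynomial in $\e^{-1}$), that $R(\Phi_\e + \e^{-M}\Psi_\e)$ is a value of $R$ at a genuine test object — using that test objects are stable under such perturbations up to the required orders — and Taylor-expanding in the parameter recovers $(\ud R)(\Phi_\e)(\Psi_\e)$ as a linear combination of such values up to an $O(\e^{\text{large}})$ remainder controlled by moderateness of $\ud^2 R$.

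Concretely, the key steps in order are: (1) fix $k$ and $m$, fix $(\Phi_\e)_\e \in \TOv\Delta$ and $(\Psi_{1,\e})_\e,\dotsc,(\Psi_{k,\e})_\e \in \TOvz\Delta$, and by the polarization remark reduce to $\Psi_{1,\e}=\dotsb=\Psi_{k,\e}=\Psi_\e$; (2) for a parameter $t$ ranging over a suitable finite set of values of size $\sim \e^{-M}$ (with $M$ chosen from the moderateness bound on $\ud^{k+1}R$), observe that $(\Phi_\e + t\,\e^{-M}\,\e^{?}\Psi_\e)_\e$ is again a test object — this is where one checks (VSO1)--(VSO4) are preserved, which follows because adding an $O(\e^{m'})$-small (in every seminorm, for $m'$ as large as we like) multiple of a $0$-test object changes nothing in the leading behaviour; (3) apply the hypothesis $\abso{R(\cdot)} = O(\e^m)$ at each of these finitely many test objects; (4) use a finite-difference / Taylor formula of order $k$ in $t$ together with the moderateness estimate $\abso{(\ud^{k+1}R)(\Phi_\e)(\ldots)} = O(\e^{-N})$ for the remainder, and solve the resulting linear system (Vandermonde) for $(\ud^k R)(\Phi_\e)(\Psi_\e,\dotsc,\Psi_\e)$, obtaining the desired $O(\e^{m'})$ bound for arbitrarily large $m'$.

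The main obstacle is step (2): one must verify carefully that the perturbed net stays inside $\TOv\Delta$ — in particular that conditions (VSO1) (local vanishing near each point up to scale $r$) and (VSO3) (polynomial moderateness of all seminorms) survive, and that the scaling exponents are chosen consistently so that the perturbation is simultaneously (a) large enough that the $k$-th divided difference genuinely sees $\ud^k R$ and (b) small enough to be absorbed into the $0$-test-object/negligibility bookkeeping. This is exactly the delicate point already handled in the proof of Theorem \ref{nonegder} in \cite{bigone}, so the honest statement is that the proof proceeds ``analogously'' — the remark preceding the lemma is precisely the signal that one should cite or copy that argument rather than redo it — but one should double-check that the absence of the base-manifold variable (and hence the absence of (VSO1)-type localization issues at the level of the estimate being proved) only simplifies matters and introduces nothing new.
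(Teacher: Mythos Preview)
Your proposal is correct and matches the paper's approach exactly: the paper gives no proof at all beyond the sentence ``can be proved analogously'' to Theorem~\ref{nonegder}, and your sketch is precisely a faithful unpacking of what that analogous argument consists of (polarization, perturbing the test object by a scaled $0$-test object, Taylor expansion with the moderateness bound controlling the remainder). Your closing remark that one should simply cite or transcribe the proof of Theorem~\ref{nonegder} is in fact all the paper does.
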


We will now define generalized points of manifolds and vector bundles. From now on we will use the notation $\abso{f_\e} = O(g_\e)$ also if $f_\e$ is only defined for $\e$ in a specified subset of $(0,1]$.

\begin{definition}\label{def_genpoint}
\begin{enumerate}[label=(\alph*)]
 \item Let $M^\Delta \coleq C^\infty(\VSO{\Delta}, M)$. By $M^\Delta_c$ we denote the set of all $X \in M^\Delta$ such that $(\exists L \csub M)$ $(\forall (\Phi_\e)_\e \in \TOv{\Delta})$ $(\exists \e_0>0)$ $(\forall \e < \e_0)$: $X_\e \in L$.

\item $X \in M_c^\Delta$ is called \emph{moderate} if $(\forall k \in \bN_0)$ $(\forall (U,\varphi) \in \cA_M)$ $(\forall L \csub U)$ $(\forall (\Phi_\e)_\e \in S(\Delta)$, $(\Psi_{1,\e})_\e$, $\dotsc$, $(\Psi_{k,\e})_\e \in S^0(\Delta))$ $(\exists N \in \bN)$:
\[ \norm{ \ud ^k (\varphi \circ X)(\Phi_\e)(\Psi_{1,\e}, \dotsc, \Psi_{k,\e})} = O(\e^{-N}) \]
for all $\e$ such that $X_\e \in L$. The set of moderate elements of $M^\Delta_c$ is denoted by $M^\Delta_{c,\cM}$.

\item \label{def_genpoint_neg}Given $X,Y \in M_{c,\cM}^\Delta$ we say that $X$ and $Y$ are equivalent, written $X \sim Y$, if
\begin{enumerate}[label=(\roman*)]
 \item \label{def_genpoint_neg.1} $(\forall h \in \Riem(M))$ $(\forall (\Phi_\e)_\e \in S(\Delta))$: $d_h ( X_\e, Y_\e) \to 0$,
 \item \label{def_genpoint_neg.2} $(\forall k \in \bN_0)$ $(\forall (U,\varphi) \in \cA_M)$ $(\forall L \csub U)$ $(\forall (\Phi_\e)_\e \in S(\Delta),(\Psi_{1,\e})_\e, \dotsc, (\Psi_{k,\e})_\e \in S^0(\Delta))$ $(\forall m \in \bN)$:
\[ \norm{ \ud ^k ( \varphi \circ X - \varphi \circ Y ) ( \Phi_\e) ( \Psi_{1,\e}, \dotsc, \Psi_{k,\e} ) } = O(\e^m) \]
for all $\e$ such that $X_\e \in L$ and $Y_\e \in L$.
\end{enumerate}
If \ref{def_genpoint_neg.2} holds only for $k=0$ we write $X \sim_0 Y$ (equivalence of order 0).
\item We set $\widetilde M^\Delta_c \coleq M^\Delta_{c,\cM} / {\sim}$ and call its elements \emph{compactly supported generalized points} of $M$.
\end{enumerate}
\end{definition}

Note that in Definition \ref{def_genpoint} \ref{def_genpoint_neg} \ref{def_genpoint_neg.1} one can replace $(\forall h \in \Riem(M))$ by $(\exists h \in \Riem(M))$. 
Again, because the $k$th differentials are symmetric we can take all $(\Psi_{1,\e})_\e \dotsc (\Psi_{k,\e})_\e$ to be equal. We have the following coordiante-free characterization of 0-equivalence:

\begin{proposition}\label{pnegcharm}
Let $X,Y \in M^\Delta_{c,\cM}$. Then the following are equivalent:
 \begin{enumerate}[label=(\alph*)]
  \item\label{pnegcharm.2} $X \sim_0 Y$;
  \item\label{pnegcharm.3} $(\forall h \in \Riem(M))$ $(\forall (\Phi_\e)_\e \in S(\Delta))$ $(\forall m \in \bN)$: $d_h(X_\e, Y_\e) = O(\e^m)$.
\end{enumerate}
\end{proposition}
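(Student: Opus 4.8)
The plan is to imitate, almost verbatim, the proof of Theorem \ref{charnegcoordfree}: a compactly supported generalized point of $M$ is nothing but a compactly bounded generalized function from the one-point manifold into $M$ (there are no $\ud_2$-derivatives and no chart on a point), so Proposition \ref{pnegcharm} is really the special case of Theorem \ref{charnegcoordfree} with a point as source. Rather than making this identification formal I would argue directly, using throughout the standard dictionary between Riemannian distances and coordinate norms: by Lemma \ref{kartenschaetz} one covers any compact $K \csub M$ by finitely many ``adapted'' charts $(W_j, \psi_j)$ on which a suitable $g_j \in \Riem(M)$ satisfies $d_{g_j}(p,q) = \norm{\psi_j(p) - \psi_j(q)}$; on compact sets all these metrics are comparable to any given one by Lemma \ref{riemequiv}, and all transition maps are locally Lipschitz by Lemma \ref{diffestimate}.

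For \ref{pnegcharm.2} $\Rightarrow$ \ref{pnegcharm.3} fix $h \in \Riem(M)$, $(\Phi_\e)_\e \in S(\Delta)$, $m \in \bN$. Since $X,Y \in M^\Delta_{c,\cM}$ there is $K \csub M$ with $X_\e, Y_\e \in K$ for small $\e$, and by \ref{def_genpoint_neg.1} we have $d_h(X_\e, Y_\e) \to 0$. Cover $K$ by finitely many charts $(U_i, \varphi_i) \in \cA_M$ with $K \subseteq \bigcup_i L_i^\circ$, $L_i \csub U_i$, and by finitely many adapted charts $(W_j, \psi_j)$ with $K \subseteq \bigcup_j W_j^\circ$; let $\delta>0$ be a common Lebesgue number of both covers. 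For all small $\e$ the pair $X_\e, Y_\e$ then lies in a single $L_{i(\e)}$ and a single $W_{j(\e)}$, and
\[ d_h(X_\e, Y_\e) \le C\, d_{g_{j(\e)}}(X_\e, Y_\e) = C\, \norm{\psi_{j(\e)}(X_\e) - \psi_{j(\e)}(Y_\e)} \le C'\,\norm{\varphi_{i(\e)}(X_\e) - \varphi_{i(\e)}(Y_\e)} = O(\e^m), \]
where $C$ bounds the finitely many Lemma \ref{riemequiv} constants comparing $h$ with the $g_j$ on $K$, $C'$ the finitely many Lipschitz constants of the $\psi_j \circ \varphi_i^{-1}$ (Lemma \ref{diffestimate}), and the last estimate is \ref{def_genpoint_neg.2} with $k=0$ for the chart $\varphi_{i(\e)}$ and compact $L_{i(\e)}$.

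For \ref{pnegcharm.3} $\Rightarrow$ \ref{pnegcharm.2}: condition \ref{def_genpoint_neg.1} is immediate from \ref{pnegcharm.3} with $m=1$. For \ref{def_genpoint_neg.2} with $k=0$ fix $(U,\varphi) \in \cA_M$, $L \csub U$, $m \in \bN$; cover $L$ by finitely many adapted charts $(W_j, \psi_j)$ with metrics $g_j$ and let $\delta$ be a Lebesgue number. Fixing any $h$, the already proved \ref{def_genpoint_neg.1} gives $d_h(X_\e, Y_\e) \to 0$, so for small $\e$ with $X_\e, Y_\e \in L$ both points lie in one $W_{j(\e)}$, and applying \ref{pnegcharm.3} with the metric $g_{j(\e)}$,
\[ \norm{\varphi(X_\e) - \varphi(Y_\e)} = \norm{(\varphi \circ \psi_{j(\e)}^{-1})(\psi_{j(\e)}(X_\e)) - (\varphi \circ \psi_{j(\e)}^{-1})(\psi_{j(\e)}(Y_\e))} \le C\, d_{g_{j(\e)}}(X_\e, Y_\e) = O(\e^m), \]
with $C$ uniform over the finitely many charts; this is exactly \ref{def_genpoint_neg.2} for $k=0$. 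Since \ref{pnegcharm.3} for one metric already yields \ref{def_genpoint_neg.1}, hence by this argument \ref{pnegcharm.3} for every metric, the statement is unchanged if $(\forall h \in \Riem(M))$ in \ref{pnegcharm.3} is weakened to $(\exists h \in \Riem(M))$.

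The only genuinely delicate step is the uniform-in-$\e$ reduction to a single coordinate patch: c-boundedness confines $X_\e$ and $Y_\e$ to one fixed compact set, and the condition $d_h(X_\e,Y_\e)\to 0$ — which is part of \ref{pnegcharm.2} and a consequence of \ref{pnegcharm.3} — lets the Lebesgue number lemma force both points into one member of a finite subcover for all small $\e$; finiteness of the covers then keeps all constants uniform. Everything else is the routine comparison of Riemannian distances and coordinate norms via Lemmas \ref{riemequiv}, \ref{diffestimate} and \ref{kartenschaetz}.
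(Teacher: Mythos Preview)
Your proposal is correct and follows essentially the same approach as the paper, which simply states ``This is seen exactly as in the proof of Theorem \ref{charnegcoordfree}''; you have unpacked that reference, specializing to the case of a one-point source (so no $\ud_2$-derivatives and no source chart), and spelled out the localization via finite covers and a Lebesgue number that the paper's proof of Theorem \ref{charnegcoordfree} leaves implicit in the phrase ``one can clearly assume''.
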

\begin{proof}
This is seen exactly as in the proof of Theorem \ref{charnegcoordfree}.
\end{proof}

In the following, let $\pi_E \colon E \to M$ be a vector bundle with typical fiber $\bE$ (a finite dimensional vector space over $\bK$) and trivializing covering $\fS_E$. For $X \in C^\infty(\VSO{\Delta}, E)$ and $(U, \tau) \in \fS_E$, $\widearrow X_\tau \coleq \pr_2 \circ \tau \circ X$ denotes the vector part of $X$ with respect to $\tau$ where it is defined.

\begin{definition}
\begin{enumerate}[label=(\alph*)]
 \item Let $E^\Delta \coleq C^\infty(\VSO{\Delta}, E)$. For $X \in E^\Delta$ we set $\underline{X} \coleq \pi_E \circ X \in M^\Delta$.

\item We define the set $\GVBm{E}$ of \emph{moderate generalized vector bundle points} of $E$ as the set of all $X  \in E^\Delta$ such that
\begin{enumerate}[label=(\roman*)]
 \item $\underline{X} \in \GPm{M}$,
 \item $(\forall k \in \bN_0)$ $(\forall (U,\tau) \in \fS_E)$ $(\forall L \csub U)$ $(\forall (\Phi_\e)_\e \in \TOv{\Delta}, (\Psi_{1,\e})_\e \dotsc (\Psi_{k,\e})_\e \in \TOvz{\Delta})$ $(\exists N \in \bN)$:
\[ \norm { \ud ^k \widearrow X_\tau (\Phi_\e)(\Psi_{1,\e}, \dotsc, \Psi_{k,\e} ) } = O(\e^{-N}) \]
for all $\e$ such that $\underline{X}_\e \in L$.
\end{enumerate}
\item We say that $X, Y \in \GVBm{E}$ are \emph{vector bundle equivalent}, written $X \sim_{vb} Y$, if
\begin{enumerate}[label=(\roman*)]
 \item\label{vbpneg.1} $\underline{X} \sim \underline{Y}$,
 \item\label{vbpneg.2} $(\forall k \in \bN_0)$ $(\forall (\Phi_\e)_\e \in \TOv{\Delta}, (\Psi_{1,\e})_\e,\dotsc,(\Psi_{k,\e})_\e \in \TOvz{\Delta})$ $(\forall (U,\tau) \in \fS_E)$ $(\forall L \csub U)$ $(\forall m \in \bN)$:
\[ \norm { \ud^k (\widearrow X_\tau - \widearrow Y_\tau) ( \Phi_\e) ( \Psi_{1,\e}, \dotsc, \Psi_{k,\e} ) } = O(\e^m) \]
for all $\e$ such that $\underline{X}_\e, \underline{Y}_\e \in L$.
\end{enumerate}
If \ref{vbpneg.2} holds only for $k=0$ we write $X \sim_{vb0} Y$.
\item We set $\GVBq{E} \coleq \GVBm{E} / {\sim_{vb}}$ and call its elements \emph{generalized vector bundle points} of $E$.
\end{enumerate}
\end{definition}

\begin{lemma}If $E$ is trivial, $R \in \cE^\Delta(E)$ has components $R^i \in \cE^\Delta(M)$ ($i=1\dotsc \dim E$) and $R$ is moderate or negligible if and only if all $R^i$ are so.
 
Moreover, in this situation any generalized point $X \in E^\Delta = C^\infty(\VSO{\Delta}, M \times \bE)$ can be written as $X = (\pr_1 \circ X, \pr_2 \circ X) \eqcol (\underline{X}, \widearrow X)$ with $\widearrow X$ having components $\widearrow X^i \in \bK^\Delta$. $X$ is moderate if and only if $\underline{X}$ and $\widearrow X$ (or all $\widearrow X^i$) are so; $X \sim_{vb} Y$ if and only if $\underline{X} \sim \underline{Y}$ and $\widearrow X - \widearrow Y \in \bE^\Delta_\cN$ (or $\widearrow X^i - \widearrow Y^i \in \bK^\Delta_\cN$ for all $i$).
\end{lemma}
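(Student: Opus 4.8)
The plan is to exploit that a trivial bundle $E = M\times\bE$, with a fixed basis $(e_i)_{i=1}^{\dim\bE}$ of $\bE$, makes everything split into finitely many scalar pieces, so that the scalar statements apply componentwise. Two standard facts enter: differentiation in the sense of convenient calculus commutes with bounded linear maps (\cite[3.18, p.~33]{KM}), and a continuous seminorm on a finite product of locally convex spaces is dominated by a constant times the maximum of continuous seminorms on the factors, while each factor seminorm precomposed with a projection is continuous on the product. For the claim about $R\in\cE^\Delta(E)$: the basis gives topological isomorphisms $\Gamma(E) = C^\infty(M,\bE)\cong C^\infty(M)^{\dim\bE}$, hence $\cE^\Delta(E) = C^\infty(\VSO\Delta,\Gamma(E))\cong\cE^\Delta(M)^{\dim\bE}$, under which $R$ corresponds to the tuple of its components $(R^i)_i$, and — since taking the $i$-th component is a bounded linear map $\Gamma(E)\to C^\infty(M)$ — $\ud^k R(\Phi_\e)(\Psi_{1,\e},\dots,\Psi_{k,\e})$ corresponds to the tuple of the $\ud^k R^i(\Phi_\e)(\Psi_{1,\e},\dots,\Psi_{k,\e})$. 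Applying the seminorm fact to $\Gamma(E)$ then turns the moderateness (resp. negligibility) estimates of Definition \ref{scalmodneg} for $R$ into the same estimates for all of the $R^i$; for moderateness one sets $N = \max_i N_i$, legitimate because $\dim\bE<\infty$, and negligibility is immediate in both directions (Theorem \ref{nonegder} would shorten this but is not needed).

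For generalized points, $\pr_1\colon M\times\bE\to M$ and $\pr_2\colon M\times\bE\to\bE$ are smooth, so a smooth map $X\in E^\Delta = C^\infty(\VSO\Delta,M\times\bE)$ is the same as the pair $(\underline X,\widearrow X)$ with $\underline X = \pr_1\circ X\in M^\Delta$ and $\widearrow X = \pr_2\circ X\in C^\infty(\VSO\Delta,\bE) = \bE^\Delta$, whose coordinates in $(e_i)_i$ are $\widearrow X^i\in\bK^\Delta$. Taking the global trivialization $\tau=\id$ and atlas charts of the form $(U\times\bE,\varphi\times\id)$ on $M\times\bE$, the two defining conditions of $\GVBm E$ for $X$ become exactly: $\underline X\in\GPm M$, and $\norm{\ud^k\widearrow X(\Phi_\e)(\Psi_{1,\e},\dots,\Psi_{k,\e})} = O(\e^{-N})$ for all $\e$ such that $\underline X_\e$ lies in a prescribed $L\csub M$. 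I would then observe that the latter condition, ranging over all such $L$, is equivalent to $\widearrow X\in\bE^\Delta_\cM$: $\widearrow X$ itself does not involve $L$, and $\underline X$ is c-bounded, so for each fixed family of test objects one has $\underline X_\e\in L_0$ for all small $\e$ and some $L_0\csub M$; the estimate with $L = L_0$, after absorbing the finitely many remaining $\e$ into the constant, holds on all of $I$, and the converse is trivial. Finally $\widearrow X\in\bE^\Delta_\cM$ iff all $\widearrow X^i\in\bK^\Delta_\cM$, by equivalence of the norm on $\bE$ with the maximum norm in coordinates; the same reasoning with $\bE^\Delta_\cN$ and $O(\e^m)$ in place of $\bE^\Delta_\cM$ and $O(\e^{-N})$ (and Lemma \ref{nonegderpoints} if one wishes to reduce to $k=0$) settles the remaining equivalences for the components and for $X\sim_{vb}Y$, whose condition \ref{vbpneg.1} is literally $\underline X\sim\underline Y$ and whose condition \ref{vbpneg.2} with $\tau=\id$ reduces, again by c-boundedness of $\underline X,\underline Y$, to $\widearrow X-\widearrow Y\in\bE^\Delta_\cN$.

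I do not expect a genuine obstacle, since the lemma is essentially bookkeeping; the points needing care are the two reductions just described: the equivalence between seminorm estimates on $\Gamma(E)$ (resp. norm estimates on $\bE$) and their componentwise versions, and the c-boundedness trick that removes the qualifier ``for $\e$ such that $\underline X_\e\in L$'' in favour of an estimate valid on all of $I$.
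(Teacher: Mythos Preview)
The paper states this lemma without proof, treating it as evident bookkeeping; your proposal supplies a correct and appropriately detailed verification of exactly the sort the paper tacitly relies on. One small wording issue: the phrase ``after absorbing the finitely many remaining $\e$ into the constant'' is not quite right, since $[\e_0,1]$ contains uncountably many $\e$; the actual point is that the $O(\e^{-N})$ (resp.\ $O(\e^m)$) relation only concerns small $\e$, so once c-boundedness gives $\underline X_\e\in L_0$ for all $\e<\e_0$, the estimate over $\{\e:\underline X_\e\in L_0\}$ already yields the required asymptotic bound without any further absorption.
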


\begin{definition}We define point evaluation mappings
\begin{align*}
 \cE^\Delta(M) \times \GPb{M} & \to \bK^\Delta \\
 \cE^\Delta(E) \times \GPb{M} & \to \GVBb{E} \\
 \cE^\Delta[M,N] \times \GPb{M} &\to \GPb{N} \\
\intertext{by $R(X)(\Phi) \coleq R(\Phi)(X(\Phi))$ and }
 \cE^{\Delta,h}[M,F] \times \GPb{M} &\to \GVBb{F} \\
 \Hom^\Delta[E,F] \times \GVBb{E} &\to \GVBb{F}
\end{align*}
by $R(X)(\Phi) \coleq R(\Phi, X(\Phi))$. $R(X)$ is called the \emph{point value} of $R$ at $X$.
\end{definition}

In all of these cases, point evaluation respects moderateness and equivalence. We first show this for smooth mappings:
\begin{proposition}\label{smoothpointev}
 \begin{enumerate}[label=(\roman*)]
  \item\label{smoothpointev.1} Let $X \in \GPb{M}$ and $f \in C^\infty(M,N)$. If $X \in \GPc{M}$ then $f(X) \in \GPc{N}$; if $X \in \GPm{M}$ then $f(X) \in \GPm{N}$; For $X,Y \in \GPm{M}$ with $X \sim Y$, $f(X) \sim f(Y)$.
  \item\label{smoothpointev.2} Let $X \in \GVBb{E}$ and $f \in \Hom(E,F)$. If $X \in \GVBm{E}$ then $f(X) \in \GVBm{F}$. If $X,Y \in \GVBm{E}$ with $X \sim_{vb} Y$, then $f(X) \sim_{vb} f(Y)$.
 \end{enumerate}
\end{proposition}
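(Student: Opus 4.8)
The plan is to reduce everything to the scalar case treated in Definition \ref{defgenvect} and the point-value statement for manifold-valued functions, exactly as the paper has been doing throughout. For \ref{smoothpointev.1}, the first task is c-boundedness: if $X \in \GPc{M}$ with supporting compact set $L \csub M$, then $f(X)_\e = f(X_\e) \in f(L)$, and $f(L)$ is compact in $N$, so $f(X) \in \GPc{N}$. For moderateness and equivalence I would fix a chart $(V,\psi) \in \cA_N$ and a compact set $L' \csub V$; using c-boundedness of $f(X)$ we may restrict attention to those $\e$ with $f(X)_\e \in L'$, and correspondingly those $X_\e$ lie in the compact set $L \coleq f^{-1}(L') \cap (\text{supp }X) $. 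Cover $L$ by finitely many charts of $\cA_M$; then locally $\psi \circ f \circ \varphi^{-1}$ is a smooth map between open subsets of Euclidean space, and the chain rule (\cite[3.18, p.~33]{KM}) expresses $\ud^k(\psi \circ f(X))(\Phi_\e)(\Psi_{1,\e},\dotsc,\Psi_{k,\e})$ as a finite sum of terms, each a product of a partial differential of $\psi \circ f \circ \varphi^{-1}$ evaluated at $(\varphi \circ X)(\Phi_\e)$ (bounded on the compact neighborhood of $L$ in the chart, uniformly in $\e$, since $X$ is c-bounded) with differentials $\ud^{i}(\varphi \circ X)(\Phi_\e)(\dots)$ for $i \le k$. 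By moderateness of $X$ these factors are $O(\e^{-N})$, giving the desired $O(\e^{-N})$ bound, which is moderateness of $f(X)$.

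For the equivalence assertion in \ref{smoothpointev.1}, condition \ref{def_genpoint_neg} \ref{def_genpoint_neg.1} is immediate: $d_h(f(X_\e), f(Y_\e)) \le C\, d_{f^*h}(X_\e, Y_\e) \to 0$ on the relevant compact set by Lemma \ref{riemequiv} (equivalently, work in a chart and use Lemma \ref{diffestimate}). For \ref{def_genpoint_neg.2}, fix a chart $(V,\psi)$ and $L' \csub V$; restricting to $\e$ with $X_\e, Y_\e$ in a common compact $L$ mapping into $L'$, I write $(\psi \circ f)(X_\e) - (\psi\circ f)(Y_\e)$ and apply Lemma \ref{diffestimate} in the $k=0$ case, and for $k \ge 1$ differentiate via the chain rule and compare term by term: each term is a product, and the difference of two products telescopes into a sum where each summand contains either a factor $\ud^i(\varphi\circ X - \varphi\circ Y)(\Phi_\e)(\dots) = O(\e^m)$ (by $X \sim Y$) or a factor $(\partial^\alpha(\psi\circ f\circ\varphi^{-1}))(\varphi\circ X_\e) - (\partial^\alpha(\psi\circ f\circ\varphi^{-1}))(\varphi\circ Y_\e)$, which is $O(\e^m)$ again by Lemma \ref{diffestimate} applied to $\partial^\alpha(\psi\circ f\circ\varphi^{-1})$ together with $X \sim Y$; all remaining factors are bounded uniformly in $\e$ by moderateness and c-boundedness. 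This yields $f(X)\sim f(Y)$.

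Part \ref{smoothpointev.2} follows by the now-standard reduction to trivial bundles. First I treat $\underline{f(X)} = \underline{f}(\underline{X})$, which is handled by \ref{smoothpointev.1}. For the vector part, choose $E'$, $F'$ with $E\oplus E'$, $F\oplus F'$ trivial and set $P \coleq \iota_F \circ f \circ p_E \in \Hom(E\oplus E', F\oplus F')$; since moderateness and vb-equivalence of $X$ pass through the canonical projections and injections (the vector-part estimates are just the Euclidean components), it suffices to prove the claim for $P$, i.e. for a homomorphism between trivial bundles. There, writing $X = (\underline{X}, \widearrow X)$ and $f(X) = (\underline{f}(\underline{X}), \widearrow f_{\sigma\tau}(\underline{X})\cdot \widearrow X)$ in trivializations, the vector components of $f(X)$ are finite sums $\sum_j (\widearrow f_{\sigma\tau})^{ij}(\underline{X}_\e)\, \widearrow X^j_\e$; differentiating in $\Phi$ via the chain rule produces products of derivatives of the fixed smooth function $\widearrow f_{\sigma\tau}\circ\varphi^{-1}$ (bounded on compacta, uniformly in $\e$, by c-boundedness of $\underline{X}$) with $\ud^i(\varphi\circ\underline{X})(\Phi_\e)(\dots)$ and with $\ud^i \widearrow X^j(\Phi_\e)(\dots)$, all moderate; hence $f(X)\in\GVBm{F}$. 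For vb-equivalence one telescopes exactly as in part \ref{smoothpointev.1}, using $\underline{X}\sim\underline{Y}$, $\widearrow X - \widearrow Y \in \bE^\Delta_\cN$, and Lemma \ref{diffestimate} for the derivatives of $\widearrow f_{\sigma\tau}\circ\varphi^{-1}$. The main obstacle is purely bookkeeping: organizing the chain-rule expansion and the telescoping of products so that in every summand of a difference exactly one ``small'' ($O(\e^m)$) factor is isolated while the rest stay uniformly bounded; once the compact sets are correctly chosen via c-boundedness, no genuinely new estimate beyond Lemmas \ref{riemequiv}, \ref{diffestimate} and the scalar moderateness/negligibility definitions is needed.
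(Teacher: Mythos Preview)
Your argument is correct. Part \ref{smoothpointev.1} follows exactly the paper's route---chain rule applied to $\psi \circ f(X) = (\psi \circ f \circ \varphi^{-1}) \circ (\varphi \circ X)$---only spelled out in much greater detail; the paper dispatches both moderateness and equivalence in a single sentence each.

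For part \ref{smoothpointev.2} you take a different path. The paper does \emph{not} reduce to trivial bundles here: it works directly with local trivializations $\tau$ of $E$ and $\sigma$ of $F$, writes $\widearrow{f(X)}_\sigma(\Phi) = \widearrow f_{\sigma\tau}(\underline{X}(\Phi)) \cdot \widearrow X_\tau(\Phi)$, and applies the chain rule immediately. Your detour through $E \oplus E'$ and $F \oplus F'$ is valid---the canonical injections and projections do preserve moderateness and vb-equivalence, essentially because in an adapted trivialization $\tau \oplus \tau'$ one has $\widearrow{i_E(X)}_{\tau\oplus\tau'} = (\widearrow X_\tau, 0)$---but it is overhead: once you arrive at the trivial case you perform precisely the same local computation the paper does from the outset. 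The direct approach is shorter and avoids the need to justify that $i_E$, $p_F$ behave well, which in your write-up is only asserted parenthetically. Your route has the mild advantage of structural uniformity with Sections \ref{sec5}--\ref{sec7}, where the direct-sum trick is genuinely needed; here it is not.
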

\begin{proof}
\ref{smoothpointev.1} Preservation of c-boundedness is clear. Moderateness of $\ud^j ( f \circ X)(\Phi_\e)(\Psi_\e, \dotsc, \Psi_\e)$ is seen by applying the chain rule to $f \circ X = (f \circ \varphi^{-1}) \circ (\varphi \circ X)$ for a suitable chart $\varphi$, and similarly for equivalence.

\ref{smoothpointev.2} $\underline{f(X)} = \underline{f} ( \underline{X} )$ is moderate by \ref{smoothpointev.1}. Moderateness of $\ud^j \widearrow{ f(X) }_\sigma(\Phi_\e)(\Psi_\e, \dotsc, \Psi_\e)$ is seen by applying the chain rule to
$\widearrow { f(X) }_\sigma ( \Phi) = \widearrow f_{\sigma \tau}(\underline{X}(\Phi)) \cdot \widearrow X_\tau(\Phi)$ where $\tau,\sigma$ are suitable local trivializations of $E$ and $F$. Similarly, one sees $f(X) \sim_{vb} f(Y)$.
\end{proof}

Moreover, generalized points can be characterized by composition with smooth functions:
\begin{proposition}\label{chargenpt} Let $X \in \GPb{M}$. Then $X$ is moderate if and only if $f(X) \in \bK^\Delta$ is moderate for all $f \in C^\infty(M)$. For $X,Y \in \GPm{M}$, $X \sim Y$ if and only if $f(X) - f(Y) \in \bK^\Delta_\cN$ for all $f \in C^\infty(M)$, which hence is equivalent to $X \sim_0 Y$.
\end{proposition}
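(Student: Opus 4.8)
The statement to prove is Proposition \ref{chargenpt}, a point-value analogue of Theorems \ref{charmod} and \ref{charneg} for generalized points: moderateness and equivalence of elements of $\GPb{M}$ are detected by composition with scalar smooth functions. The plan is to mimic closely the argument already used for manifold-valued generalized functions in Theorems \ref{charmod} and \ref{charneg}, specializing to the (simpler) situation where there is no base-point variable, only the parameter $\Phi \in \VSO\Delta$. The ``only if'' directions are immediate consequences of Proposition \ref{smoothpointev} \ref{smoothpointev.1} applied to $f \in C^\infty(M) = C^\infty(M,\bK)$: if $X$ is moderate then $f(X) \in \bK^\Delta$ is moderate, and if $X \sim Y$ then $f(X) \sim f(Y)$, i.e.\ $f(X) - f(Y) \in \bK^\Delta_\cN$. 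So the content is in the converse implications.

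For ``moderate if $f(X)$ moderate for all $f$'': fix $(U,\varphi) \in \cA_M$ and $L \csub U$, and $k \in \bN_0$; we must bound $\norm{\ud^k(\varphi \circ X)(\Phi_\e)(\Psi_{1,\e},\dotsc,\Psi_{k,\e})}$ for all $\e$ with $X_\e \in L$. As in the proof of Theorem \ref{charmod} \ref{charmod.2}$\Rightarrow$\ref{charmod.1}, choose $f \in C^\infty_0(M)^{m}$ (writing $m = \dim M$, so $f = (f^1,\dots,f^m)$) with $f \equiv \varphi$ on an open neighborhood of $L$; then for $\e$ with $X_\e \in L$ the chain-rule expression for $\ud^k(\varphi \circ X)(\Phi_\e)(\Psi_\e,\dots,\Psi_\e)$ agrees with that for $\ud^k(f \circ X)(\Phi_\e)(\Psi_\e,\dots,\Psi_\e)$, which has components $\ud^k(f^j \circ X)(\Phi_\e)(\Psi_\e,\dots,\Psi_\e)$ — these are moderate by hypothesis since $f^j \in C^\infty(M)$ and $f^j(X) \in \bK^\Delta_\cM$ by Definition \ref{defgenvect}. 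Note that here the norm of the $k$th differential of $\varphi \circ X$ is just the norm of a vector in $\Lin(\bR, \dots, \bR; \bR^m) \cong (\bR^m)^{\otimes\text{-dual}}$, so it is controlled by the finitely many component functions; there is no extra seminorm-of-$C^\infty(M)$ business as in the mapping case, which makes this direction slightly easier. One should first dispatch c-boundedness: if every $f(X)$ is a (moderate, hence in particular well-defined) generalized number one still needs $X \in M^\Delta_c$, which does not follow from scalar composition alone — so c-boundedness must either be part of the hypothesis or, more naturally, the statement is understood for $X \in \GPb{M}$ that are already c-bounded; I would note this explicitly, matching the phrasing ``$X \in \GPb M$'' and the fact that $\GPm M \subseteq \GPc M$ by definition.

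For ``$X \sim Y$ if $f(X) - f(Y) \in \bK^\Delta_\cN$ for all $f$'': this splits into Definition \ref{def_genpoint} \ref{def_genpoint_neg} \ref{def_genpoint_neg.1} and \ref{def_genpoint_neg.2}, handled exactly as in Theorem \ref{charneg} \ref{charneg.4}$\Rightarrow$\ref{charneg.1}. For \ref{def_genpoint_neg.1}, argue by contradiction: if $d_h(X_\e, Y_\e) \not\to 0$ along some net $\Phi_{\e_k}$, then by c-boundedness pass to a subsequence with $X_{\e_k} \to x$, $Y_{\e_k} \to y \neq x$, pick $f \in C^\infty_0(M)$ with $f(x) \neq f(y)$, and observe $\abso{f(X_{\e_k}) - f(Y_{\e_k})} \to \abso{f(x) - f(y)} \neq 0$, contradicting negligibility. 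For \ref{def_genpoint_neg.2}, given $(U,\varphi)$ and $L \csub U$, choose $f \in C^\infty_0(M)^m$ with $f \equiv \varphi$ near $L$, replace $\varphi$ by $f$ in the estimate (valid for $\e$ with $X_\e, Y_\e \in L$), and read off the $O(\e^m)$ decay componentwise from $f^j(X) - f^j(Y) \in \bK^\Delta_\cN$. Finally, the equivalence of $X \sim Y$ with $X \sim_0 Y$ is then automatic: $X \sim Y \Rightarrow X \sim_0 Y$ trivially, and $X \sim_0 Y \Rightarrow f(X) - f(Y) \in \bK^\Delta_\cN$ for all $f$ follows from Proposition \ref{smoothpointev} \ref{smoothpointev.1} restricted to $k=0$ composition (equivalently, by the same contradiction/bump-function argument using only $k=0$), which by what was just shown gives back $X \sim Y$. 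The only mild subtlety — and the ``hard part'' insofar as there is one — is the bookkeeping with the set of $\e$ for which the relevant points lie in $L$ (the estimates are only required there), but this is exactly the convention already fixed before Definition \ref{def_genpoint} and poses no real difficulty; the proof is genuinely a transcription of the earlier arguments with the base manifold collapsed to a point.
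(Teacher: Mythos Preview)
Your proposal is correct and follows exactly the approach the paper takes---the paper's own proof is literally the single sentence ``This is obtained in exactly the same way as Theorems \ref{charmod} and \ref{charneg}.'' Your observation about c-boundedness is apt: as in Theorem \ref{charmod} it should be read as an implicit hypothesis (the statement ``$X \in \GPb{M}$'' is slightly loose in this respect), and with that understood your transcription of the bump-function and contradiction arguments is precisely what is intended.
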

\begin{proof}
 This is obtained in exactly the same way as Theorems \ref{charmod} and \ref{charneg}.
\end{proof}

\begin{proposition}\label{genpoint_mod}
\begin{enumerate}[label=(\alph*)]
 \item\label{genpoint_mod.1} Given $R \in \Gm{M}{\Delta}$ and $X \in \GPm{M}$, $R(X) \in \Gvm{\bK}{\Delta}$. For $R \in \Gn{M}{\Delta}$ we have $R(X) \in \Gvn{\bK}{\Delta}$ and for $X \sim Y$ we have $R(X) - R(Y) \in \Gvn{\bK}{\Delta}$.
 \item \label{genpoint_mod.5} Given $R \in \Gm{E}{\Delta}$ and $X \in \GPm{M}$, $R(X) \in \GVBm{E}$. Furthermore, if $S \in \Gm{E}{\Delta}$ and $Y \in \GPm{M}$ satisfy $R - S \in \Gn{E}{\Delta}$ and $X \sim_0 Y$ then $R(X) \sim_{vb} S(Y)$.
\item\label{genpoint_mod.2} Given $R \in \GMFm{M}{N}$ and $X \in \GPm{M}$, $R(X) \in \GPm{N}$. Moreover, if $S \in \GMFm{M}{N}$ and $Y \in \GPm{M}$ are given with $R \sim S$ and $X \sim Y$, then $R(X) \sim_{vb} S(Y)$.
\item \label{genpoint_mod.4} Given $R \in \GHYBm{M}{F}$ and $X \in \GPb{M}$, $R(X) \in \GVBm{F}$. Moreover, if $S \in \GHYBm{M}{F}$ and $Y \in \GPm{M}$ are given with $R \sim_h S$ and $X \sim Y$, then $R(X) \sim_{vb} S(Y)$.
\item \label{genpoint_mod.3} Given $R \in \GHOMm{E}{F}$ and $X \in \GVBm{E}$, $R(X) \in \GVBm{F}$. Moreover, if $S \in \GHOMm{E}{F}$ and $Y \in \GVBm{E}$ satisfy $R \sim_{vb} Y$ and $X \sim_{vb} Y$, then $R(X) \sim_{vb} S(Y)$.
\end{enumerate}
\end{proposition}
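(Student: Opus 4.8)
The plan is to treat \ref{genpoint_mod.1} and \ref{genpoint_mod.5} directly in local coordinates, and then deduce \ref{genpoint_mod.2}, \ref{genpoint_mod.4}, \ref{genpoint_mod.3} from them by the composition-with-smooth-maps technique already used for Propositions~\ref{fctcomp}, \ref{homcomp} and \ref{hybcomp}.

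For \ref{genpoint_mod.1}: writing $R\in\Gm M\Delta$ via the exponential law as $R^\wedge\in C^\infty(\VSO\Delta\times M)$, we have $R(X)(\Phi)=R^\wedge(\Phi,X(\Phi))$. Since $X$ is c-bounded, for each $(\Phi_\e)_\e$ we have $X_\e\in L$ eventually for some $L\csub M$; covering $L$ by finitely many chart domains we may assume $L\csub U$ for a chart $(U,\varphi)$. Applying the chain rule to $R(X)(\Phi)=(R^\wedge\circ(\id\times\varphi^{-1}))(\Phi,(\varphi\circ X)(\Phi))$ writes $\ud^k(R(X))(\Phi_\e)(\Psi_{1,\e},\dotsc,\Psi_{k,\e})$ as a finite sum of products of factors $\ud_1^a\ud_2^b(R^\wedge\circ(\id\times\varphi^{-1}))(\Phi_\e,\varphi(X_\e))(\cdot)$ and factors $\ud^c(\varphi\circ X)(\Phi_\e)(\cdot)$; the former are $O(\e^{-N})$ because $g\mapsto\sup_{x\in L}\abso{\pd^\beta(g\circ\varphi^{-1})(\varphi(x))}$ is a continuous seminorm on $C^\infty(M)$ and $R\in\Gm M\Delta$, the latter are $O(\e^{-N})$ by moderateness of $X$, so $R(X)\in\Gvm\bK\Delta$. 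For the two negligibility assertions we invoke Lemma~\ref{nonegderpoints}, which for moderate vectors reduces the claim to the bound $\abso{R(X)_\e}=O(\e^m)$, respectively $\abso{R(X)_\e-R(Y)_\e}=O(\e^m)$. The first case is immediate from $R\in\Gn M\Delta$; in the second, Proposition~\ref{pnegcharm} and Lemma~\ref{kartenschaetz} give $\norm{\varphi(X_\e)-\varphi(Y_\e)}=O(\e^m)$ for all $m$, while Lemma~\ref{diffestimate} bounds $\abso{R^\wedge_\e(\varphi(X_\e))-R^\wedge_\e(\varphi(Y_\e))}$ by $C_\e\norm{\varphi(X_\e)-\varphi(Y_\e)}$ with $C_\e=O(\e^{-N})$ by moderateness of $R$, and the product is $O(\e^{m'})$ for all $m'$.

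Part \ref{genpoint_mod.5} is proved the same way, noting $\underline{R(X)}=\pi_E\circ R(X)=X$ so that the base-point requirement is exactly the hypothesis on $X$, and using a trivialisation $(U,\tau)$ of $E$ together with the seminorms $g\mapsto\sup_{x\in L}\norm{\pd^\beta(\widearrow g_\tau\circ(\id\times\varphi^{-1}))(\varphi(x))}$ on $\Gamma(E)$; for the equivalence one splits $R(X)-S(Y)=(R(X)-S(X))+(S(X)-S(Y))$ and uses negligibility of $R-S$ and $X\sim_0 Y$ (which equals $X\sim Y$ by Proposition~\ref{chargenpt}). For \ref{genpoint_mod.2}: c-boundedness of $R(X)$ is clear, since $X_\e\in L\csub M$ forces $R(X)_\e\in L'$ for a compact $L'\csub N$; by Proposition~\ref{chargenpt}, $R(X)\in\GPm N$ iff $f(R(X))=(f\circ R)(X)\in\Gvm\bK\Delta$ for all $f\in C^\infty(N)$, which holds by Theorem~\ref{charmod} (giving $f\circ R\in\Gm M\Delta$) and \ref{genpoint_mod.1}; similarly $R(X)\sim S(Y)$ iff $(f\circ R)(X)-(f\circ S)(Y)\in\Gvn\bK\Delta$ for all $f$, and splitting this as $\bigl((f\circ R)(X)-(f\circ R)(Y)\bigr)+\bigl((f\circ R)(Y)-(f\circ S)(Y)\bigr)$ we use $X\sim Y$ and Theorem~\ref{charneg} ($f\circ R-f\circ S\in\Gn M\Delta$), appealing again to \ref{genpoint_mod.1}. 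Finally, for \ref{genpoint_mod.4} and \ref{genpoint_mod.3} we pick complementary bundles so that $F$ (resp.\ $E$ and $F$) become direct summands of trivial bundles and pass to the trivialised objects by Lemmas~\ref{lemmaB} and \ref{lemmaA} (resp.\ \ref{smoothcomp} and \ref{koord}); in the trivial case $R(X)$ has base map $\underline R(\underline X)$, moderate by \ref{genpoint_mod.2} since $\underline X\in\GPm M$, and vector components $\widearrow R^i(\underline X)$, resp.\ $\sum_j\widearrow R^{ij}(\underline X)\,\widearrow X^j$, to which \ref{genpoint_mod.1} and the ring structure of $\Gvm\bK\Delta$ apply componentwise, the equivalence statements reducing in the same way to negligibility of the component differences.

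The only genuinely delicate point is the $\e$-uniform Lipschitz estimate underlying every equivalence assertion: one must bound $\norm{R_\e(\varphi(X_\e))-R_\e(\varphi(Y_\e))}$ by a constant $O(\e^{-N})$ times $\norm{\varphi(X_\e)-\varphi(Y_\e)}$, and the key is that by Lemma~\ref{diffestimate} this constant can be taken as a supremum of $\norm{R_\e}+\norm{DR_\e}$ over one \emph{fixed} compact neighbourhood of the compact set containing both $X_\e$ and $Y_\e$, hence $O(\e^{-N})$ by moderateness, so that multiplying by $\norm{\varphi(X_\e)-\varphi(Y_\e)}=O(\e^m)$ for every $m$ yields decay faster than every power of $\e$. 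Once this is secured, and once one has checked---as in the lemmas just cited---that passing to a direct-sum trivialisation is compatible with moderateness and (vb-)equivalence, the remainder is routine bookkeeping with the chain rule.
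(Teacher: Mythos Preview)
Your proof is correct and follows essentially the same strategy as the paper: part~\ref{genpoint_mod.1} by chain rule in a chart plus a mean-value/Lipschitz estimate (you cite Lemma~\ref{diffestimate}, the paper writes out the integral explicitly), part~\ref{genpoint_mod.2} via Proposition~\ref{chargenpt} and Theorem~\ref{charmod}, and parts~\ref{genpoint_mod.4},~\ref{genpoint_mod.3} by reduction to trivial bundles. The one minor divergence is in~\ref{genpoint_mod.5}: you argue directly with trivialisations and seminorms on $\Gamma(E)$, whereas the paper also routes this case through a complementary bundle $E'$ with $E\oplus E'$ trivial and then applies~\ref{genpoint_mod.1} componentwise---both are equally short here.
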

\begin{proof}
\ref{genpoint_mod.1} We have to estimate
\begin{equation}\label{genpoint_mod.1eq1}
\ud^j ( R(X)) (\Phi_\e)(\Psi_\e, \dotsc, \Psi_\e).
\end{equation}
Because $X$ is c-bounded it suffices to obtain the estimate for all $\e$ with $X(\Phi_\e) \in L \csub U$ for a chart $(U,\varphi)$ on $M$. Applying the chain rule to
\[ R(X)(\Phi_\e) = (R(\Phi_\e) \circ \varphi^{-1})((\varphi \circ X)(\Phi_\e)) \]
we see that \eqref{genpoint_mod.1eq1} is given by terms of the form
\begin{multline*}
 \ud^l ( \ud^k R (\Phi_\e)(\Psi_\e, \dotsc, \Psi_\e) \circ \varphi^{-1} ) ((\varphi \circ X)(\Phi_\e)) \\
\cdot \ud^{i_1} (\varphi \circ X)(\Phi_\e, \dotsc, \Phi_\e) \cdot \dotsc \cdot \ud^{i_l} (\varphi \circ X)(\Phi_\e, \dotsc, \Phi_\e).
\end{multline*}
From this, the first two claims follow immediately. Now suppose $X \sim_0 Y$. Again, it suffices to obtain the estimate for all $\e$ with $X(\Phi_\e) \in L \csub U$ for a chart $(U,\varphi)$ on $M$. By Definition \ref{def_genpoint} \ref{def_genpoint_neg} \ref{def_genpoint_neg.1} we can choose a neighborhood $L' \csub U$ of $L$ such that $X(\Phi_\e) \in L$ implies $Y(\Phi_\e) \in L'$ for small $\e$. Moreover, we can without limitation of generality assume that $\varphi(L')$ is convex. In this case we can write $\abso{ R(X)(\Phi_\e) - R(Y)(\Phi_\e)}$ as
\begin{multline*}
 \abso{ \int_0^1 \frac{\pd}{\pd t}( R(\Phi_\e) \circ \varphi^{-1})((\varphi \circ Y)(\Phi_\e) + t \cdot (( \varphi \circ X)(\Phi_\e) - (\varphi \circ Y)(\Phi_\e)))\ud t} \\
\le \sup_{x \in \varphi(L')} \norm{ \ud(R(\Phi_\e) \circ \varphi^{-1})(x)} \cdot \norm{ (\varphi \circ X)(\Phi_\e) - (\varphi \circ Y)(\Phi_\e) }
\end{multline*}
from which we see that $R(X) - R(Y) \in \bK^\Delta_\cN$.

\ref{genpoint_mod.5} If $E$ is trivial, $R(X) = (X, (R^i(X))_i)$ gives the claim by \ref{genpoint_mod.1}. In the general case, choose $E'$ such that $E \oplus E'$ is trivial; then $R(X) = p_E ( i_E ( R(X)))$ and because $(i_E \circ R)(X)$ is moderate, $R(X)$ is moderate. For $R - S \in \cN^\Delta(E)$, $R(X) = p_E ( ( i_E \circ R)(X)) \sim p_E ( (i_E \circ S)(X)) = S(X)$ because $i_E \circ R - i_E \circ S \in \cN^\Delta(E \oplus E')$. Finally, $X \sim Y$ implies $R(X) = p_E((i_E \circ R)(X)) \sim p_E ( (i_E \circ R)(Y)) = R(Y)$.

\ref{genpoint_mod.2} $R(X)$ clearly is c-bounded. By Proposition \ref{chargenpt} it is moderate if and only if $f(R(X)) = (f \circ R)(X)$ is moderate for any $f \in C^\infty(N)$, which is the case by \ref{genpoint_mod.1} and Theorem \ref{charmod}, and similarly for equivalence.

\ref{genpoint_mod.4} If $F$ is trivial, $R(X) = (\underline{R}(X), (\widearrow R^i(X))_i)$ is moderate by \ref{genpoint_mod.1} and Lemma \ref{lemmaA}; similarly for equivalence. For non-trivial $F$ choose $F'$ such that $F \oplus F'$ is trivial; as in \ref{genpoint_mod.5}, the claim then follows by writing $R(X) = p_F ( (i_F \circ R)(X))$.

\ref{genpoint_mod.3} If $E$ and $F$ are trivial, the claim follows from $R(X) = ( \underline{R}(\underline{X}), (\sum_j \widearrow R^{ij} X^j)_i)$. In the general case, with $E \oplus E'$ and $F \oplus F'$ trivial, use $R(X) = p_F ( (i_F \circ R \circ p_E)(i_E (X)))$. 
\end{proof}

Consequently, point evaluation is well-defined on the quotients:

\begin{corollary}
The following point evaluation mappings are well-defined:
\begin{align*}
 \cG^\Delta(M) \times \GPq{M} &\to \Gvq{\bK}{\Delta} \\
 \cG^\Delta(E) \times \GPq{M} &\to \GVBq{E} \\
 \GMFq{M}{N} \times \GPq{M} &\to \GPq{N} \\
 \GHYBq{M}{F} \times \GPq{M} &\to \GVBq{F} \\
 \GHOMq{E}{F} \times \GVBq{E} &\to \GVBq{F}
\end{align*}
\end{corollary}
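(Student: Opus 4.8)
The plan is to obtain the corollary directly from Proposition \ref{genpoint_mod}, which already carries all of the analytic content. Recall that a map defined on a product of two quotient spaces by a representative-level formula descends to the quotients precisely when that formula (a)~sends a pair of moderate representatives to a moderate element of the target numerator, and (b)~is compatible with the equivalence relations on both source slots, i.e.\ replacing the representatives by equivalent ones yields an equivalent value. For each of the five point evaluation maps, (a) and (b) are exactly what the corresponding item of Proposition \ref{genpoint_mod} asserts, so the proof amounts to assembling those items and matching up the various equivalence relations.

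In detail: for $\cG^\Delta(M) \times \GPq{M} \to \Gvq{\bK}{\Delta}$ one uses that $R \mapsto R(X)$ is linear in the function slot, so if $R \sim S$ (that is, $R - S \in \cN^\Delta(M)$) and $X \sim Y$ in $\GPm{M}$, then
\[ R(X) - S(Y) = (R-S)(X) + \bigl( S(X) - S(Y) \bigr); \]
the first term is a negligible generalized number because $R - S$ is negligible and $X$ is moderate, the second because $S$ is moderate and $X \sim Y$, both by Proposition \ref{genpoint_mod} \ref{genpoint_mod.1}; moderateness of $R(X)$ is the first assertion of that item. For $\cG^\Delta(E) \times \GPq{M} \to \GVBq{E}$ the combined statement is already packaged in Proposition \ref{genpoint_mod} \ref{genpoint_mod.5}, where one only has to feed in $X \sim Y \Rightarrow X \sim_0 Y$. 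The remaining three maps require no assembly: Proposition \ref{genpoint_mod} \ref{genpoint_mod.2}, \ref{genpoint_mod.4} and \ref{genpoint_mod.3} state verbatim that $R$ equivalent to $S$ and $X$ equivalent to $Y$ (in the appropriate relations) force $R(X)$ to be equivalent to $S(Y)$, and their first assertions give moderateness, including $c$-boundedness of the resulting generalized point. Here one again uses $X \sim Y \Rightarrow X \sim_0 Y$ and, where convenient, the coincidence of $\sim_{vb0}$ with $\sim_{vb}$ and of $\sim_{h0}$ with $\sim_h$ proved in Sections \ref{sec5} and \ref{sec6}.

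Since all of the genuine work was done in Proposition \ref{genpoint_mod} --- whose proof trivializes the bundles and reduces everything to the scalar and vector cases via the chain rule --- I expect no real obstacle. The only thing that needs care is the bookkeeping: keeping track of which equivalence relation lives on each source slot and on the target (in particular that the last three targets carry $\sim_{vb}$ on generalized vector bundle points), splitting a simultaneous change of both representatives into two one-slot changes and closing up by transitivity of the target relation, and noting --- as is verified throughout the section --- independence of the construction from the chosen atlases and trivializing coverings.
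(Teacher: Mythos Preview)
Your proposal is correct and follows exactly the route the paper intends: the corollary is stated in the paper without proof, immediately after Proposition \ref{genpoint_mod}, as a direct consequence of that proposition. Your write-up is simply a careful unpacking of why each item of Proposition \ref{genpoint_mod} yields well-definedness on the respective quotient, including the one-slot splitting and transitivity argument needed for the linear cases; nothing beyond this is required, and the paper does not supply anything further either.

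One minor remark on bookkeeping: you write that ``the last three targets carry $\sim_{vb}$ on generalized vector bundle points'', but the third map has target $\GPq{N}$, i.e.\ generalized points of a manifold with the relation $\sim$ of Definition \ref{def_genpoint}. The paper's Proposition \ref{genpoint_mod} \ref{genpoint_mod.2} also contains a typo (``$R(X) \sim_{vb} S(Y)$'' where $\sim$ is meant), so this is harmless, but be aware of it when writing up.
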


\section{Point value characterizations}\label{sec9}

In order to characterize a generalized function by its point values, the following auxiliary result (adapted from \cite{punktwerte}) allows to construct generalized points with specific properties. However, we need to establish some terminology for its formulation. First, in the local case (where $U \subseteq \bR^n$ is an open set) the topological vector space isomorphism $\cL ( \cD'(U); C^\infty(U)) \cong C^\infty(U, \cD(U))$ establishes a correspondence between smoothing operators $\Phi$ and smoothing kernels $\vec\varphi$; the conditions (VSO1-4,2',4') transform accordingly (cf.~\cite{bigone} for details), which gives an equivalent formulation of test objects in terms of smoothing kernels. The construction in the next lemma gives a generalized point defined in the local setting which can be pulled back to the manifold along a chart; there, the point of the corresponding submanifold of $M$ can be seen as a point on $M$ by pulling it back along the ``restriction mapping'' $\rSO_{U,M}$ of smoothing operators (see \cite[Theorem 5.6, p.~198]{bigone}), which maps test objects on $M$ to test objects on $U$.

For an open subset $U \subseteq \bR^n$, $S(U)$ and $S^0(U)$ denote the spaces of nets $(\vec\varphi_\e)_\e \in \SK{U}^I$ of smoothing kernels such that the corresponding nets of smoothing operators satisfy (VSO1)--(VSO4) or (VSO1,2',3,4'), respectively, of Section \ref{sec3} with $E = U \times \bK$ (cf.\ also \cite{papernew}). These are exactly the test objects used in the local scalar theory.

\begin{lemma}\label{localpoint}
Let $U \subseteq \bR^n$ be open, $(\vec\varphi_\e)_\e \in SK(U)^I$, $K \subseteq U$ open with $\overline{K} \subseteq U$ compact, and suppose we are given sequences $(\e_k)_k$ in $I$ and $(x_k)_k$ in $K$ such that $(\e_k)_k \to 0$. Then there exists $X \in C^\infty( SK (U), U)$ such that $X (\vec\varphi_{\e_k}) = x_k$ for infinitely many $k$ and $X$ is moderate in the sense that $(\forall j \in \bN_0)$ $(\forall (\vec\varphi_\e)_\e \in S(U))$ $(\forall (\vec\psi_{1,\e})_\e, \dotsc, (\vec\psi_{j,\e})_\e \in S^0(U))$ $(\exists N \in \bN)$: $\norm{(\ud^j X)(\vec\varphi_\e)(\vec\psi_{1,\e},\dotsc,\vec\psi_{j,\e})} = O(\e^{-N})$.
\end{lemma}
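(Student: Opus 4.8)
The plan is to realize $X$ as a smooth ``partition of unity'' on $SK(U)$: a convex combination of the target points $x_k$, weighted by bump functions localized around the kernels $\vec\varphi_{\e_k}$.

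First I would carry out a chain of harmless reductions, harmless because only infinitely many of the equalities $X(\vec\varphi_{\e_k}) = x_k$ are demanded. Since $\overline K$ is compact, after passing to a subsequence I may assume $x_k \to x_\infty \in \overline K$; then I fix a convex open ball $B$ with $x_\infty \in B$ and $\overline B \subseteq U$ and discard the finitely many indices with $x_k \notin B$, so that $x_k \in B$ for all $k$. Next, the defining concentration property of smoothing kernels makes the net $(\vec\varphi_{\e_k})_k$ unbounded in $C^\infty(U,\cD(U))$ (of which $SK(U)$ is a subset): concretely $\norm{\vec\varphi_\e(x_0)}_{\infty} \to \infty$ for any fixed $x_0 \in K$, since $\vec\varphi_\e(x_0) \to \delta_{x_0}$ in $\cD'(U)$. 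Choosing a continuous seminorm $q$ on $C^\infty(U,\cD(U))$ with $q(\vec\varphi_{\e_k}) \to \infty$ and passing to a further subsequence with $q(\vec\varphi_{\e_{k+1}}) \ge q(\vec\varphi_{\e_k}) + 3$, the points $\vec\varphi_{\e_k}$ become pairwise $q$-separated by at least $3$, with $q(\vec\varphi_{\e_k}) \to \infty$.

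The crucial step is to produce the bump functions. Adapting the constructions of \cite{punktwerte} — which rest on the smooth regularity of spaces such as $\cD(U)$ — I would obtain a single $b_0 \in C^\infty(C^\infty(U,\cD(U)),[0,1])$ which is $\equiv 1$ on the $q$-ball of radius $1/2$ about the origin, has $\supp b_0$ contained in the $q$-ball of radius $1$, and has differentials bounded \emph{uniformly in the base point}: for each $j$ there are a continuous seminorm $\tilde q_j$ and a constant $C_j$ with $\norm{\ud^j b_0(\zeta)(w_1,\dots,w_j)} \le C_j \prod_{i=1}^{j} \tilde q_j(w_i)$ for all $\zeta, w_1, \dots, w_j$. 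Setting $b_k(\psi) \coleq b_0(\psi - \vec\varphi_{\e_k})$ and restricting to $SK(U)$, the $b_k$ are smooth, have pairwise disjoint supports, satisfy $b_k(\vec\varphi_{\e_j}) = 1$ if $j=k$ and $0$ otherwise (by the $q$-separation), and inherit the same uniform derivative bounds. Then I would put
\[ X \coleq x_\infty + \sum_{k} b_k \cdot (x_k - x_\infty). \]
As every $q$-bounded set — in particular the image of every smooth curve — meets only finitely many $\supp b_k$ (because $q(\vec\varphi_{\e_k}) \to \infty$), the sum is locally finite and $X$ is smooth; since at most one summand is nonzero at any point and $0 \le b_k \le 1$, each value $X(\psi)$ is a convex combination of $x_\infty$ with some $x_k$, hence lies in $B \subseteq U$; and $X(\vec\varphi_{\e_k}) = x_\infty + (x_k - x_\infty) = x_k$. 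For moderateness, let $(\vec\varphi_\e)_\e \in S(U)$ and $(\vec\psi_{1,\e})_\e, \dots, (\vec\psi_{j,\e})_\e \in S^0(U)$: for $j = 0$, $X$ being $B$-valued suffices; for $j \ge 1$, disjointness of supports leaves at most one term in $\ud^j X(\vec\varphi_\e)(\vec\psi_{1,\e},\dots,\vec\psi_{j,\e}) = \sum_k (x_k - x_\infty)\, \ud^j b_k(\vec\varphi_\e)(\vec\psi_{1,\e},\dots,\vec\psi_{j,\e})$, and that term is bounded by $\mathrm{diam}(B) \cdot C_j \prod_{i=1}^{j} \tilde q_j(\vec\psi_{i,\e})$, which is $O(\e^{-N})$ by (VSO3) applied to the $0$-test objects and the seminorm $\tilde q_j$ (a continuous seminorm on $\cL(\cD'(U), C^\infty(U)) = \VSO{U \times \bK}$).

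The main obstacle is the existence of the bump function $b_0$ with derivatives controlled uniformly in the base point: this is genuine infinite-dimensional analysis on the space of smoothing kernels and cannot be reduced to finite-dimensional bump-function arguments — it is precisely the technical heart imported from \cite{punktwerte}. A secondary point that needs some care is extracting the unboundedness and uniform separation of the net $(\vec\varphi_{\e_k})_k$ from the structural axioms for smoothing kernels.
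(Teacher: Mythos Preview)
Your proposal is correct and takes essentially the same approach as the paper: both construct $X$ as a convex combination of the target points weighted by translated bump functions with uniformly bounded derivatives, with the bump-function technology imported from \cite{punktwerte}. The only notable difference is that the paper first composes with the evaluation $\vec\varphi \mapsto \lambda \cdot \vec\varphi(z_0)$ to reduce from $SK(U)$ to the nuclear Fr\'echet space $\cD_L(U)$, and there writes the bump function explicitly as $g \circ h$ for a Hilbertian quadratic form $h$ obtained from nuclearity---so the uniform derivative bounds you invoke abstractly come out concretely from $h$ being a degree-$2$ polynomial.
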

\begin{proof}
Let $z_0 \in U$ and $\eta>0$ arbitrary. Choose $r>0$ such that $\overline{B_r}(z_0) \subseteq U$. By choosing suitable subsequences we can assume that $(\e_k)_k$ is strictly decreasing, the convex hull of the set $\{ x_k: k \in \bN \}$ is contained in $U$, $\supp \vec\varphi_{\e_k}(z_0) \subseteq B_{r/2}(z_0)$ and $\norm{\vec\varphi_{\e_{k+1}}(z_0) }_\infty > \norm{\vec\varphi_{\e_k}}_{\infty} + 2 \eta$ for all $k \in \bN$, where $\norm{\cdot}_{\infty}$ denotes the supremum norm.

With $L \coleq \overline{B_r}(z_0)$, $\cD_L(U) \coleq \{ \varphi \in \cD(U):\supp \varphi \subseteq L\}$ is nuclear (\cite[Chapter III, \S 8.1, p.~106]{Schaefer}) and $W \coleq \{ \varphi \in \cD_L(U)\ |\ \norm{\varphi}_\infty \le \eta \}$ is a $0$-neighborhood in this space. By \cite[Chapter III, \S 7.3, p.~102]{Schaefer} there is a $0$-neighborhood $V \subseteq W$ whose gauge function is given by $p(\varphi) \coleq \sqrt{ \sigma(\varphi,\varphi) }$, where $\sigma$ is a positive semi-definite Hermitian form on $\cD_L(U)$. By the Cauchy-Schwartz inequality, $\abso{\sigma(\varphi,\psi)} \le p(\varphi)p(\psi)$ and hence $\sigma$ as well as its corresponding quadratic form $h(\varphi) \coleq \sigma (\varphi,\varphi)$ are smooth. 

Choose any $g \in C^\infty(\bR, [0,1])$ such that $g(x) = 1$ for $x \le 0$ and $g(x)=0$ for $x \ge 1$ and set $\chi \coleq g \circ h \in C^\infty(\cD_L(U), \bR)$. Take $\lambda \in C^\infty(U)$ with $\supp \lambda \subseteq L$ and $\lambda \equiv 1$ on $B_{r/2}(z_0)$. Then define $X \in C^\infty(\SK{U}, U)$ by
\[ X(\vec\varphi) \coleq x_1 + \sum_k \chi ( \lambda \cdot \vec\varphi(z_0) - \vec\varphi_{\e_k}(z_0)) \cdot (x_k - x_1). \]
Because $\chi^{-1}(0) \subseteq W$ the summands have pairwise disjoint carriers and $X(\vec\varphi_{\e_k}) = x_k$ for all $k$. For moderateness we have to show that $\forall j \in \bN_0$ $\forall (\vec\varphi_\e)_\e \in S(U)$, $(\vec\psi_\e)_\e \in S^0(U)$ $\exists N \in \bN$: $\norm{(\ud^j X)(\vec\varphi_\e)(\vec\psi_\e, \dotsc, \vec\psi_\e)} = O(\e^{-N})$. For each $\e \in I$ at most one term $\chi(\lambda \cdot \vec\varphi_\e(z_0) - \vec\varphi_{\e_k}(z_0))$ is nonzero. In this case, we know that $h(\lambda \cdot \vec\varphi_\e(z_0) - \vec\varphi_{\e_k}(z_0)) < 1$ and thus $\norm{\lambda \cdot \vec\varphi_\e(z_0) - \vec\varphi_{\e_k}(z_0)}_\infty < \eta$, hence $\norm{\lambda \cdot (\vec\varphi_\e + t \vec\psi_\e) - \vec\varphi_{\e_k}(z_0)}_\infty < \eta$ for $t$ sufficiently small. From this it follows that
\begin{multline*}
 (\ud^j X)(\vec\varphi_\e)(\vec\psi_\e, \dotsc, \vec\psi_\e) = \left.\left(\frac{\pd}{\pd t}\right)^j\right|_{t=0} X(\vec\varphi_\e + t\vec\psi_\e) \\
= \left.\left(\frac{\pd}{\pd t}\right)^j\right|_{t=0} \chi(\lambda \cdot (\vec\varphi_\e + t \vec\psi_\e)(z_0) - \vec\varphi_{\e_k}(z_0))(x_k-x_1) \\
= \ud^j [ \vec\varphi \mapsto (g \circ h)(\lambda \cdot \vec\varphi(z_0) - \vec\varphi_{\e_k}(z_0)) \cdot (x_k-x_1)](\vec\varphi_\e)(\vec\psi_\e, \dotsc, \vec\psi_\e).
\end{multline*}
Applying the chain rule we obtain derivatives of $g$, which are bounded independently of $\e$, and terms of the form
\begin{equation}\label{asfldfh}
 \ud^l h ( \lambda \cdot \vec\varphi(z_0) - \vec\varphi_{\e_k}(z_0)) (\vec\varphi_\e(z_0)) (\lambda \cdot \vec \psi_\e(z_0), \dotsc, \lambda \cdot \vec\psi_\e(z_0)).
\end{equation}
For $l \ge 3$ this vanishes because $h$ is a polynomial of degree 2. For the other cases we have $h(\varphi) = \sigma(\varphi, \varphi)$, $\ud h (\varphi)(\psi) = \sigma (\varphi, \psi) + \sigma( \psi, \varphi)$ and $\ud^2 h(\varphi)(\psi_1, \psi_2) = \sigma(\psi_1,\psi_2) + \sigma(\psi_2, \psi_1)$. By the Cauchy-Schwartz inequality, \eqref{asfldfh} hence can be estimated by terms which are either of the form $p(\lambda \cdot \vec\varphi(z_0) - \vec\varphi_{\e_k}(z_0))$, which is $<1$ as noted above, or of the form $p(\lambda \cdot \vec\psi_\e(z_0))$, which is $O(\e^{-N})$ for some $N \in \bN$ because $\vec\psi \mapsto p(\lambda \cdot \vec\psi(z_0))$ is a continuous seminorm on $\SK{U}$ and $(\vec\psi_\e)_\e \in S^0(U)$. Alltogether, this gives moderateness of $X$.
\end{proof}

\begin{theorem}\label{pvchar}
\begin{enumerate}[label=(\alph*)]

\item\label{pvchar.1} If $R \in \cG^\Delta(M)$, $R = 0$ if and only if $R(X) = 0$ for all $X \in \GPq{M}$.
\item\label{pvchar.2} If $R,S \in \cG^\Delta(E)$, $R=S$ if and only if $R(X)=S(X)$ for all $X \in \GPq{M}$.
\item\label{pvchar.3} If $R,S \in \GMFq{M}{N}$, $R=S$ if and only if $R(X) = S(X)$ for all $X \in \GPq{M}$.
\item\label{pvchar.4} If $R,S \in \GHYBq{M}{F}$, $R=S$ if and only if $R(X) = S(X)$ for all $X \in \GPq{M}$.
\item\label{pvchar.5} If $R,S \in \GHOMq{E}{F}$, $R=S$ if and only if $R(X) = S(X)$ for all $X \in \GVBq{E}$.
\end{enumerate}
\end{theorem}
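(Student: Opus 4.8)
The plan is to prove \ref{pvchar.1} directly and then bootstrap \ref{pvchar.2}--\ref{pvchar.5} from it, reducing to trivial bundles and invoking the characterizations of moderateness and equivalence by composition from Sections \ref{sec4} and \ref{sec5}. Throughout, the implication ``$R=S$ (resp.\ $R=0$) $\Rightarrow R(X)=S(X)$ (resp.\ $R(X)=0$) for all $X$'' is nothing but the well-definedness of point evaluation on the quotients, i.e.\ the corollary following Proposition \ref{genpoint_mod}, so only the converse needs an argument in each case.

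For \ref{pvchar.1} I would argue by contraposition, adapting the technique of \cite{punktwerte}. Assume a representative $R\in\cE^\Delta_\cM(M)$ of $\cG^\Delta(M)=\cG^\Delta(M\times\bK)$ is not negligible; then by Theorem \ref{nonegder} there are $K\csub M$, $m\in\bN$ and $(\Phi_\e)_\e\in S(\Delta)$ with $\sup_{x\in K}\norm{R(\Phi_\e)(x)}$ not $O(\e^m)$. Covering $K$ by finitely many charts I may assume $K\csub U$ for a single chart $(U,\varphi)$, and I pick $\e_k\searrow 0$ and $x_k\in K$ with $\norm{R(\Phi_{\e_k})(x_k)}$ not $O(\e_k^m)$. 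Transporting $(\Phi_\e)_\e$ through $\varphi$ and the restriction-of-test-objects construction of \cite[Section~5]{bigone}, together with the smoothing operator/smoothing kernel correspondence, produces a net $(\vec\varphi_\e)_\e\in SK(\varphi(U))^I$; feeding this net, the points $\varphi(x_k)$ and the sequence $\e_k$ into Lemma \ref{localpoint} yields a moderate $X_0\in C^\infty(SK(\varphi(U)),\varphi(U))$ with $X_0(\vec\varphi_{\e_k})=\varphi(x_k)$ for infinitely many $k$. Pulling $X_0$ back along $\varphi$ and precomposing with the corresponding map $\VSO\Delta\to SK(\varphi(U))$ (which is smooth, essentially linear, and carries $S(\Delta)$ into $S(\varphi(U))$ and $S^0(\Delta)$ into $S^0(\varphi(U))$) gives $X\in M^\Delta$ with $X(\Phi_{\e_k})=x_k$ for infinitely many $k$; $X$ is c-bounded because $X_0$ has values in a fixed compact convex subset of $\varphi(U)$, and moderate by the moderateness clause of Lemma \ref{localpoint} together with the chain rule. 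By Proposition \ref{genpoint_mod} \ref{genpoint_mod.1} one gets $R(X)\in\Gvm{\bK}{\Delta}$, and $R(X)(\Phi_{\e_k})=R(\Phi_{\e_k})(x_k)$ fails to be $O(\e_k^m)$, so $R(X)$ is not negligible by Lemma \ref{nonegderpoints}, i.e.\ $R(X)\neq 0$. By linearity this also gives ``$R=S$ iff $R(X)=S(X)$ for all $X$'' for $R,S\in\cG^\Delta(M)$.

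For \ref{pvchar.2}--\ref{pvchar.5} I would reduce to trivial bundles. Choosing auxiliary bundles so that the relevant direct sums are trivial and using the canonical injections/projections together with Lemma \ref{smoothcomp}, Lemma \ref{lemmaB}, Proposition \ref{fctcomp} and Proposition \ref{smoothpointev}, exactly as in Proposition \ref{homcomp} and Proposition \ref{genpoint_mod} \ref{genpoint_mod.5}, transfers each statement to the trivial case; this works because $p\circ(\cdot)\circ i=\id$ for the relevant $i,p$ and composition with $i$ resp.\ $p$ preserves moderateness, negligibility and the equivalence relations in both directions. In the trivial case: for \ref{pvchar.2} write $R=(R^i)_i$ with $R^i\in\cG^\Delta(M)$, note $R(X)=(X,(R^i(X))_i)$, and apply \ref{pvchar.1} componentwise. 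For \ref{pvchar.3} use Theorem \ref{charneg} to rewrite $R=S$ as ``$f\circ R=f\circ S$ in $\cG^\Delta(M)$ for all $f\in C^\infty(N,\bR)$'' ($f\circ R$ moderate by Theorem \ref{charmod}), then \ref{pvchar.2} to rewrite this as ``$f(R(X))=f(S(X))$ for all such $f$ and all $X$'' via $(f\circ R)(X)=f(R(X))$, and finally Proposition \ref{chargenpt} on $N$ (with $R(X),S(X)$ moderate by Proposition \ref{genpoint_mod} \ref{genpoint_mod.2}) to identify this, for each fixed $X$, with $R(X)=S(X)$ in $\GPq{N}$. For \ref{pvchar.4} write $R=(\underline R,(\widearrow R^i)_i)$ via Lemma \ref{lemmaA}, so $R(X)=(\underline R(X),(\widearrow R^i(X))_i)$, and combine \ref{pvchar.3} for the base with \ref{pvchar.1} for the components to recover $R\sim_h S$ through Lemma \ref{lemmaA}. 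For \ref{pvchar.5} write $R=(\underline R,(\widearrow R^{ij})_{ij})$ via Lemma \ref{koord}; evaluating $R$ and $S$ on generalized vector bundle points with arbitrary base in $\GPq{M}$ and zero vector part isolates the base map ($\Rightarrow\underline R=\underline S$ by \ref{pvchar.3}), while evaluating on points with arbitrary base and constant vector part equal to a fixed basis vector of the fiber isolates the coefficients ($\Rightarrow\widearrow R^{ij}=\widearrow S^{ij}$ by \ref{pvchar.1}), whence $R\sim_{vb}S$ by Lemma \ref{koord}.

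The step I expect to be the main obstacle is the construction in \ref{pvchar.1}: one has to check carefully that the passage from test objects on $M$ to smoothing kernels on $\varphi(U)$ --- via the chart and the restriction mechanism of \cite{bigone} --- is smooth, essentially linear, and maps $S(\Delta)$ into $S(\varphi(U))$ and $S^0(\Delta)$ into $S^0(\varphi(U))$, so that Lemma \ref{localpoint} genuinely delivers a moderate, c-bounded generalized point on $M$. Once \ref{pvchar.1} is in place, \ref{pvchar.2}--\ref{pvchar.5} are routine bookkeeping, combining the trivial-bundle reductions with the scalar point-value characterization and the earlier composition and smooth-function characterizations.
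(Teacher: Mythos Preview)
Your proposal is correct and follows essentially the same route as the paper. The one place where the paper is more concrete than your outline is exactly the step you flag as the main obstacle in \ref{pvchar.1}: rather than speaking abstractly of a map $\VSO\Delta\to SK(\varphi(U))$, the paper fixes some $G\in\Delta$, takes the chart $(U,\varphi)$ so that $G$ is trivial over $U$, and then uses $\Phi\mapsto(\rho^{SO}_{U,M}\Phi_G)^{11}$, the $(1,1)$-entry of the matrix of the restricted smoothing operator in a fixed local frame of $G$; this map is linear and continuous (hence smooth) and sends $S(\Delta)$, $S^0(\Delta)$ into the local test-object spaces, which is precisely what Lemma~\ref{localpoint} needs. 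Your reductions for \ref{pvchar.2}--\ref{pvchar.5} via trivial direct-sum complements and the componentwise descriptions from Lemmas~\ref{koord} and~\ref{lemmaA} coincide with the paper's; your slightly more explicit choice of test vectors in \ref{pvchar.5} (zero vector part, then standard basis vectors) is just a spelling-out of what the paper does in one line.
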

\begin{proof}
\ref{pvchar.1} If $R=0$ then $R(X)=$ for all $X \in \widetilde M_c^\Delta$ by Proposition \ref{genpoint_mod}. Conversely, for each $R \in \Gm{M}{\Delta} \setminus \Gn{M}{\Delta}$ we can find a compact subset $K \csub M$, $m \in \bN_0$, $(\Phi_\e)_\e \in S(\Delta)$, a sequence $(\e_k)_k$ in $I$ with $\e_k < 1/k$ and a sequence $(x_k)_k$ in $K$ such that $\abso{R(\Phi_{\e_k})(x_k)} > \e_k^m$.

Pick any $G \in \Delta$ and, by choosing a suitable subsequence of $(x_k)_k$, choose a chart $(U, \varphi)$ of $M$ such that $G$ is trivial over $U$ and we can assume $K \subseteq U$. With respect to a fixed basis of $\Gamma(U,G)$, $\rSO_{U,M} \Phi_{G, \e}$ is given by a matrix $((\rSO_{U,M} \Phi_{G, \e})^{ij})_{ij}$ with entries in $\SO{U}$ (cf.~\cite[Section 5]{bigone} for the definition of $\rSO_{U,M}$ and the coordinate representation of test objects).
By Lemma \ref{localpoint} there exists a moderate generalized point $\widetilde X \in C^\infty(\SO{U}, U)$ such that $\widetilde X( (\rSO_{U,M} \Phi_{G,\e_k})^{11} ) = x_k$ for infinitely many $k$. Define $X \in M^\Delta_c$ by $X(\Phi) \coleq \widetilde X ( (\rSO_{U,M} \Phi_G )^{11} )$ for $\Phi \in \VSO{\Delta}$. Then $X$ is moderate and $X(\Phi_{\e_k} ) = x_k$ for infinitely many $k$ by construction. Because $R(X)(\Phi_{\e_k}) = R(\Phi_{\e_k})(x_k)$ for these $k$ it follows from the assumption that $R(X)$ cannot be negligible.

\ref{pvchar.2} If $E$ is trivial, $R(X) = (X, ( \overrightarrow{ R(X) }^i)_i)$ and $S(X) = (X, ( \overrightarrow{ S(X) }^i)_i)$ are equal if and only if all $\overrightarrow{ R(X) }^i = R^i(X)$ and $\overrightarrow{ S(X) }^i = S^i(X)$ are equal, which gives the claim by \ref{pvchar.1}. For nontrivial $E$ choose $E'$ such that $E \oplus E'$ is trivial; then $R(X) = S(X)$ for all $X \in \GVBq{E}$ is equivalent to $(i_E R)(X,Y) = (i_E S)(X,Y)$ for all $(X,Y) \in \GVBq{E \oplus E'}$, which in turn is equivalent to $i_E R = i_E S$ by \ref{pvchar.1} and hence $R=S$.

\ref{pvchar.3} $R(X) = S(X)$ for all $X \in \GPq{M}$ is equivalent to $f(R(X)) = f(S(X))$ for all $X \in \GPq{M}$ and $f \in C^\infty(N)$, which by Proposition \ref{chargenpt} is equivalent to $f \circ R = f \circ S$ for all $f \in C^\infty(N)$ and hence to $R=S$ by Theorem \ref{charneg}.

\ref{pvchar.4} Supposing first that $F$ is trivial, $R(X) = ( \underline{R}(X), (\widearrow R^i(X))_i)$ and $S(X) = ( \underline{S}(X), (\widearrow S^i(X))_i)$ are equal if and only if $\underline{R}(X) = \underline{S}(X)$ and $\widearrow R^i(X) = \widearrow S^i(X)$ for all $i$, which implies the claim. For general $F$, choose $F'$ such that $F \oplus F'$ is trivial. Then $R(X) = S(X)$ $\forall X$ implies $(i_F \circ R)(X) = (i_F \circ S)(x)$ $\forall X$, hence $i_F \circ R = i_F \circ S$ and thus $R = S$.

\ref{pvchar.5} If $E,F$ are trivial then $R(X)=S(X)$ if and only if $(\underline{R} ( \underline{X} ), (\widearrow {R(X)}^i)_i )$ and $(\underline{R} ( \underline{X} ), ( \widearrow { S(X)}^i )_i)$ are equal, which is the case if we have $\underline{R} ( \underline{X} ) = \underline{S} ( \underline{X} )$ as well as equality of $ \widearrow { R(X) }^i = \sum_j \widearrow R^{ij} X^j$ and $\widearrow { S(X) }^i = \sum_j \widearrow S^{ij} X^j$. If this is the case for all $X$, then $\underline R = \underline S$ and $\widearrow R^{ij} = \widearrow S^{ij}$ for all $i,j$ by \ref{pvchar.1} and \ref{pvchar.3}, which implies $R=S$. For arbitary $E,F$ choose $E'$ and $F'$ such that $E \oplus E'$ and $F \oplus F'$ are trivial. Then $R(X)=S(X)$ for all $X$ is equivalent to $(i_F \circ R \circ p_E)(X,Y) = (i_F \circ S \circ p_E)(X,Y)$ for all $(X,Y) \in \GPq{E \oplus E'}$, which in turn is equivalent to $i_F \circ R \circ p_E = i_F \circ S \circ p_E$ and hence $R = S$.
\end{proof}

{\bfseries Acknowledgments.} This work was supported by the Austrian Science Fund (FWF) projects P23714 and P26859.

\end{document}